\documentclass[11pt]{amsart}

\usepackage{color}

\usepackage{tikz-cd}
\usepackage[all]{xy}
 \usepackage[latin1]{inputenc} 
\usepackage[english]{babel}

\usepackage[T1]{fontenc}
 \usepackage{lmodern}
\usepackage{mathrsfs}
\usepackage{graphicx,epsfig,amscd,graphics,xypic}
 \usepackage[all]{xy}
\usepackage{amsmath,amssymb}

\usepackage[margin=4cm]{geometry}

\usepackage{hyperref}
\hypersetup{
    bookmarks=true,         
    unicode=false,          
    pdftoolbar=true,        
    pdfmenubar=true,        
    pdffitwindow=false,     
    pdfstartview={FitH},    
    pdftitle={My title},    
    pdfauthor={Author},     
    pdfsubject={Subject},   
    pdfcreator={Creator},   
    pdfproducer={Producer}, 
    pdfkeywords={keyword1} {key2} {key3}, 
    pdfnewwindow=true,      
    colorlinks=false,       
    linkcolor=red,          
    citecolor=green,        
    filecolor=magenta,      
    urlcolor=cyan           
}

\newcommand{\A}{\mathcal{A}}
\newcommand{\sk}{\smallskip}

\newcommand{\Aff}{\mathrm{Aff}(\mathbb{C})}
\newcommand{\C}{\mathbb{C}}
\newcommand{\Z}{\mathbb{Z}}
\newcommand{\R}{\mathbb{R}}
\newcommand{\U}{\mathbb{U}}
\newcommand{\dev}{\mathrm{dev}}
\newcommand{\Ht}{\mathrm{H}^1_{\alpha}(\Gamma,\mathbb{C}) }
\newcommand{\Hl}{\mathrm{H}^1( \Sigma, \mathbb{C}^*) }
\newcommand{\Li}{\mathrm{Li}}
\newcommand{\Tr}{\mathrm{Tr}}
\newcommand{\Mod}{\mathrm{Mod}(\Sigma)}
\newcommand{\car}{ \chi(\Gamma, \Aff)}
\newcommand{\Sp}{ \mathrm{Sp}(2g, \mathbb{Z})}
\newcommand{\Sr}{ \mathrm{Sp}(2g, \mathbb{R})}
\newcommand{\tor}{ \mathcal{I}(\Sigma)}
\newcommand{\Stab}{\mathrm{Stab}}




\newtheorem{thm}{Theorem}[section]

\newtheorem*{thm*}{Theorem}   
   
\newtheorem*{lemma*}{Lemma}

\newtheorem{prop}[thm]{Proposition}
\newtheorem{defi}{Definition}           
 
\newtheorem{lemma}[thm]{Lemma}
\newtheorem*{rem}{Remark}

\newtheorem*{ex*}{Example \ref{exJ2} (continued)}


\title[]{Mapping class group dynamics and the holonomy of branched affine structures.}

\author[Selim Ghazouani]{Ghazouani Selim}
\address{Département de Mathématiques et Applications, ENS Ulm, 45 rue d'Ulm 75 005 Paris {\it E-mail : selim.ghazouani@gmail.com} }

\begin{document}

\maketitle

\begin{abstract}

We classify, up to few exceptions, the orbit closures of the $\Mod$-action on the affine character variety $\chi( \Aff)$. We obtain from this classification that the only obstruction for a non-abelian  representation $\rho : \pi_1 \Sigma \longrightarrow \Aff$ to be the holonomy of a branched affine structure on $\Sigma$ is to be  Euclidean and not to have positive volume, where $\Sigma$ is a closed oriented surface of genus $g \geq 2$.

\end{abstract}

\section{Introduction.}

We introduce several notations that we are going to use throughout the text:

\begin{itemize}

\item $\Sigma$ is a closed oriented surface of genus $g \geq 2$.

\item $\Gamma$ is the fundamental group of $\Sigma$.

\item $\mathrm{Mod}(\Sigma)$ is the mapping class group of $\Sigma$.

\item $\mathrm{Aff}(\mathbb{C})$ is the complex affine group of dimension $1$.

\item $\chi(\Gamma, \Aff) = \chi$ is the character variety.

\end{itemize}

\subsection{Translation surfaces.}
A nowadays very popular topic, at the crossroads of dynamics, algebraic geometry and metric geometry, are translations surfaces. These are the structures whose local charts are given by locally integrating an holomorphic $1$-form on a Riemann surface. The periods of the associated $1$-form are geometric invariants of the structure, and they can be thought of as an element of

$$ \mathrm{Hom}(\mathrm{H}_1(\Sigma, \Z), \C) \simeq \mathrm{H}^1(\Sigma, \C). $$

\noindent In the language of geometric structures, this period morphism is the \textit{holonomy} of the translation structure. A very nice theorem, although relatively unknown, characterises the elements of $\mathrm{H}^1(\Sigma, \C) $ which can arise as the holonomy of a translation surface.

\begin{thm}[Haupt, \cite{Haupt}]
\label{Haupt}
A morphism $p \in \mathrm{H}^1(\Sigma, \C)$ can be realised as the holonomy of a translation surface if and only if the two following conditions hold:

\begin{enumerate}

\item the volume of $p$ is positive;

\item if the image of $p$ is a lattice $\Lambda$ in $\C$, then $\mathrm{vol}(p) > \mathrm{vol}(\C / \Lambda)$.

\end{enumerate}

\end{thm}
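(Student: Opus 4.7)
My plan is to prove Haupt's theorem in two directions, with the difficult direction relying on a combination of an openness argument, an MCG-invariance observation, and a classification of orbit closures of the $\Mod$-action on $\mathrm{H}^1(\Sigma,\C)$.

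The necessity of the conditions is straightforward. If $(X,\omega)$ is a translation surface with period $p$, then
$$ \mathrm{vol}(p) = \tfrac{i}{2}\int_X \omega \wedge \bar\omega > 0,$$
so (1) holds. If moreover the image of $p$ lies in a lattice $\Lambda \subset \C$, then integrating $\omega$ from a base point yields a well-defined holomorphic map $\pi : X \to \C/\Lambda$, which is a branched cover of some degree $d \geq 1$, and one computes $\mathrm{vol}(p) = d \cdot \mathrm{vol}(\C/\Lambda)$. Since $g(X) \geq 2$ while $g(\C/\Lambda) = 1$, the Riemann--Hurwitz formula forbids $d = 1$, so $d \geq 2$, yielding (2).

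For the sufficiency, the core strategy is as follows. Let $\mathcal{R} \subset \mathrm{H}^1(\Sigma,\C)$ denote the set of period classes realized by some translation surface. First I would establish two structural properties of $\mathcal{R}$: (a) it is open, by a Schiffer-type surgery argument that allows local deformation of periods of any translation surface while remaining translation; and (b) it is invariant under the natural $\Mod$-action on $\mathrm{H}^1(\Sigma,\C)$, because precomposing the holonomy by a mapping class amounts to a change of marking. Combining (a) and (b), $\mathcal{R}$ contains, together with any point, the closure of its entire $\Mod$-orbit.

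The next step is to describe the $\Mod$-orbit closures in $\mathrm{H}^1(\Sigma,\C)$ inside the Haupt locus. Since $\Mod$ acts through $\Sp$ on $\C^{2g}$, one can stratify by the $\Q$-rank of the image of $p$: at one extreme, classes whose image has real rank $\geq 3$ have dense orbit in the level sets of $\mathrm{vol}$, by Ratner-type (or direct $\Sp$-equidistribution) results; at the other extreme, classes with image a lattice $\Lambda$ form a closed, identifiable stratum. I would handle each stratum by producing at least one explicit realization: generic rank classes can be realized by gluing a marked $4g$-gon with prescribed edge vectors, while lattice-valued classes satisfying (2) can be realized as branched covers of $\C/\Lambda$ of appropriate degree, using the strict inequality to guarantee enough degree for the higher genus. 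Openness and $\Mod$-invariance then propagate realizability to the whole orbit closure, which by the classification exhausts the Haupt locus.

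The main obstacle will be the classification of $\Mod$-orbit closures with sufficient precision at the boundary between the two regimes, in particular for classes whose image is an intermediate subgroup of $\C$ (rank two but not a lattice, rank one, etc.); handling these intermediate cases, and making sure the explicit constructions cover all the relevant orbit closures once a realized representative is found, is where the delicate work lies.
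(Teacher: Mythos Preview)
The paper does not actually prove Theorem~\ref{Haupt}: it is stated as a classical result of Haupt with a citation, and in the introduction the author mentions that Kapovich gave an alternative proof ``completely based on the analysis of mapping class group action,'' via Ratner's theorem applied to the $\Sp$-action. So there is no proof in the paper against which to compare your argument.

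That said, your proposal is precisely the Kapovich strategy alluded to in the introduction (and expanded in the cited note \cite{Kapovich} and the survey \cite{Deroin}): show that the set of realizable periods is open and $\Mod$-invariant, classify $\Sp$-orbit closures in $\mathrm{H}^1(\Sigma,\C)$ via Ratner, and exhibit one realized representative in each orbit closure. Your necessity argument is correct, and your outline of sufficiency is on the right track. The paper itself carries out exactly this program, but for the affine character variety rather than for $\mathrm{H}^1(\Sigma,\C)$; Section~\ref{modlinear} contains the relevant $\Sp$-orbit-closure classification you would need, and the lattice case you flag as delicate is handled there (and in \cite[Proposition 2.7]{Deroin}) by the invariant $\mu\wedge\theta$ and an explicit normal form. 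If you want to turn your plan into a proof, those references contain the missing pieces.
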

  
The volume $\mathrm{vol}$ of an element $p \in \mathrm{H}^1(\Sigma, \C)$ is the intersection product $\Re(p) \cdot \Im(p)$. The terminology 'volume' comes from the fact that a translation surface whose period is $p$ has volume equal to $\Re(p) \cdot \Im(p)$. This remark makes the first condition in the Theorem \ref{Haupt} obviously necessary.

\subsection{Complex projective structures.}

Another major matter of interest in the theory of surfaces are complex projective structures, whose model is $\mathbb{CP}^1$ with $\mathrm{PSL}(2, \C)$ acting through projective transformations. They historically arose when mathematicians of the $19^{th}$ century were studying particular cases of order two complex differential equations on Riemann surfaces. To each can be associated a complex projective structure whose holonomy is exactly the monodromy of the associated differential equation.   The question of determining the representations that can be realised has been solved by Gallo, Kapovich and Marden:

\begin{thm}[Gallo-Kapovich-Marden, \cite{GKM}]

A representation $\rho: \pi_1 \Sigma \longrightarrow \mathrm{PSL}(2,\C)$ is the holonomy of a projective structure if and only if the two following condition hold:

\begin{enumerate}

\item the image of $\rho$ is a non-elementary subgroup of $\mathrm{PSL}(2,\C)$;

\item $\rho$ can be lifted to $\mathrm{SL}(2,\C)$.

\end{enumerate}

\end{thm}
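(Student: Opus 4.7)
The plan is to establish necessity and sufficiency separately; for the non-trivial direction I would use a pants decomposition combined with a realization lemma for individual pairs of pants, in the spirit of the original Gallo--Kapovich--Marden argument.

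\textbf{Necessity.} Fix a projective structure on $\Sigma$ with developing map $\dev:\widetilde{\Sigma}\to \mathbb{CP}^1$ and holonomy $\rho$. For condition (1), if $\rho$ were elementary its image would preserve either a point, a pair of points, or a round circle in $\mathbb{CP}^1$; since $\dev$ is a $\rho$-equivariant local homeomorphism from a simply connected surface, the developing image would be constrained to a strict projective or conformal subset of $\mathbb{CP}^1$ on which a closed surface of genus $g\geq 2$ cannot be so immersed, yielding a contradiction. For condition (2), a projective structure on $\Sigma$ equips the underlying Riemann surface with an oper, which is canonically associated to a rank $2$ holomorphic bundle $E=L\oplus L^{-1}$ with $L^2\cong K$; since theta characteristics exist on every Riemann surface, $\det E$ is trivial and the flat $\mathrm{PSL}(2,\C)$-bundle coming from the projective structure acquires an $\mathrm{SL}(2,\C)$-lift, so $\rho$ lifts.

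\textbf{Sufficiency.} Let $\rho$ be non-elementary and liftable, and fix a pants decomposition $\Sigma=P_1\cup\cdots\cup P_{2g-2}$ along curves $\gamma_1,\ldots,\gamma_{3g-3}$. The central step is a \emph{pants realization lemma}: for any triple $(A,B,C)\in \mathrm{SL}(2,\C)^3$ with $ABC=I$ generating a non-elementary subgroup, there exists a projective structure on a pair of pants whose boundary holonomies are conjugate to $A,B,C$. Granting this, I would realize $\rho|_{P_i}$ on each piece and then glue along the $\gamma_i$, using the twist parameters at each gluing curve to match the conjugations and reconstruct a global projective structure on $\Sigma$ whose holonomy is $\rho$.

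\textbf{Main obstacle.} I expect two difficulties of increasing depth. First, the pants realization lemma must be extended to degenerate triples, in particular when one of $A,B,C$ is parabolic or trivial while the triple itself remains non-elementary; here I would avoid any direct uniformization and instead construct the desired structure by grafting along simple arcs inside a known Fuchsian pair of pants, which modifies the geometry without changing the prescribed holonomy. Second, and more seriously, for a given non-elementary $\rho$ it may happen that every topological pants decomposition produces at least one pair of pants on which the three boundary holonomies generate an elementary subgroup, so the na\"ive piece-by-piece construction breaks down; the technical heart of the argument, as in Gallo--Kapovich--Marden, is a combinatorial/geometric reduction showing that after suitable grafting operations on $\Sigma$ (and possibly a change of decomposition) one can always arrange the non-elementary condition on every pair of pants, while preserving the $\mathrm{SL}(2,\C)$-lift throughout. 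This combined control of pants triples and lifts is, in my view, where the real work lies.
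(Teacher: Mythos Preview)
The paper does not prove this theorem at all: it is quoted in the introduction as a known result of Gallo, Kapovich and Marden (reference \cite{GKM}) to situate the author's own work on branched affine structures. There is therefore nothing in the paper to compare your argument against.

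That said, a few remarks on your sketch. Your sufficiency outline is broadly in the spirit of the original \cite{GKM} argument, though the actual paper does not proceed by first realizing each pair of pants and then gluing; rather, the hard step is to find a pants decomposition whose cuffs all have loxodromic holonomy and such that adjacent pants give ``good'' Schottky-type configurations, after which the pieces can be built and assembled. Your identification of the second obstacle (arranging non-elementariness on every pair of pants) is the right one, but the resolution is not via grafting --- it is a delicate topological argument producing the desired decomposition directly from the non-elementariness of $\rho$.

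Your necessity argument for condition (1) has a genuine gap. An elementary subgroup of $\mathrm{PSL}(2,\C)$ need not constrain the developing image to a proper subset of $\mathbb{CP}^1$: for instance, a group fixing $\infty$ acts on all of $\C$, and nothing a priori prevents a local homeomorphism $\widetilde\Sigma\to\C$. The actual obstruction is more delicate and typically uses that the developing map would then factor through an affine or Euclidean structure, which on a closed surface of genus $g\ge 2$ forces branch points (or one invokes an Euler-class computation). Your argument for condition (2) via opers and theta characteristics is correct in outline.
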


We say a projective structure is \textit{'branched'} when it has a finite number of special points which have punctured neighbourhoods projectively equivalent to a ramified cover of a point in $\mathbb{CP}^1$. For example a translation surface can be thought of as a particular case of \textit{branched} projective structure, thinking of $\C$ as a subset of $\mathbb{CP}^1$ and the set of translations as a subgroup of $\mathrm{PSL}(2,\C)$. Its branched points are the conical points, which are the zeros of the underlying holomorphic $1$-form.

\subsection{Branched affine structures.}

We will be interested in this article to \textit{branched affine structures}.
They lie somewhere inbetween translation and strictly projective structures: the model space is $\C$ and the transformation group is the one-dimensional complex affine group $\Aff$. A simple way to build examples of such structures is to glue the sides of a Euclidean polygon using affine transformations; the branched points will be located at the vertices. These are particular cases of branched projective structures as translation surfaces are particular cases of branched affine ones. 
\noindent Although these objects make their appearance in several different works (far from claiming for exhaustivity, see \cite{Mandelbaum}, \cite{Mandelbaum2},\cite{Gunning}, \cite{Veech}, \cite{McMullen}), they have not yet been investigated as a proper research subject. The author believes that these structures are rich and provide questions of both geometric and dynamic nature, together with natural links to  Teichmüller theory that make of them a distinguishable matter of interest. 

\vspace{2mm} This article will deal with the elementary question of determining which representations are the holonomy of a branched affine structure. 

\subsection{Mapping class group dynamics.}

A powerful tool to investigate the holonomy problem is the mapping class group action on the associated character variety. The latter parametrises the set of all representations of the fundamental group of a compact surface into the affine group $\Aff$ (up to conjugation), and is equipped with a natural action of the mapping class group by precomposition. 
\noindent A classical argument of Ehresmann popularised by Thurston ensures that the set of geometric representations is open in the character variety, and it is obviously invariant by the action of the mapping class group. A good understanding of this action is a path to the answer to the holonomy problem. In \cite{Ghazouani} we proved that the mapping class group action on the character variety is ergodic relatively to the Lebesgue measure. However, it only gives that almost every representation is geometric and does not characterise the obstruction to be.

\noindent In a remarkable unpublished note, Kapovich revisits Haupt's theorem recalled above (Theorem \ref{Haupt}), giving a proof completely based on the analysis of mapping class group action. In that case the action is nothing but the $\Sp$ action on an homogeneous space and the use of the powerful theorem of Ratner leads to a complete classification of the closed invariant subset of the character variety and consequently reproves Haupt's theorem.  
\noindent Note that mapping class group dynamics have already been extensively studied on $G$-character varieties, when $G$ is a reductive Lie group as $\mathrm{PSL}(2,\R)$,  $\mathrm{PSL}(2,\C)$ or $\mathrm{SU}(2)$, see for example \cite{Gold}, \cite{Gold2}, \cite{MW}, \cite{Palesi}, and for the case $G= \Aff$ we would like to mention \cite{CousinMoussard}.

\subsection{Results}

 In this article, we give a systematic description of the closure of the $\Mod$-orbit of a point in $\chi$. We are able to identify dynamical phenomenons of 'Ratner' type: the closure of an orbit is, up to few exceptions, a submanifold of $\chi$. More precisely, we prove the following theorem:

\begin{thm}

Consider $[\rho] \in \chi(\Gamma, \Aff)$ such that the image of its linear part $\alpha_{\rho}$ is not the group of $n^{th}$ roots of the unity for $n = 2,3,4,6$. Then the closure of its orbit under the $\Mod$-action is a real analytic submanifold of $\chi$. 

\end{thm}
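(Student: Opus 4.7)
The plan is to exploit the fibration
$$ \car \longrightarrow \Hl, \qquad [\rho] \longmapsto \alpha_\rho, $$
sending a representation to its linear part. The fiber above a character $\alpha$ is a torsor under the twisted cohomology $\Ht$, and the $\Mod$-action respects this fibration: on the base it factors through $\Sp$, while the Torelli subgroup $\tor$ acts trivially on the base and linearly on each fiber. The idea is therefore to describe the orbit closure in two stages (first the base, then the fibers) and reassemble.

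For the base, I would write $\Hl \simeq (\C^*)^{2g}$ and pass to logarithmic coordinates to split the $\Sp$-action into a $\Q$-irreducible linear action on $\R^{2g}$ (the log-modulus directions) and a symplectic toral automorphism action on $\U^{2g}$ (the argument directions). On the real-vector factor, $\Sp$-orbit closures are classified directly by rationality considerations. On the toral factor, Ratner-type results for higher-rank toral actions, together with Kapovich's approach quoted in the introduction, describe orbit closures as finite unions of translates of invariant subtori; the failure of an orbit closure to be a full subtorus forces the point to be a torsion point fixed by some elliptic element of $\Sp$. By the crystallographic restriction theorem, the only finite cyclic subgroups of $\C^*$ preserved by such an elliptic element are the $\mu_n$ for $n \in \{2,3,4,6\}$, which is precisely the excluded locus. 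Thus $\overline{\Sp \cdot \alpha_\rho}$ is a real-analytic submanifold of $\Hl$.

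For the fibers, I would control the $\tor$-action on $\Ht$ via a Johnson-type cocycle expressing how bounding pair maps and separating twists act on twisted $1$-cocycles. The hope is that, as soon as $\alpha$ escapes the Euclidean values, the image of $\tor$ in $\mathrm{GL}(\Ht)$ is large enough to realise a real Lie subgroup whose orbit closures are real-analytic subspaces of $\Ht$; stacking these fiberwise over the base closure from step one yields the desired real-analytic submanifold of $\car$.

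The main obstacle I expect is the fiber step: pinning down the image of $\tor$ in $\mathrm{GL}(\Ht)$ and proving density of generic orbits requires a precise computation of how Torelli generators act on twisted cohomology, and one must show that the four exceptional orders are exactly where this mechanism degenerates, because $\C_\alpha$ acquires extra lattice symmetry. A secondary technical point is verifying real-analyticity at base points where the fiber dimension jumps, which will likely require a stratified argument matching the various $\Sp$-stabiliser types along $\overline{\Sp \cdot \alpha_\rho}$.
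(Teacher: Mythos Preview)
Your overall two-stage strategy (base via $\Sp$-dynamics, then fibers via the Torelli action) matches the paper's architecture, but the proposal contains a genuine misidentification that would derail the argument.

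\textbf{The exceptional orders do not arise in the base.} You locate the obstruction at $n\in\{2,3,4,6\}$ in the $\Sp$-action on $\Hl$, invoking a crystallographic restriction on elliptic elements. In fact the paper's Section~4 shows, via Ratner's theorem applied to $\Sp\ltimes\Z^{2g}$ acting on $\R^{2g}\times\R^{2g}/\Z^{2g}$, that the base orbit closure is \emph{always} a real-analytic submanifold, with no exceptional values of $\alpha$ whatsoever. When $\mathrm{Im}(\alpha)=\langle e^{2i\pi/5}\rangle$, for instance, the $\Sp$-orbit in $\Hl$ is just as tame as for $n=4$. The distinction between $n=5$ and $n\in\{2,3,4,6\}$ is invisible at the level of the base.

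\textbf{The obstruction is entirely in the fiber.} The fiber over $\alpha$ is the projective space $\mathbf{P}(\Ht)\simeq\mathbb{CP}^{2g-3}$ (not a torsor; its dimension is constant away from the trivial character, so your worry about dimension jumps is unfounded). The paper shows that the Chueshev representation $r_\alpha:\Stab(\alpha)\to\mathrm{PGL}(\Ht)$ has dense image in $\mathrm{PGL}(2g-2,\R)$, $\mathrm{PU}(g-1,g-1)$ or $\mathrm{PGL}(2g-2,\C)$ depending on the type of $\alpha$, \emph{except} when $\mathrm{Im}(\alpha)=\langle e^{2i\pi/n}\rangle$ with $n\in\{2,3,4,6\}$. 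The mechanism is concrete: the action of a Dehn twist along a separating curve $\delta$ is a transvection $\lambda\mapsto\lambda+\lambda(\delta)\mu_\delta$, and in genus two a pair of such twists gives parabolics whose off-diagonal entries are products $(1-A)(1-B)$ with $A,B\in\mathrm{Im}(\alpha)$. J{\o}rgensen's inequality then forces non-discreteness precisely when $\mathrm{Im}(\alpha)$ is not one of the exceptional groups; conversely, for $n\in\{2,3,4,6\}$ the entries lie in $\Z$, $\Z[i]$ or $\Z[\omega]$ and the image is discrete. This is where the arithmetic restriction $\varphi(n)\le 2$ actually enters. The passage from genus two to arbitrary genus is not a Johnson-homomorphism argument but a Lie-theoretic one: the genus-two subsurfaces contribute copies of $\mathrm{SL}(2)$ acting on two-planes, and a classification lemma of Beukers--Heckman (Lemma~\ref{racine} in the paper) identifies the only irreducible connected groups containing such a copy.

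In short, your base paragraph is aiming at a phenomenon that isn't there, and your fiber paragraph, which is where the entire difficulty lives, is too speculative: ``Johnson-type cocycle'' and ``hope the image is large enough'' do not yet point toward the explicit transvection formula, the J{\o}rgensen criterion, or the Lie-group classification that the proof actually needs.
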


From the precise description of the orbits that we give (which is a consequence of the classification results of Section \ref{orbitclosure} and of Theorem \ref{image}), and additional geometric constructions to handle the particular cases that are out of reach through the $\Mod$ approach, we are able to completely characterise the representations which are the holonomy of a branched affine structure:

\begin{thm}

Let $\rho : \Gamma \longrightarrow \Aff$ be a non-abelian representation.

\begin{itemize}

\item If $\rho$ is not Euclidean, then it is the holonomy of a branched affine structure.

\item If $\rho$ is Euclidean, it is the holonomy of a branched affine structure if and only if its volume is positive. 

\end{itemize}

\end{thm}

\subsection{Acknowledgements.}

We are very grateful to Julien Marché for having shared his infinite wisdom on twisted cohomology by which he brought a conceptual light on a calculation of the author, rendering the latter far less mysterious, and to Yves Benoist and Tyakal Venkataramana for valuable discussions about Lemma \ref{racine}. It is also a pleasure to thank Jeremy Daniel for having, although very reluctantly, answered several of our questions on Lie group theory.  Finally, the author is infinitely indebted to Bertrand Deroin for his constant interest and support throughout this project.  

\vspace{2mm}

\section{Branched affine structures on compact oriented surfaces.}

\subsection{Basics.}

A (branched) affine structure on $\Sigma$ is a $(\Aff, \C)$-structure with a finite number of singular points at which the structure is 'branched', which means that at such a singular point, the structure is the pull-back of ramified cover(at this point) of finite degree. For the sake of precision, we give two equivalent definitions of what an affine structure is:

\begin{defi}

A branched affine structure on $\Sigma$ is a atlas of charts $(U_j, \varphi_j)$ such that 

\begin{enumerate}
\item There exist a finite number of points $p_1, \cdots, p_n$, each belonging to only one $U_j$

\item Every time two open of charts $U_i$ and $U_j$ overlap, the transition map $\varphi_i \circ \varphi_j^{-1} : \varphi_j(U_i \cap U_j) \longrightarrow  \varphi_i(U_i \cap U_j)$ is the restriction of an element $\Aff$ to $\varphi_j(U_i \cap U_j)$ on each of the connected components of $\varphi_j(U_i \cap U_j)$.

\end{enumerate}

\end{defi}

Any chart can be analytically continued to define a 'super chart' on the universal cover of $\Sigma$ which is equivariant with respect to a representation of $\Gamma$ in $\Aff$. This remark leads to an alternative definition of a branched affine structure.

\begin{defi}

A branched affine structure on a compact Riemann surface $\Sigma$ is a non constant holomorphic function $\mathrm{dev} : \widetilde{\Sigma} \longrightarrow \mathbb{C}$ together with a group homomorphism $\rho : \Gamma \longrightarrow \mathrm{Aff}(\mathbb{C}) $ such that $\mathrm{dev}$ is $\rho$-equivariant, \textit{i.e.} satisfies that for every $z \in \widetilde{\Sigma}$ and every $\gamma \in \Gamma$, we have $$ \mathrm{dev}(\gamma \cdot z ) = \rho(\gamma)( \mathrm{dev}(z) ) $$

\noindent $\mathrm{dev}$ is called the developing map of the structure and $\rho$ the holonomy morphism of the structure.

\end{defi}

 Affine structures arise naturally either as generalisation of flat and translations surfaces (see \cite{Veech} for a very nice description of the structure of their moduli spaces, and also \cite{Veech2} for an investigation of their basic geometric properties), or as particular cases of branched projective structures whose holonomy has image an elementary subgroup of $\mathrm{PSL}(2,\C)$. A rather elementary but fundamental way to build affine structures is to glue along affine transformations the sides of a (collection of) polygon. 

\subsection{Holonomy}

Two affine surfaces are isomorphic if there exists a bijection between them which is affine when restricted to affine charts. In particular it implies that if $(\dev, \rho)$ is an affine structure on $\Sigma$ and $f \in \Aff$,  $f \circ \dev$ defines the same affine structure and the holonomy representation associated to $f \circ \dev$ is $f\circ \rho \circ f^{-1}$. Conversely two isomorphic affine structures on $\Sigma$ define holonomy representations which are only conjugated by an element of $\Aff$. The class up to conjugation of the holonomy representation therefore defines a geometric invariant of the structure. This invariant is known to be far from classifying; describing the set of surfaces having the same holonomy as been investigated in several contexts (see for instance \cite{BabaGupta} or \cite{CDF2} in the case of branched projective structures).

\vspace{2mm} The question we are going to investigate in this paper is to determine the representations that arise as the holonomy of a branched affine structure.

\subsection{Surgeries.}

We describe in this subsection procedures which we call surgery, which consist in  cutting  affine surfaces along several geodesic segments and gluing them back along those segments with different combinatorics in order to get new affine structures.

\subsubsection{Connected sum.}
\label{connected}
A geodesic line(resp. segment) on an affine surface is a path which is parametrised in any chart by a straight line(resp. segment). Since being a straight line(resp. segment) is invariant by $\Aff$, those objects are well-defined.

\noindent Consider two closed affine surfaces $\Sigma_1$ and $\Sigma_2$, as well as two geodesic segments $\gamma_1 \subset \Sigma_1$ and $\gamma_2 \subset \Sigma_2$. Cut along $\gamma_1$(resp. $\gamma2$) to get a surface $\Sigma_1'$(resp. $\Sigma_2'$) with a unique piecewise geodesic boundary component $\gamma_1^+ \cup \gamma_1^- $  (resp. $\gamma_2^+ \cup \gamma_2^- $). The surgery consists in gluing $\gamma_1^+$ to $\gamma_2^-$ and $\gamma_2^+$ to $\gamma_1^-$ respecting the affine structure. We get this way an affine structure on $\Sigma_1 \# \Sigma_2$ with two new branched points at the end of the image of $\gamma_1^+$ , both of angle $4\pi$.

\begin{figure}[!h]
\centering
\includegraphics[scale=0.8]{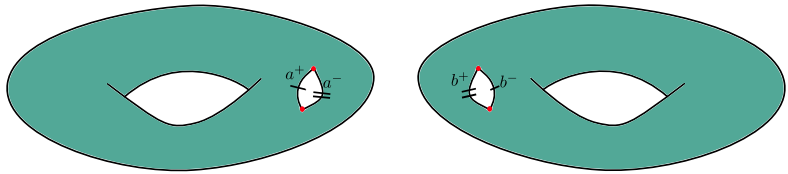}
\caption{The 'connected sum' surgery.}
\label{twotwists}
\end{figure}

\vspace{2mm}

A fact worth noticing is that the linear holonomy  $\alpha$ of this structure can be easily computed in terms of the ones of $\Sigma_1$ and $\Sigma_2$. The first homology group $\mathrm{H}_1(\Sigma_1 \# \Sigma_2, \Z)$ is equal to the product $\mathrm{H}_1(\Sigma_1, \Z) \times \mathrm{H}_1(\Sigma_2, \Z)$ and $\alpha : \mathrm{H}_1(\Sigma_1 \# \Sigma_2, \Z) \longrightarrow \C^*$ is equal to the product $\alpha_1 \times \alpha_2$. 

\subsubsection{Adding a handle.}

\label{adding}
There is a surgery, very similar to the previous one, that consists in creating a handle on an initial surface $\Sigma$ of genus $g$. Consider $a$ and $b$ two segments on $\Sigma$ and cut along these two segments. We get a new surface with two boundary components $a^+ \cup a^-$ and  $b^+ \cup b^-$.. Glue $a^+$ to $b^-$ and $a^-$ to $b^+$(along the unique affine transformations mapping $a^+$ to $b^-$ and $a^-$ to $b^+$)  to get a new affine structure on the compact surface of genus $g+1$. 

\vspace{2mm} Remark that on the new handle, a the loop has trivial holonomy (the one that surrounds  $a^+ \cup a^- = b^+ \cup b^-$).

\section{Character variety and mapping class group dynamics.}

\subsection{Character variety.}

We define in this section the character variety, which is roughly the set of representation of $\Gamma$ up to conjugation. We could directly take the quotient $\mathrm{Hom}(\Gamma, \Aff) / \Aff$ for the natural action of $\Aff$ by conjugation, which is the most natural thing to do. Unfortunately, this quotient is not very nice, it is not an analytic variety nor a smooth manifold and worst, is not even Hausdorff. This difficulty can be avoided by analyzing with little more care the structure of $\mathrm{Hom}(\Gamma, \Aff)$ and the action of $\Aff$.

\vspace{2mm}

Recall that $\Aff$ is canonically isomorphic to the semi direct product $\C^* \ltimes \C$ where $\C^*$ acts linearly on $\C$ as $\mathrm{GL}(1,\C)$. Therefore any representation $\rho : \Gamma \longrightarrow \Aff $ is the data of two functions $\Li_{\rho}$ and $\Tr_{\rho}$ such that 

\begin{enumerate}

\item $\Li_{\rho} : \Gamma \longrightarrow \C^*$ is a group homomorphism.

\item $\Tr_{\rho} : \Gamma \longrightarrow \C $ satisfies $\forall \gamma_1, \gamma_2 \in \Gamma$, $$ \Tr_{\rho}(\gamma_1 \cdot \gamma_2) = \Tr_{\rho}(\gamma_1) + \Li_{\rho}(\gamma_1)\Tr_{\rho}(\gamma_2)$$
\end{enumerate}

\begin{itemize}

\item  Since $\C^*$ is abelian, $\Li_{\rho}$ factorizes through 
$$ \Li_{\rho} : \mathrm{H}_1(\Sigma, \Z) \longrightarrow \C^* $$ because $\mathrm{H}_1(\Sigma, \Z)$ is the abelianisation of $\Gamma$. $\Li_{\rho}$ can then be seen as an element of $\Hl \simeq \mathrm{Hom}(\mathrm{H}_1(\Sigma, \Z), \C^*)$.

\item Let $\alpha$ be an element of $\Hl$. We define 

$$ \mathrm{Z}^1_{\alpha}(\Gamma, \mathbb{C}) = \{ \lambda : \Gamma \longrightarrow \mathbb{C} \ | \ \forall \gamma_1, \gamma_2 \in \Gamma, \ \lambda(\gamma_1\cdot \gamma_2) = \lambda(\gamma_1) + \alpha(\gamma_1)\lambda(\gamma_2)  \}  $$ 

$ \mathrm{Z}^1_{\alpha}(\Gamma, \mathbb{C})$ is a complex vector space. Its dimension can be computed in the following way : $\Gamma$ is a finitely generated group, with generators $a_1, b_1, \cdots, a_g, b_g$ and a unique relation between those generators $\prod_{i=1}^g{[a_i, b_i]} = 1$. Any element $\lambda \in \mathrm{Z}^1_{\alpha}(\Gamma, \mathbb{C})$ is characterised by its values $\lambda(a_1), \lambda(b_1), \cdots, \lambda(a_g), \lambda(b_g)$ and these numbers must satisfy the following relation (which is simply applying $\lambda$ to the relation on the generators) :

$$ \sum_{i=1}^g{  \lambda(a_i)(1 - \alpha(a_i) ) + \lambda(b_i)(1 - \alpha(b_i) )  }  = 0$$  

Conversely any data of $2g$ complex numbers satisfying the relation above defines an element of $ \mathrm{Z}^1_{\alpha}(\Gamma, \mathbb{C})$. The relation is trivial if and only if $\alpha \equiv 1$. Hence $\mathrm{Z}^1_{\alpha}(\Gamma, \mathbb{C})$ has complex dimension $2g$ if $\alpha \equiv 1$ and $2g -1$ in all other cases. 

\item For every $\rho \in \mathrm{Hom}(\Gamma, \Aff)$, $\Tr_{\rho}$ belongs to $\mathrm{Z}^1_{\Li_{\rho}}(\Gamma, \mathbb{C})$.

\end{itemize}

All the previous remarks can be rephrased in the following proposition : 

\vspace{3mm}

\begin{prop}

The projection $$ \pi : \mathrm{Hom}(\Gamma, \Aff) \longrightarrow \Hl $$ gives $\mathrm{Hom}(\Gamma, \Aff) \setminus \pi^{-1}(\{ 1 \})$ the structure of a complex vector bundle whose fiber has complex dimension $2g - 1$.  

\end{prop}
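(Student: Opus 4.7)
The plan is to exhibit the map $\pi$ over $\Hl \setminus \{1\}$ as the kernel of a single holomorphically varying linear form on a trivial rank-$2g$ bundle, and to read off local trivializations from this description.

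First, I would make the total space concrete. Fix a standard set of generators $a_1, b_1, \dots, a_g, b_g$ of $\Gamma$ with defining relation $\prod_{i=1}^g [a_i,b_i] = 1$. Evaluating $\rho$ on these generators identifies $\mathrm{Hom}(\Gamma, \Aff)$ with the subvariety of $\Aff^{2g}$ cut out by this single relation. Using $\Aff \simeq \C^* \ltimes \C$, I would split each factor into linear and translation parts: on the linear coordinates the commutator relation is automatically satisfied since $\C^*$ is abelian, so $\pi$ is surjective onto $\Hl$ and the fibers over any $\alpha \in \Hl$ are precisely the cocycles $\mathrm{Z}^1_\alpha(\Gamma, \C)$. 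On the translation coordinates $(\Tr_\rho(a_i), \Tr_\rho(b_i))_{i=1}^g \in \C^{2g}$, applying $\Tr_\rho$ to the long commutator relation and using the cocycle identity produces exactly the single $\C$-linear equation already recalled in the text, whose coefficient vector $w(\alpha) \in \C^{2g}$ depends polynomially on $\alpha \in \Hl$.

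Next, I would show that $w(\alpha)$ never vanishes outside $\alpha = 1$. The entries of $w(\alpha)$ are (up to sign) of the form $1 - \alpha(a_i)$ and $1 - \alpha(b_i)$; hence $w(\alpha) = 0$ forces $\alpha(a_i) = \alpha(b_i) = 1$ for all $i$, i.e.\ $\alpha = 1$. On the complement we may cover $\Hl \setminus \{1\}$ by the Zariski open sets $U_k = \{\alpha : w(\alpha)_k \neq 0\}$ and, on each $U_k$, solve the linear equation for the $k$-th translation coordinate, producing an explicit trivialization $\pi^{-1}(U_k) \simeq U_k \times \C^{2g-1}$.

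Finally, the transition maps between two such trivializations are given by linear automorphisms of $\C^{2g-1}$ whose entries are rational functions of $\alpha$ with non-vanishing denominators on the overlaps, hence holomorphic. This equips $\mathrm{Hom}(\Gamma, \Aff) \setminus \pi^{-1}(\{1\})$ with the structure of a complex vector bundle of rank $2g-1$ over $\Hl \setminus \{1\}$, as asserted. The argument is essentially bookkeeping once the fiberwise analysis of $\mathrm{Z}^1_\alpha(\Gamma, \C)$ done earlier in the section is in hand; the only genuinely substantive point is the non-vanishing of $w(\alpha)$ away from the identity character, which is immediate from its explicit form.
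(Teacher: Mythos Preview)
Your argument is correct and follows the same line as the paper, which simply declares the proposition to be a rephrasing of the preceding fiberwise computation of $\mathrm{Z}^1_{\alpha}(\Gamma,\C)$ without spelling out local triviality. You go a bit further by making the local trivializations and holomorphic transition maps explicit via the non-vanishing of the coefficient vector $w(\alpha)$, which is a welcome addition but not a different approach.
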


$\Aff$ acts by conjugation on $\mathrm{Hom}(\Gamma, \Aff)$: if $f \in \Aff$ and $\rho \in \mathrm{Hom}(\Gamma, \Aff)$, $(f \cdot \rho)(\gamma) = f \circ \rho(\gamma) \circ f^{-1}$. Notice that 

\begin{enumerate}

\item $\forall f \in \Aff$, $\pi(f\cdot \rho) = \pi(\rho)$.

\item $\forall \alpha \in \Hl$, $1- \alpha \in \mathrm{Z}^1_{\alpha}(\Gamma, \mathbb{C})$.

\item If $f = z \mapsto az + b$, $\Tr_{f \cdot \rho} = a \Tr_{\rho} + b(1 - \Li_{\rho})$.

\end{enumerate}

We introduce $\Ht =  \mathrm{Z}^1_{\alpha}(\Gamma, \mathbb{C}) / \C \cdot (1- \alpha) $. We use this specific notation because $\Ht$ is first cohomology group of $\Gamma$ twisted by $\alpha$. We also introduce  $\mathrm{Hom}'(\Gamma, \Aff)$ to be subset of  $\mathrm{Hom}(\Gamma, \Aff)$ made of representations whose image is not an abelian subgroup of $\Aff$. We define the character variety to be the equivalence classes of representations whose image is not an abelian subgroup under the action of $\Aff$ :

$$ \chi(\Gamma, \Aff) = \mathrm{Hom}'(\Gamma, \Aff) / \Aff $$

\noindent We then have the following description of the structure of $ \chi(\Gamma, \Aff)$:

\begin{prop}

The projection $$ \pi :\chi(\Gamma, \Aff) \longrightarrow \Hl $$ gives $\chi(\Gamma, \Aff)$ the structure of a $\mathbb{CP}^{2g-3}$-bundle over $\Hl \{ 1 \} \simeq (\C^*)^{2g} \setminus \{1, \cdots, 1 \}$.

\end{prop}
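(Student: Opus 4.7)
The plan is to combine the previous proposition (giving the complex vector bundle structure on $\mathrm{Hom}(\Gamma,\Aff)\setminus\pi^{-1}(\{1\})$) with a careful fiberwise analysis of the conjugation action of $\Aff$. Since $\Aff \cong \C^*\ltimes\C$, I would handle the translation subgroup $\C$ and the dilation subgroup $\C^*$ separately.

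First I would quotient by translations. On each fiber $\mathrm{Z}^1_\alpha(\Gamma,\C)$, the formula $\Tr_{f\cdot\rho} = a\Tr_\rho + b(1-\Li_\rho)$ with $f(z)=az+b$ specializes for translations ($a=1$) to $\Tr_\rho \mapsto \Tr_\rho + b(1-\alpha)$. When $\alpha\not\equiv 1$ the cocycle $(1-\alpha)$ is a nonzero element of $\mathrm{Z}^1_\alpha(\Gamma,\C)$, hence the $\C$-action is free on each fiber and the quotient is exactly $\Ht = \mathrm{Z}^1_\alpha(\Gamma,\C)/\C\cdot(1-\alpha)$, of complex dimension $2g-2$.

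Next I would identify the locus of non-abelian representations inside this quotient. The key observation is that any abelian subgroup of $\Aff$ which is not contained in the translations must fix a unique point $p\in\C$, so that $\rho(\gamma)(z) = \alpha(\gamma)(z-p)+p$; reading off the translation part gives $\Tr_\rho(\gamma) = p(1-\alpha(\gamma))$, i.e.\ $\Tr_\rho \in \C\cdot(1-\alpha)$. Conversely, if $\Tr_\rho = p(1-\alpha)$ then $\rho$ stabilizes $p$ and hence has abelian image. Thus over $\Hl\setminus\{1\}$, the non-abelian representations are precisely those whose class in $\Ht$ is nonzero.

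It then remains to quotient by the dilation subgroup $\C^*\subset\Aff$, which acts on $\Ht$ by scalar multiplication. On $\Ht\setminus\{0\}$ this action is free with quotient the projective space $\mathbb{P}(\Ht)\cong\mathbb{CP}^{2g-3}$, giving the fiber of $\chi(\Gamma,\Aff)\to\Hl\setminus\{1\}$. Finally, to upgrade this fiberwise description to a genuine $\mathbb{CP}^{2g-3}$-bundle, I would use the local trivializations provided by the previous proposition: they are compatible with the $\Aff$-action (since the conjugation action on fibers depends algebraically on $\alpha$), so the projectivizations patch together to a locally trivial projective bundle. The main subtle point is the characterization of abelian image in terms of coboundaries in step two; once that is in place, the rest is a direct linear/projective quotient argument.
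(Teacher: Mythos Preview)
Your argument is correct and complete. Note, however, that the paper does not actually prove this proposition in the text: it simply refers the reader to \cite{Ghazouani}, Proposition~1.2. So there is no in-paper proof to compare against; your write-up in fact supplies what the paper omits. The decomposition you use---first quotienting by the translation subgroup to pass from $\mathrm{Z}^1_\alpha(\Gamma,\C)$ to $\Ht$, then identifying the abelian locus with the zero class via the fixed-point characterization, and finally projectivizing under the residual $\C^*$-action---is the natural one given the three numbered observations the paper records just before the statement, and is presumably what the cited reference does as well.
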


For a proof, see \cite{Ghazouani}, Proposition 1.2.

\subsection{Action of the mapping class group.}

The mapping class group is classicaly defined to be the group of connected components of $\mathrm{Diff}^+(\Sigma)$ the group of preserving orientation diffeomorphisms of $\Sigma$, namely 

$$ \Mod = \mathrm{Diff}^+(\Sigma) / \mathrm{Diff}_0(\Sigma)  = \mathrm{H}_0(\mathrm{Diff}^+(\Sigma)) $$

Recall that $\Gamma = \pi_1(\Sigma, *)$. A diffeomorphism $f \in \mathrm{Diff}^+(\Sigma)$ induces a group isomorphism :

$$ f_* : \pi_1(\Sigma, *) \longrightarrow \pi_1(\Sigma, f(*)) $$

\noindent An arbitrary choice of a path $c$ going from $*$ to $f(*)$ gives an identification between $\pi_1(\Sigma, *)$ and $\pi_1(\Sigma, f(*))$ and post-composing by such an identification gives us $f'_* : \Gamma \longrightarrow \Gamma$. Two such choices give automorphisms of $\Gamma$ which are conjugated, $f$ therefore defines an element of $\mathrm{Out}(\Gamma)$. Since $f$ is orientation preserving, any $f'_* : \Gamma \longrightarrow \Gamma$ preserves the fundamental class in $\mathrm{H}^2(\Gamma, \Z)$. We have then defined a group homomorphism : 

$$ \varphi : \Mod \longrightarrow \mathrm{Out}^+(\Gamma)$$ 

where $\mathrm{Out}^+(\Gamma)$ is the subgroup of $\mathrm{Out}(\Gamma)$ of elements preserving the fundamental class in $\mathrm{H}^2(\Gamma, \Z)$. A famous theorem of Dehn, Nielsen and Baer states that $\varphi$ is an isomorphism, see \cite[p.232]{FarbMargalit}. 

\vspace{4mm}

$\mathrm{Aut}(\Gamma)$ acts naturally on $\mathrm{Hom}(\Gamma, \Aff)$ by precomposition. We are going to prove in this paragraph that this action induces an action of $\mathrm{Out}(\Gamma)$ on $\car$.
\noindent Consider $\phi, \psi \in \mathrm{Aut}(\Gamma)$, note $[\phi], [\psi]$ their respective class in $\mathrm{Out}(\Gamma)$. Assume $[\phi] = [\psi]$. In other words, there exists $g \in \Gamma$ such that $\phi = g \cdot \psi \cdot g^{-1}$. Take $\rho \in \mathrm{Hom}'(\Gamma, \Aff)$. Then, $\forall \gamma \in \Gamma$ we have 

$$ \rho \circ \phi (\gamma) = \rho( g \psi(\gamma) g^{-1} ) = \rho(g) \cdot \rho\circ\psi(\gamma) \cdot \rho(g)^{-1} $$ 

Hence for all $\rho \in  \mathrm{Hom}'(\Gamma, \Aff)$, $\rho \circ \phi$ and $\rho \circ \psi$ belong to the same class in $\car$. This proves the $\mathrm{Aut}(\Gamma)$-action induces an action of $\mathrm{Out}(\Gamma) = \Mod$ on $\car$. We will refer to this action by \textit{the action of} $\Mod$ \textit{by precomposition}.

\subsection{The subset of holonomy of branched affine structures.}

We say of a representation $\rho : \Gamma \longrightarrow \Aff$ (or of its class in $\car$) that it is \textit{geometric} if it is the holonomy of a branched affine structure.

\begin{prop}
\label{openess}
The subset of geometric representations is an open subset of $\mathrm{Hom}(\Gamma, \mathrm{Aff}(\mathbb{C}))$. Its projection is therefore open in $\car$ and it is $\Mod$-invariant.

\end{prop}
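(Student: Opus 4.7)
The plan is to apply the Ehresmann--Thurston openness principle, adapted to the branched setting. Given a branched affine structure $(\dev_0, \rho_0)$ on $\Sigma$, I want to produce, for every $\rho$ in some neighborhood of $\rho_0$ in $\mathrm{Hom}(\Gamma, \Aff)$, a branched affine structure whose holonomy is $\rho$.

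First, I would fix disjoint embedded disks $D_1, \ldots, D_n$ around the branch points $p_1, \ldots, p_n$, chosen small enough that on each $D_i$ the developing map $\dev_0$ is conjugate, via an element of $\Aff$, to the standard branched chart $z \mapsto z^{k_i}$ where $k_i$ is the local branching order. On the compact surface with boundary $K = \Sigma \setminus \bigsqcup_i \mathrm{int}(D_i)$ the affine structure is genuinely unbranched. The classical Ehresmann--Thurston argument---cover $K$ by finitely many affine charts and interpolate the transition maps along a path from $\rho_0$ to $\rho$ using a partition of unity---then yields a $\rho$-equivariant developing map $\dev$ on $\widetilde{K}$ that is close to $\dev_0$ in the compact-open topology.

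Second, I would extend the new structure across the branch points. The key observation is that each $\partial D_i$ is null-homotopic in $\Sigma$, hence its holonomy is trivial under every representation of $\Gamma$; in particular the local monodromy around each puncture remains trivial after deformation. Combined with the $C^0$-closeness of $\dev$ to $\dev_0$ along the boundary, this allows me to fill each puncture with a branched disk of order $k_i$ pulled back from the model $z \mapsto z^{k_i}$ by an affine diffeomorphism matching the boundary data. The result is a branched affine structure on $\Sigma$ with holonomy $\rho$, establishing that the set of geometric representations is open in $\mathrm{Hom}(\Gamma, \Aff)$; openness of its projection in $\chi(\Gamma, \Aff)$ follows formally from the fact that $\Aff$ acts continuously.

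The $\Mod$-invariance is immediate: if $(\dev, \rho)$ is a branched affine structure and $f \in \mathrm{Diff}^+(\Sigma)$ with some lift $\tilde{f}$ to the universal cover, then $(\dev \circ \tilde{f}, \rho \circ f_*)$ is another branched affine structure, so $\rho \circ \phi$ is geometric whenever $\rho$ is, for any $\phi \in \mathrm{Out}(\Gamma)$ in the image of $\Mod$. The main technical point is the gluing step near the branch points: one must verify that the extension can be carried out while exactly preserving the branched model $z \mapsto z^{k_i}$. This relies on the trivial local monodromy and the freedom to precompose the standard model with an affine diffeomorphism of a collar neighborhood to align boundary data.
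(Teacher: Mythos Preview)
Your argument and the paper's both invoke the Ehresmann--Thurston principle, but implement it differently. The paper interprets a branched affine structure as a section of the flat $\mathbb{CP}^1$-bundle $M_\rho \to \Sigma$ that is transverse to the horizontal foliation $\mathcal{F}_\rho$ except at finitely many finite-order tangencies; it then assembles the $M_\rho$ into a bundle $E$ over a neighborhood of $\rho_0$, chooses local product trivializations in which all the foliations $\mathcal{F}_\rho$ are simultaneously horizontal, and pushes the fixed section $s_0$ through. In those coordinates the section and the foliation are unchanged, so the tangency locus and orders persist automatically --- the branch points require no separate treatment. Your excise--deform--refill route is the more classical incarnation and is legitimate in outline, but the refilling step needs more than you supply: $C^0$-closeness on the collar does not by itself control the local degree, and ``precompose the standard model with an affine diffeomorphism of a collar neighborhood'' cannot align arbitrary nearby boundary data --- an element of $\Aff$ is far too rigid for that. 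What actually works is to use $C^1$-closeness to lift the deformed developing map on the collar through the branched cover $z \mapsto z^{k_i}$, obtaining a diffeomorphism $h$ of the collar close to the identity with $\dev = h(z)^{k_i}$, and then extend $h$ smoothly across the disk. The paper's bundle viewpoint is purchased precisely to avoid this gluing analysis.
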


\begin{proof}

The $\Mod$ invariance is rahter easy. Consider an affine structure of (class of) holonomy $\rho$ and  $f$ a diffeomorphism of $\Sigma$. The affine structure pulled-back by $f$ has holonomy $[\rho \circ f^*]$,  and $[\rho \circ f^*]$ is therefore geometric. 

\vspace{2mm}

We now explain why the general Ehresmann-Weil-Thurston principle for geometric structures ensures that the set of geometric representations is open. Let $\rho_0$ be the holonomy representation of an affine structure on $\Sigma$ and $U \subset \mathrm{Hom}(\Gamma, \mathrm{Aff}(\mathbb{C}))$ an open subset containing $\rho_0$. The group $\Gamma$ acts properly and discontinuously on  $U \times \tilde{\Sigma} \times \mathbb{CP}^1 $ the following way

$$\begin{array}{ccc}

U \times \tilde{\Sigma} \times \mathbb{CP}^1  & \longrightarrow  & U \times \tilde{\Sigma} \times \mathbb{CP}^1  \\

(\rho, x, z)                       & \longmapsto      &  (\rho, \gamma \cdot x, \rho (\gamma) \cdot z)
\end{array}$$

\noindent We denote the quotient of this action $E$. The natural projection $E \rightarrow U$ is a submersion whose fibers are compact (they are all diffeomorphic to $\Sigma \times \mathbb{CP}^1$). Ehresmann's theorem ensures that it is actually a fiber bundle whose fiber over $\rho$ we denote $M_{\rho}$. It is itself a flat bundle in $\mathbb{CP}^1$ over $\Sigma$, whose monodromy is exactly $\rho$. We denote by $\mathcal{F}_{\rho}$ the foliation associated to its flat connection.

\vspace{2mm}

The projection on the factor $\Sigma$ is also a fiber bundle of fiber $U \times \mathbb{C}P^1$. Eheresmann's theorem together with the continuous family of foliations $\mathcal{F}_{\rho}$ provide for each point $p \in S$ an open set $\Omega_p \ni p$ such that a neighbourhood of $(\rho_0, p ) \times \mathbb{CP}^1 $ in $E$ has product structure of the form  $U' \times \Omega  \times \mathbb{CP}^1$ such that the sets ${\rho} \times \Omega \times \{ z \}$ are included in leaves of $\mathcal{F}_{\rho}$.

We are now set to prove that an affine structure of holonomy $\rho_0$ can be deformed to an affine structure of holonomy $\rho$ for $\rho$ close to $\rho_0$. Recall that a branched affine structure on $\Sigma$ of holonomy $\rho$ is a section of $M_{\rho}$ which is transverse to $\mathcal{F}_{\rho}$ except at a finite number of points where it is tangent to the foliation at a finite order. Consider an arbitrary trivialisation $V \times M_{\rho_0}$ of $E$ above a neighbourhood $V$ of $\rho_0$. The graph of a section $s_0$ of $M_{\rho_0}$ can be pushed to each $M_{\rho}$ for $\rho \in V$ by means of this trivialisation. Using the trivialisations defined in the next paragraph, it is easy to see that if $s_0$ was transverse to $\mathcal{F}_{\rho}$ except at a finite number of points, so are its pushed forwards. Moreover, at the finite number of points, the pushed forward must have same order of tangency and they therefore define affine structures of holonomy $\rho$ for all $\rho \in V$. 

\end{proof}

\subsection{Euclidean representations.}

\label{euclidean}

\subsubsection{The volume of a Euclidean representation.}
Euclidean representations are in some way essentially different from stricly affine ones. A remarkable fact is that we can, for such Euclidean representations, define an invariant called the \textbf{volume}. For a Euclidean representation $\rho$, it is the total volume of the pull back of volume form of $\C$ by any $\rho$-equivariant map $\widetilde{\Sigma} \longrightarrow \C$, see \cite{Ghazouani} for further details on this construction. \\ There is a more cohomological way to define this \textbf{volume}. A Euclidean representation can be thought of as an element of  $\mathrm{Z}^1_{\alpha}(\Gamma, \mathbb{C})$. The cup product 

$$ \wedge : \Ht \times \mathrm{H}_{\overline{\alpha}}(\Gamma, \C) \longrightarrow \mathrm{H}^2(\Gamma, \C) = \C $$ 

\noindent is a non-degenerated bilinear form (because of the Poincar\'e duality) which makes

$$\begin{array}{ccccc}
\mathrm{vol} & : & \Ht \times \Ht & \longrightarrow &  \mathrm{H}^2(\Gamma, \C) = \C \\
      &    &    (\mu, \nu) & \longmapsto &  \mu \wedge \overline{\nu} 
\end{array}  $$ 

\noindent a non-degenerated Hermitian form of signature $(g-1,g-1)$ (see \cite{Ghazouani}, Proposition 6.2). Since a point a Euclidean representation in $\chi$ is a point in $\mathbf{P}(\Ht)$, it makes sense to say that is has \textit{positive}, \textit{null} or \textit{negative} volume. This \textit{'sign'} is $\Mod$-invariant; in particular a Euclidean geometric representation must have positive volume. 

\subsubsection{The Torelli group action}

The natural action of the Torelli group on $\Ht$ must preserve the form $\mathrm{vol}$. This therefore defines for every unitary $\alpha$ a representation $r_{\alpha}$

$$ r_{\alpha} : \mathcal{I}(\Sigma) \longrightarrow \mathrm{PU}(\mathrm{vol}) \simeq  \mathrm{PU}(g-1, g-1).$$ 

We are going to come back to this action in careful detail in Section \ref{torelli}

\section{The $\mathrm{Mod}(\Sigma)$-action on $\mathrm{H}^1(\Sigma, \mathbb{C}^*)$.}

\label{modlinear}

The $\Mod$-action on $\Hl$ is the action by precomposition, when thinking of $\Hl$ as $\mathrm{Hom}(\mathrm{H}_1(\Sigma, \Z), \mathbb{C}^*)$. Up to the choice of a symplectic basis of  $\mathrm{H}_1(\Sigma, \Z)$, $\mathrm{H}_1(\Sigma, \C^*)$ identifies to $(\mathbb{C}^*)^{2g} \simeq \mathbb{R}^{2g} \times \mathbb{R}^{2g}/ \Z^{2g}$ and the $\Mod$-action factors through the diagonal linear action of $\Sp$ on $\mathbb{R}^{2g} \times \mathbb{R}^{2g}/ \Z^{2g}$. 
We describe in this Section the possible orbit closures of this action.

\subsection{Closed subgroups of $\C^*$ and closed invariant subsets of $\Hl$.}

Subsets of $\Hl$ which consists of elements $\alpha$ such that $\mathrm{Im}(\alpha) \subset H$, where $H$ is a closed subgroup of $\C^*$ are closed in $\Hl$ and invariant under the action of the mapping class group. A first step in the understanding of orbit closures of the $\Mod$-action on $\Hl$ is therefore to list such subgroups $H$.

\begin{prop}
Let $H$ be a closed subgroup of $\C^*$.

\begin{enumerate}

\item Either $H$ is a finite subgroup spanned by a root of unity; \sk

\item or $H$ is discrete and of the form $ \{ e^{na} \ | \ n \in \Z\}$ for some complex number $a$; \sk

\item or $H$ is a $1$-parameter subgroup of the from $H_a = \{ e^{ta} \ | \ t \in \R\}$; \sk

\item or $H$ is the product of one of the last two type by a a finite subgroup spanned by a root of unity; \sk

\item or $H$ is $\C^*$.

\end{enumerate}

\end{prop}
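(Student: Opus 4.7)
The plan is to lift the classification to the Lie algebra via the complex exponential $\exp:\C\to\C^*$, which is a surjective Lie group homomorphism with kernel $2\pi i\,\Z$. The assignment $H\mapsto\tilde H:=\exp^{-1}(H)$ is an inclusion-preserving bijection between closed subgroups of $\C^*$ and closed subgroups of $\C$ containing $2\pi i\,\Z$, so the task reduces to enumerating the latter.

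For this I would invoke the classical structure theorem for closed subgroups of $\R^n$: every such subgroup decomposes as $V\oplus\Lambda$, where $V$ is a linear subspace and $\Lambda$ is a discrete subgroup lying in a complementary subspace. In dimension two this produces a short list of possibilities for $\tilde H$: the trivial group, a rank-one lattice $\Z v$, a rank-two lattice $\Z v_1\oplus\Z v_2$, a line $\R v$, a line plus a transverse $\Z$-factor $\R v\oplus\Z w$, and all of $\R^2$.

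I would then run through each possibility, imposing the constraint $2\pi i\,\Z\subset\tilde H$ and pushing the result back via $\exp$. The extreme cases $\tilde H=\{0\}$ and $\tilde H=\R^2$ give $H=\{1\}$ and $H=\C^*$ directly. A rank-one lattice is forced to equal $(2\pi i/k)\Z$ for some integer $k\geq 1$, producing $H=\mu_k$, case (1). A line must contain $2\pi i$, hence equals $i\R$, so $H=S^1$, a special instance of (3). For a rank-two lattice the key move is to use the sublattice $\tilde H\cap i\R=(2\pi i/k)\Z$, produced by the elementary-divisor theorem, to extract an adapted basis $(\alpha,2\pi i/k)$ of $\tilde H$; the image under $\exp$ is then $\langle e^{\alpha}\rangle\cdot\mu_k$, case (4) with the discrete cyclic factor. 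For the line-plus-$\Z$ piece, solving $2\pi i=tv+nw$ and reducing $w$ modulo $\R v$ places $w$ at $2\pi i/n$, so $\tilde H=\R v\oplus(2\pi i/n)\Z$ and $H=H_v\cdot\mu_n$, case (4) with the one-parameter factor.

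The only step that needs genuine care is the adapted-basis argument in the two mixed cases. One must verify that the cyclic factor $\mu_k$ (respectively $\mu_n$) is precisely $\tilde H\cap i\R$ modulo $2\pi i\,\Z$, and that the complementary basis vector can be chosen compatibly with the constraint $2\pi i\in\tilde H$. Once that verification is dispatched, the five cases in the statement are read off directly from the exponential correspondence.
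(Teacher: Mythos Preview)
Your argument is the fleshed-out version of what the paper does: the paper's own proof is a single sentence, ``$\C^*$ being a two-dimensional Lie group, the proposition follows from the fact that every closed subgroup of a Lie group is a Lie subgroup,'' and you are supplying the case analysis that this sentence leaves implicit. Lifting through $\exp$ and invoking the structure theorem for closed subgroups of $\R^2$ is exactly the natural way to unpack that sentence, so the approaches agree.

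Two points on the execution. First, the line ``$\tilde H=\{0\}$ gives $H=\{1\}$'' is a slip: since $\tilde H=\exp^{-1}(H)$ always contains $2\pi i\,\Z$, the trivial group never occurs as a preimage; the case $H=\{1\}$ is recovered instead as the rank-one lattice $2\pi i\,\Z$ (your $k=1$). This is harmless.

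Second, and more substantively, in the line-plus-$\Z$ case you write $2\pi i=tv+nw$ and immediately divide by $n$, without excluding $n=0$. When $n=0$ one has $2\pi i\in\R v$, hence $\R v=i\R$; reducing $w$ modulo $i\R$ then gives $\tilde H=i\R\oplus a\Z$ with $a\in\R\setminus\{0\}$, and so $H=\U\times\langle e^a\rangle$, the unit circle times an infinite discrete subgroup of $\R_+^*$. This closed subgroup does not match any of the five clauses as literally written (it is a product of a type-(3) factor with a type-(2) factor, not with a finite group of roots of unity). So your omission is actually detecting a small imprecision in the statement rather than a defect of your method; the paper's subsequent orbit analysis visibly has this case in mind (it is exactly the situation where $|\alpha|$ has discrete nontrivial image while $\arg(\alpha)$ is dense in $\U$). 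Still, your write-up should treat the sub-case explicitly rather than silently dividing by a possibly zero integer.
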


\begin{proof}

$\mathbb{C}^*$ being a two dimensional Lie group, the proposition follows from the fact that every closed subgroup of a Lie group is a Lie subgroup.

\end{proof}

Each subgroup of this list gives rise to an invariant closed subset by the $\Mod$-action. Except for the class of non unitary characters whose modulus has discrete image in $\R_+^*$, we are going to prove that these are the only closed invariant subset of $\Hl$. 

\vspace{2mm}

\paragraph{\bf Other invariant subsets.} In the case where $\mathrm{Im(|\alpha|})$ is discrete and non-trivial, there exists $m>0$ such that the character $\mu \in \mathrm{H}^1(\Sigma,\R)$, which is the lift of $|\alpha|$, has values in $m\Z$. Let $\tilde{\theta} \in \mathrm{H}^1(\Sigma,\R)$ be a lift of $\theta = \mathrm{arg}(\alpha)$. Such a lift is unique up to an element of $\mathrm{H}^1(\Sigma, \Z)$. The value $v = \mu \wedge \tilde{\theta}$ modulo $m\Z$(which we denote by $\mu \wedge \theta$) is therefore invariant of the $\Mod$-action. The level sets of the function $\mu \wedge \cdot$ being closed, they therefore define closed invariant subset for the action of the mapping class group. Such a set is denoted by $\mathrm{H}_{m,v}$.
\noindent We are going to prove that the closed invariant subset that we have just described are the only one. More precisely:

\begin{prop}

\begin{itemize}

\item If $\mathrm{Im}(|\alpha|)$ is discrete and non trivial and is equal to $\langle e^{m} \rangle$ with $m>0$, and that $\mu \wedge \mathrm{arg}(\alpha) = v$, then $\overline{\Mod \cdot \alpha} = \mathrm{H}_{m,v}$, where $\mathrm{H}_{m,v}$ is the set of characters whose modulus has image $\langle e^{m} \rangle$, and for which the cup product of the lift of their modulus and their argument is $v \in \mathbb{R}/m\Z$.

\item If $\alpha$ is unitary or if  $\mathrm{Im}(|\alpha|)$ is not discrete, then $\overline{\Mod \cdot \alpha}$ is the set of characters $\beta$ such that $\mathrm{Im}(\beta) \subset \overline{\mathrm{Im}(\alpha)}$.

\end{itemize}

\end{prop}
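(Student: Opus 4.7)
The plan is to follow Kapovich's strategy for Haupt's theorem: first verify that the proposed invariants are indeed $\Mod$-invariants (giving the inclusion $\overline{\Mod\cdot\alpha}\subseteq H_{m,v}$ and likewise in the second bullet), then establish density using symplectic transvections and the parabolic structure of stabilizers in $\Sp$. Throughout I identify $\Hl \cong \mathrm{H}^1(\Sigma,\R) \oplus \mathrm{H}^1(\Sigma,\R/\Z)$ via $\C^* \cong \R \times \R/\Z$, so that the $\Mod$-action becomes the diagonal symplectic action on $\R^{2g} \oplus (\R/\Z)^{2g}$.

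For the invariance step in the first bullet, invariance of $m$ is immediate from $\mathrm{Im}(|\alpha \circ \phi|) = \mathrm{Im}(|\alpha|)$. For $v = \mu \wedge \tilde\theta \pmod{m\Z}$, naturality of the cup product together with the fact that $\phi \in \Mod$ preserves the fundamental class of $\Sigma$ yields $\phi^*(\mu \wedge \tilde\theta) = \mu \wedge \phi^*\tilde\theta = \mu \wedge \tilde\theta$ whenever $\phi$ stabilizes $\mu$; the ambiguity in $\tilde\theta$ modulo $\mathrm{H}^1(\Sigma,\Z)$ shifts $v$ by elements of $\mu \cup \mathrm{H}^1(\Sigma,\Z) \subseteq m\Z$, which vanish in $\R/m\Z$. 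For the second bullet, the inclusion $\mathrm{Im}(\alpha \circ \phi) \subseteq \mathrm{Im}(\alpha)$ together with continuity of the image-subgroup operation yields $\overline{\Mod\cdot\alpha} \subseteq \{\beta : \mathrm{Im}(\beta) \subseteq \overline{\mathrm{Im}(\alpha)}\}$.

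For density I would use the transitivity of $\Sp$ on primitive elements of $\mathrm{H}^1(\Sigma,\Z)$ to put $\mu$ in normal form $m a_1^*$. The stabilizer of $m a_1^*$ in $\Sp$ is a maximal parabolic subgroup whose Levi factor is an embedded $\mathrm{Sp}(2g-2,\Z)$ acting on the block $\langle a_2,b_2,\ldots,a_g,b_g\rangle$, and whose unipotent radical is generated by symplectic transvections $T_v$ with $v \in \ker\mu$. The Levi factor acts on the coordinates $(\theta_{a_i}, \theta_{b_i})_{i \geq 2} \in (\R/\Z)^{2g-2}$, where density follows by an inductive application of the second bullet. The transvection $T_{b_1}$ and its analogues produce orbits of the form $\theta \mapsto \theta + n\,\omega(\cdot, v)\,\theta(v)$; combined with the Levi action, these give density along the $\theta_{a_1}$-direction and mix it with the other coordinates, with $\theta_{b_1} = v/m$ pinned in $\R/\Z$, precisely recording the $v$-invariant. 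The non-discrete modulus case of the second bullet follows from a parallel argument augmented by the classical density of $\Sp$-orbits of irrational points in $\R^{2g}$ (open affine subspaces determined by the $\Q$-rank of $\mu$).

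The main obstacle I expect is density in the subcases where individual transvections fail to produce irrational translations, for instance when $v/m$ happens to be rational or when $\alpha$ restricts to a torsion character on the Levi block. Resolving these requires either explicit combinations of several transvections with incommensurable translation coefficients, or an appeal to a Ratner-type equidistribution theorem for unipotent subgroups of $\Sp$ acting on the relevant affine tori. The genericity conditions needed for a clean density statement should align with the exclusion of low-order root-of-unity linear parts that appears in the main theorem.
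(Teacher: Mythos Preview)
Your approach differs substantially from the paper's. The paper does not attempt a hands-on density argument via transvections and Levi decompositions; instead it recognises $\R^{2g}\times\R^{2g}$ as the homogeneous space $(\Sr\ltimes\R^{2g})/U$ for the stabiliser $U$ of a point, observes that $\Sp\ltimes\Z^{2g}$ is a lattice in $\Sr\ltimes\R^{2g}$, and invokes Ratner's theorem directly. The remaining work is then purely group-theoretic: one classifies the closed subgroups $H$ with $gUg^{-1}\subset H\subset \Sr\ltimes\R^{2g}$ such that $H\cap(\Sp\ltimes\Z^{2g})$ is a lattice in $H$. The paper shows any such $H$ splits as $A\ltimes B$ with $A\in\{\Stab(\vec x),\,\Sr\}$ and $B\in\{0,\,\R\vec x,\,\vec x^{\perp},\,\R^{2g}\}$, and then matches each of the resulting finitely many possibilities against the orbit-closure types in the statement. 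This bypasses entirely the delicate case analysis you anticipate in your final paragraph: Ratner's theorem guarantees from the outset that the closure is a homogeneous set, so one never has to produce density by hand.

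Your invariance step is fine, but the density step is incomplete exactly where you flag it, and your proposed resolution is off. The situations in which individual transvections yield only rational translations are not peripheral edge cases to be handled by a genericity hypothesis; they correspond to genuinely smaller orbit closures (for instance discrete orbits when $\mathrm{Im}(\alpha)$ itself is a discrete subgroup of $\C^*$), and an elementary approach would have to \emph{detect and classify} these rather than exclude them. More importantly, your closing remark that the needed genericity conditions should align with the root-of-unity exclusions $n=2,3,4,6$ of the main theorem is a misreading: those exclusions arise from the Torelli group action on the fibres $\mathbf{P}(\Ht)$, not from the $\Mod$-action on the base $\Hl$, and the proposition at hand carries no such exceptions.
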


The remainder of the Section will be devoted to its proof.

\subsection{Ratner's theorem.}

We are interested in classifying the possible orbit closures of an element of $\mathbb{R}^{2g} \times \mathbb{R}^{2g}/ \Z^{2g}$ under the $\Sp$-action. To that purpose, we remark that a subset $A$ of $\mathbb{R}^{2g} \times \mathbb{R}^{2g}$ is invariant under the action of $\Sp \ltimes \Z^{2g}$ if and only if it projection $p(A)$ on $\mathbb{R}^{2g} \times \mathbb{R}^{2g}/ \Z^{2g}$ is invariant under the $\Sp$-action. 

\vspace{2mm}

We can also remark that since $G = \Sr \ltimes \mathbb{R}^{2g}$ acts transitively on $X = \mathbb{R}^{2g} \times \mathbb{R}^{2g}$, there is a natural identification between $X$ and $G/U$ where $U$ is the stabilizer in $G$ of a point in $X$. Since $\Gamma = \Sp \ltimes \Z^{2g}$ is a lattice in $G$, we are going to be able to apply the Ratner's powerful theorem to our setting.  Rephrased in our specific context, it can be stated as follow:

\begin{thm}[Ratner, \cite{Ratner}]
\label{ratner}
Let $G, U$ and $\Gamma$ be as above and $p \in X = G/U$ such that $p =gU$. Then there exists a closed subgroup $H_g < G $ such that 

\begin{itemize}

\item $U^g = gUg^{-1} \subset H_g$; \sk

\item $\Gamma \cap H_g$ is a lattice in $H_g$; \sk

\item $\overline{\Gamma \cdot p} = \Gamma H p$.

\end{itemize}

\end{thm}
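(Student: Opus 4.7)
The plan is to reduce this to Ratner's orbit closure theorem for unipotent flows, since the statement is essentially the translation of that theorem into our setting. The first step is the standard duality: a $\Gamma$-orbit on $G/U$ of the point $p = gU$ corresponds, under the projection $G \to \Gamma \backslash G$, to the $U$-orbit of the class $\Gamma g$ acting on the right. Orbit closures correspond as well, so the assertion $\overline{\Gamma \cdot p} = \Gamma H_g p$ is equivalent to $\overline{\Gamma g \cdot U} = \Gamma g \cdot U^g \subset \Gamma \backslash G$ having homogeneous closure $\Gamma g \cdot H_g$ for some closed $H_g \supset U^g$.

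Next I would verify that the setup meets the hypotheses of Ratner's theorem, namely that $U^g$ is Ad-unipotent (or at least generated by one-parameter Ad-unipotent subgroups). In the present situation $G = \Sr \ltimes \R^{2g}$ acts on $X = \R^{2g} \times \R^{2g}$ and the stabilizer of a point $(v_0,w_0)$ consists of pairs $(A,t)$ with $Av_0 = v_0$ and $t = w_0 - Aw_0$; the stabilizer in $\Sr$ of a nonzero vector is a parabolic subgroup whose unipotent radical combines with the translation part to give $U$ an Ad-unipotent structure (the edge cases where $v_0$ or $w_0$ vanishes are handled by taking $U$ to include the full $\R^{2g}$ factor). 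Since $\Gamma = \Sp \ltimes \Z^{2g}$ is a lattice in $G$, Ratner's theorem then applies directly to produce the closed subgroup $H_g$ with $\Gamma \cap H_g$ a lattice in $H_g$, and with the closure of the $U^g$-orbit through $\Gamma g$ equal to $\Gamma g \cdot H_g$.

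Finally I would dualise back: the equality $\overline{\Gamma g \cdot U^g} = \Gamma g \cdot H_g$ in $\Gamma \backslash G$ lifts to the equality $\overline{\Gamma \cdot gU} = \Gamma H_g \cdot gU$ in $G/U$, which is the statement. The containment $U^g \subset H_g$ is automatic from the construction, and the lattice property is what Ratner's theorem actually guarantees.

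The hard part is of course Ratner's theorem itself, which we invoke as a black box. Its proof proceeds by first establishing the measure classification theorem for $U$-invariant ergodic probability measures on $\Gamma \backslash G$ (using polynomial divergence of unipotent orbits to show that any such measure is algebraic, i.e. the Haar measure on a closed orbit of an intermediate subgroup), and then deducing the orbit closure statement by a genericity argument combined with ergodic decomposition. Reproducing this machinery is well beyond the scope of a sketch; the content of our statement is precisely that the general theorem applies to the specific pair $(G,\Gamma) = (\Sr \ltimes \R^{2g},\ \Sp \ltimes \Z^{2g})$ and the stabilizers $U$ arising from the action on $X$.
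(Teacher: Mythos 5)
The paper gives no proof of this statement: it is quoted as Ratner's theorem specialised to $(G,\Gamma,U)=(\Sr\ltimes\R^{2g},\ \Sp\ltimes\Z^{2g},\ \Stab(p))$, and your sketch --- dualising $\Gamma$-orbits on $G/U$ to $U^g$-orbits on $\Gamma\backslash G$, checking that the stabiliser is generated by Ad-unipotent one-parameter subgroups, and invoking the orbit-closure theorem as a black box --- is exactly the standard justification the citation presupposes. Your description of $U'=\Stab(\vec e)$ as ``a parabolic subgroup'' is slightly loose (it is the non-reductive group $\mathrm{Sp}(2g-2,\R)$ extended by a Heisenberg unipotent radical, sitting inside a maximal parabolic), but the conclusion that it is generated by unipotents, hence that Ratner applies, is correct and consistent with the paper.
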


The first step towards the orbit classification of the $\Mod$-action on $\Hl$ is to list those subgroups $H < \Sr \ltimes \R^{2g}$ which contains $U = U' \times \{0\}$ where $U' < \Sr$ is the stabiliser of the point $\vec{e} = \tiny{ \begin{pmatrix} 
 1 \\ 
 0 \\
 \vdots \\
 0  
 \end{pmatrix}}$.

\vspace{3mm}

We fix $p =(\vec{x}, \vec{u})\in \R^{2g}\times \R^{2g}/ \Z^{2g} $ and $g$ being such that $p =gU$ or in other word $(\vec{x}, \vec{u}) = g \cdot (\vec{e}, \vec{0})$. Let $H_g$ be the group given by Ratner's theorem. Denote by $A_g< \Sr$ the image of $H_g$ under the projection $ \Sr \ltimes \R^{2g} \longrightarrow \Sr$ and $B_g < \R^{2g}$ the kernel of this projection. We first prove two lemmas that will help us traduce the two first conditions of Theorem \ref{ratner} in terms of $A_g$ and $B_g$.

\begin{lemma}

$B_g$ is either $\vec 0$, $\R\vec x$, $\vec x^{\perp}$ or $\R^{2g}$.

\end{lemma}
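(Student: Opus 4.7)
The plan is first to identify the constraint that $B_g$ is normal in $H_g$, being the kernel of the projection $H_g \twoheadrightarrow A_g$. Therefore the conjugation action of $H_g$ on $B_g$ descends to an action of $A_g$ on $B_g$. A direct computation in the semidirect product $\Sr \ltimes \R^{2g}$ shows that, under the identification of the kernel of $\Sr \ltimes \R^{2g} \to \Sr$ with $\R^{2g}$, this conjugation coincides with the restriction to $B_g$ of the standard linear $\Sr$-action on $\R^{2g}$. In particular, $B_g$ is a closed additive subgroup of $\R^{2g}$ invariant under the linear action of $A_g$.

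Second, I would exploit the inclusion $U^g \subset H_g$ guaranteed by Ratner's theorem. A short computation shows that the projection of $U^g$ to $\Sr$ is exactly the stabilizer $\Stab_{\Sr}(\vec x)$, so that $\Stab_{\Sr}(\vec x) \subset A_g$. Hence $B_g$ is a closed $\Stab_{\Sr}(\vec x)$-invariant additive subgroup of $\R^{2g}$, and the core of the proof reduces to classifying such subgroups.

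Third, I would analyze the $\Stab_{\Sr}(\vec x)$-action on $\R^{2g}$. This group fixes the line $\R\vec x$ pointwise, preserves its symplectic orthogonal $\vec x^\perp$ (which contains $\R\vec x$, since $\omega(\vec x,\vec x)=0$), acts trivially on the one-dimensional quotient $\R^{2g}/\vec x^\perp$, and acts on the symplectic quotient $\vec x^\perp/\R\vec x$ as the full standard representation of $\mathrm{Sp}(2g-2,\R)$, which is irreducible for $g\geq 2$. Combining these, the only linear subspaces of $\R^{2g}$ invariant under $\Stab_{\Sr}(\vec x)$ are $\{\vec 0\}$, $\R\vec x$, $\vec x^\perp$ and $\R^{2g}$. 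In particular, the identity component $V$ of $B_g$ is one of these four subspaces.

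The main obstacle I anticipate is ruling out a nontrivial discrete contribution beyond the identity component: a priori $B_g$ could be of the form $V \oplus \Lambda$ for $\Lambda$ a discrete lattice in some complement of $V$. Since $\Stab_{\Sr}(\vec x)$ is connected, its action on the discrete quotient $B_g/V$ must be trivial, forcing $\Lambda$ into the fixed locus of $\Stab_{\Sr}(\vec x)$ modulo $V$; in all three non-full cases this locus is at most one-dimensional. I would then eliminate this residual $\Lambda$ either by invoking the convention in the Ratner setup that $H_g$ is generated by unipotent one-parameter subgroups (which makes $H_g$, and hence $B_g$, connected), or by using the lattice condition $\Gamma\cap H_g < H_g$ to show that a purely discrete piece in $B_g$ cannot coexist with the required cocompactness. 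Either way, this forces $B_g = V$ and concludes the proof.
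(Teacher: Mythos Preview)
Your approach is essentially the same as the paper's: both observe that $B_g$ is invariant under the linear action of $\Stab_{\Sr}(\vec x)$ (the paper phrases this as invariance under conjugation by $U_g$, whose projection to $\Sr$ is exactly $\Stab(\vec x)$) and then classify the invariant subspaces. The paper argues the classification by transitivity---$\Stab(\vec x)$ acts transitively on lines outside $\vec x^{\perp}$ and on vectors in $\vec x^{\perp}\setminus\R\vec x$---whereas you use the filtration $\R\vec x \subset \vec x^{\perp}$ together with irreducibility of the $\mathrm{Sp}(2g-2,\R)$-action on the middle quotient; both are standard and equivalent.

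One point where you are more careful than the paper: the paper tacitly treats $B_g$ as a linear subspace from the outset, while you correctly note that a priori it is only a closed subgroup of $\R^{2g}$ and could have a discrete piece. Your proposed fix---that in Ratner's theorem $H_g$ is generated by unipotent one-parameter subgroups, hence connected, hence $B_g$ is connected---is the right way to close this gap.
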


\begin{proof} Remark that $B_g$ is invariant under the action of $U_g$ by conjugation (which is nothing but the natural linear action of $U_g$). Let $V$ be an invariant subvector space of $\R^{2g}$, four possibilities can occur:

\begin{enumerate}

\item $V = \vec{0}$; \sk

\item $V = \vec{x}$;

\item $V$ contains $\vec y$ such that $\vec{x} \wedge \vec y \neq 0$. $U_g$ acts transitively on the set of lines on which $\vec{x} \wedge \cdot $ is non zero, which is dense in $\R^{2g}$ and therefore  $V = \R^{2g}$.

\item $V \subset \vec x^{\perp}$ and  $V \neq \vec{x}$. Since $U_g$ acts transitively on vectors orthogonal to $\vec{x}$  which are not on the line directed by $\vec{x}$, $V =  \vec x^{\perp}$.

\end{enumerate}

\end{proof}
\begin{lemma}
\label{volume}
Let $H $ be a closed Lie subgroup of $G = \Sr \ltimes \R^{2g}$. Then $\Sp \ltimes \Z^{2g} \cap H$ is a lattice in $H$ if and only if $\Sp\cap A$ is a lattice in $A$ and $\Z^{2g} \cap B$ is a lattice in $B$.

\end{lemma}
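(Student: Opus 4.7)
I would base the argument on the short exact sequence $1 \to B \to H \to A \to 1$ obtained by restricting the projection $G = \Sr \ltimes \R^{2g} \to \Sr$ to $H$, combined with the standard multiplicativity of Haar measure across a group extension. Set $\Lambda = (\Sp \ltimes \Z^{2g}) \cap H$ and let $\Lambda_A$ denote its image in $A$. Directly from the definitions, $\Lambda \cap B = \Lambda \cap \R^{2g} = \Z^{2g} \cap B$, while $\Lambda_A \subset \Sp \cap A$, which is discrete (hence closed) in $A$ since $\Sp$ is discrete in $\Sr$. The classical lattice-in-an-extension lemma then asserts that $\Lambda$ is a lattice in $H$ if and only if $\Lambda \cap B$ is a lattice in $B$ and $\Lambda_A$ is a lattice in $A$; the underlying fact is the volume formula
\[
\mathrm{vol}(H/\Lambda) \;=\; \mathrm{vol}(A/\Lambda_A) \cdot \mathrm{vol}(B/(\Lambda \cap B))
\]
coming from the fibration $H/\Lambda \to A/\Lambda_A$ with fiber $B/(\Lambda \cap B)$.

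The necessity direction is then immediate: $\mathrm{vol}(H/\Lambda)<\infty$ forces both factors to be finite, so $\Z^{2g} \cap B$ is a lattice in $B$ and $\Lambda_A \subset \Sp \cap A$ is a lattice in $A$; a discrete overgroup of a lattice being itself a lattice, $\Sp \cap A$ is a lattice in $A$.

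For the sufficiency direction, the assumption that $\Z^{2g} \cap B$ is a lattice in $B$ forces $B$ to be the $\R$-span of integer vectors, hence a rational subspace of $\R^{2g}$; consequently $B + \Z^{2g}$ is closed and $T := \R^{2g}/(B+\Z^{2g})$ is a compact torus. For each $a \in \Sp \cap A$ pick $v_a \in \R^{2g}$ with $(a,v_a) \in H$; the class $\phi(a) \in T$ of $v_a$ is well-defined and its vanishing locus is exactly $\Lambda_A$, so $(\Sp \cap A)/\Lambda_A$ injects into $T$. The remaining task, and the main obstacle, is to show that this injection has finite image, so that $\Lambda_A$ is of finite index in the lattice $\Sp \cap A$ and therefore is itself a lattice in $A$. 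I would leverage here that $\Sp \cap A$ is Zariski-dense in the Zariski closure of $A$ (Borel density, since it is already known to be a lattice) together with the rationality of $B$, in order to produce $u \in \R^{2g}$ with $(1,u)^{-1} H (1,u) \subset A \ltimes B$, which trivialises the cocycle $\phi$ on a finite-index subgroup of $\Sp \cap A$ and recovers $\Lambda_A$ as a lattice in $A$. This rationality / conjugation step is the delicate part of the argument.
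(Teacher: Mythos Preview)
Your approach via the short exact sequence $1 \to B \to H \to A \to 1$ and the multiplicativity of Haar volume is exactly the paper's approach: the paper writes down the projection $H/(H\cap\Gamma) \to A/(\Sp \cap A)$, asserts that its fiber is $B/(\Z^{2g}\cap B)$, and concludes with the product formula for the volume. You are in fact more careful than the paper, since you correctly distinguish the image $\Lambda_A$ of $H\cap\Gamma$ in $A$ from the a~priori larger group $\Sp \cap A$; the paper silently identifies the two, which is precisely why its description of the fiber is inaccurate in general. Your necessity direction is correct and cleaner than the paper's.

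The sufficiency direction, however, is genuinely problematic --- not because your sketch is incomplete, but because the statement is \emph{false} as written. Take $g=1$, an irrational $\alpha$, $v=(\alpha,0)\in\R^2$, and
\[
H \;=\; (1,-v)\bigl(\mathrm{SL}(2,\R)\times\{0\}\bigr)(1,v) \;=\; \{(a,(a-I)v) : a \in \mathrm{SL}(2,\R)\}.
\]
Then $A = \mathrm{SL}(2,\R)$ and $B=\{0\}$, so $\Sp\cap A = \mathrm{SL}(2,\Z)$ is a lattice in $A$ and $\Z^2\cap B = \{0\}$ is a lattice in $B$. But $H\cap\Gamma$ consists of those $a\in\mathrm{SL}(2,\Z)$ with $(a-I)v\in\Z^2$, which forces $a_{11}=1$, $a_{21}=0$; this is an infinite cyclic unipotent group, hence not a lattice in $H\simeq\mathrm{SL}(2,\R)$. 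Your proposed conjugation $(1,u)^{-1}H(1,u) \subset A\ltimes B$ does exist here (with $u=-v$), but it carries $\Gamma$ to $(1,u)^{-1}\Gamma(1,u)$, which is not commensurable with $\Gamma$ when $u\notin\mathbb{Q}^{2g}$; this is exactly where your Borel-density/rationality step would collapse, and no repair is possible.

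In the paper's application only the necessity direction is ever used: Ratner's theorem supplies $H\cap\Gamma$ as a lattice in $H$, and one wants to extract constraints on $A$ and $B$. So the defect of the ``if'' direction is harmless for the sequel, and your treatment of the direction that matters is perfectly adequate.
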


\begin{proof} Denote by $\pi$ the natural projection:

$$  H/ (H \cap \Gamma) \longrightarrow A / \Sp\cap A $$

\noindent This projection is a fiber bundle whose fiber is isomorphic to $B / (\Z^{2g} \cap B)$. The Haar measure on $H$ is the product of the Lebesgue measure on the fiber and the Haar measure $A / (A \cap \Sp)$. Therefore the volume of $H$ is exactly $\mathrm{vol}(A / (A \cap \Sp)) \cdot \mathrm{vol}(B / (\Z^{2g} \cap B))$.

\end{proof}

According to the previous lemma, if $H_g$ associated to the orbit closure of $p \in X$,  $A_g$ is unimodular and contains $\pi(U_g)$ which is $\mathrm{Stab}(\vec{x})$. According to \cite{Kapovich} (or \cite{Deroin} for a clearer exposition of the arguments),  $H_g$ is either $U_g$ or $\Sr$. We deduce from the two lemmas above the following proposition.

\begin{prop}

$$ H_g =  A_g \ltimes B_g $$ 

\end{prop}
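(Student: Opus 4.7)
The plan is to split the short exact sequence
$$1 \longrightarrow B_g \longrightarrow H_g \longrightarrow A_g \longrightarrow 1$$
by exhibiting an explicit section $A_g \to H_g$ inside $G = \Sr \ltimes \R^{2g}$, and then read off the semidirect product structure. I treat separately the two possibilities for $A_g$ given by the Kapovich dichotomy quoted just above the statement (which, as one sees from context, is really a statement about $A_g$): either $A_g = \pi(U_g) = \Stab(\vec x)$, or $A_g = \Sr$.

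In the first case, writing $g = (A, \vec u)$ with $A \vec e = \vec x$, the group law of $G$ gives the explicit description
$$U_g = g U g^{-1} = \{(k, (I-k)\vec u) \ : \ k \in \Stab(\vec x)\},$$
and the assignment $\sigma \colon k \mapsto (k, (I-k)\vec u)$ is a group homomorphism from $A_g = \Stab(\vec x)$ into $G$, via the cocycle identity $I - k_1 k_2 = (I - k_1) + k_1 (I - k_2)$. Ratner's first condition $U^g \subset H_g$ says precisely that $\sigma$ lands inside $H_g$, so $\sigma$ is a section of the projection $H_g \to A_g$. For any $(k,w) \in H_g$, the element $\sigma(k)^{-1}(k,w)$ lies in $H_g \cap \ker \pi = B_g$, whence $H_g = \sigma(A_g) \cdot B_g$; since $\sigma(A_g) \cap B_g = \{e\}$, this exhibits $H_g$ as an internal semidirect product. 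The induced conjugation action of $\sigma(A_g)$ on $B_g$ reduces to the standard linear action of $A_g$ on $B_g \subset \R^{2g}$, which is well-defined because the first lemma above pins $B_g$ down as one of the $\Stab(\vec x)$-invariant subspaces $\{0\}, \R \vec x, \vec x^{\perp}, \R^{2g}$.

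In the second case $A_g = \Sr$, observe that $B_g$ is normal in $H_g$ (kernel of a projection), hence stable under conjugation by $A_g$, i.e., under the full linear $\Sr$-action on $\R^{2g}$. Irreducibility of $\R^{2g}$ as an $\Sr$-module forces $B_g \in \{0, \R^{2g}\}$. If $B_g = \R^{2g}$ then $H_g$ contains $\ker \pi$ and surjects onto $\Sr$, so $H_g = G = \Sr \ltimes \R^{2g} = A_g \ltimes B_g$. If $B_g = 0$ then $\pi \colon H_g \to A_g$ is an isomorphism and the sequence splits trivially.

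The only mildly delicate step is the verification in the first case that $\sigma$ takes values in $H_g$; once Ratner's condition $U^g \subset H_g$ is invoked, the rest is routine algebra in the semidirect product $G$, combined with the invariance statements from the two preceding lemmas.
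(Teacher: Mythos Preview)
Your handling of the case $A_g = \Stab(\vec x)$ is correct and in fact more explicit than the paper's (the paper normalises to $p=(\vec e,\vec 0)$ and dispatches this case in one line, since then $U = \Stab(\vec e)\times\{0\}$ is already the desired section). Your observation that $B_g\in\{0,\R^{2g}\}$ when $A_g=\Sr$, via irreducibility of the standard representation, is also fine, as is the subcase $B_g=\R^{2g}$.

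The gap is in the subcase $A_g=\Sr$, $B_g=0$, which is in fact where almost all of the paper's work goes. Saying ``$\pi\colon H_g\to A_g$ is an isomorphism and the sequence splits trivially'' establishes only that $H_g\cong\Sr$ as an abstract group. What the proposition asserts (and what is used immediately afterwards to list the five possible $H_g$'s as explicit subgroups of $G$) is that $H_g=\Sr\ltimes\{0\}$ \emph{as a subset of $G$}. Concretely, the inverse $\pi^{-1}\colon\Sr\to H_g\subset G$ has the form $M\mapsto (M,\varphi(M))$ for some continuous crossed homomorphism $\varphi\colon\Sr\to\R^{2g}$ vanishing on $U$, and the content of the proposition in this case is precisely that $\varphi\equiv 0$. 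The paper proves this by a direct argument: first $\varphi$ descends to $\Sr/U\simeq\R^{2g}\setminus 0$, then one shows $\varphi(\vec y)\in\mathrm{Span}(\vec e,\vec y)$ using suitable elements of $U$, and finally one bootstraps to $\varphi\equiv 0$. The extension to general $B_g$ is then obtained by running the same argument with values in $\R^{2g}/B_g$. You have supplied none of this; your ``trivial splitting'' is the tautology $H_g=\pi^{-1}(\Sr)$ and says nothing about which copy of $\Sr$ inside $G$ one obtains. Note, incidentally, that even appealing to $H^1(\Sr,\R^{2g})=0$ would only give $\varphi(M)=(I-M)\vec w$ for some $\vec w$; the vanishing on $U$ then forces $\vec w\in\R\vec e$, but one still needs an argument to conclude $\vec w=0$.
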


\begin{proof}

Without loss of generality we can assume that $p = (\vec{e}, \vec{0}) $ and $U = \mathrm{Stab}(p)$. If $A_g = U$ then it is obvious (because $H_g$ contains $U \ltimes \{\vec{0} \}$). We assume from now on that $A_g = \Sr$. We are first going to do the proof in the case when $B = \vec 0$. In this case the projection $ \Sr \ltimes \R^{2g} \longrightarrow \Sr$ is one-to-one and its inverse defines by projecting on the $\R^{2g}$ factor a continuous function $ \varphi : \Sr \longrightarrow \R^{2g} $ such that $$ \forall A, B \in \Sr,  \ \varphi(AB) = \varphi(A) + A \varphi(B) $$ \noindent Proving the proposition in this specific case is equivalent to prove that $\varphi$ vanishes everywhere.  Since $\varphi$ vanishes on $U$, it defines $ \varphi : \Sr / U \simeq \R^{2g} \setminus 0 \longrightarrow \R^{2g} $ such that 

$$ \forall A \in \Sr \ \text{and} \ \forall  y \in \R^{2g} \setminus 0, \ \varphi(Ay) = \varphi(A_1) + A \varphi(y) $$ \noindent and $\varphi(\vec{e}) =0$, where $A_1$ denotes the first column of the matrix $A$. Considering $\vec z$ such that $\vec z \wedge p \neq 0$. One can find a element $A \in U$ such that $\mathrm{Ker}(A - \mathrm{Id}) = \mathrm{Span}(\vec y, \vec{e})$. Since $\varphi(A) = 0$, $\varphi(\vec y)$ must be a fixed point of $A$ and therefore lies in $ \mathrm{Span}(\vec y, \vec{e})$. Since the set of $\vec y$ such that $\vec z \wedge \vec y \neq 0$ is dense, every $\varphi(\vec y)$ lies in $\mathrm{Span}(\vec y, \vec{e})$.

\noindent We want to show that $\varphi(\vec y) = 0$ for all $\vec y$. Consider $A \in \Sr$ such that $A\vec{e} = \vec y$, \textit{i.e.} $A_1 = \vec y$. Consider $\vec z$ such that $A\vec z$ does not belong to $ \mathrm{Span}(\vec e, \vec{y})$. On one hand 

$$ \varphi(A\vec z) = \varphi(\vec{y}) + A \varphi(\vec z) $$

\noindent but also

$$ \varphi(A\vec z) = \lambda A \vec z + \mu \vec{e} .$$

This implies that $\varphi(\vec{y})$ is a multiple of $\vec{e}$, which holds for all $\vec y$. But then $\varphi(A \vec y) = \varphi(A_1) + A \varphi(\vec y)$ and therefore $\varphi(\vec y)$ must be zero because $\varphi(A \vec y)$ is also a multiple of $\vec{e}$. 
\\ When $B_g \neq 0$, one can still play the same game. Notice that the set of $\vec{z}$ such that $(A,\vec{z})$ belongs to $H_g$ is an affine subspace of tangent subspace $B_g$ and we can still define $ \varphi : \Sr \longrightarrow \R^{2g}/B_g$ which vanishes on $U_g$. A similar argument left to the reader gives that $\varphi$ vanishes everywhere and therefore proves the proposition.
\end{proof}

We deduce then from the lemma above that a subgroup of $G$ which contains $U_g \ltimes \{ \vec{0}\}= \Stab(p)$ with $p=(\vec{x}, \vec{u})$ is either $U_g \ltimes \{0\}$, $U_g \ltimes \R \vec{x}$, $U_g \ltimes \vec{x}^{\perp}$, $\Sr \ltimes \{0\} $ or $\Sr \ltimes \R^{2g}$. For the sake of the exposition, we will denote in what follows by $H_0 = A_0 \ltimes B_0$ an arbitrary subgroup of the list for $\vec{x} = \vec{e}$.

\subsection{Orbit closures classification.}
\label{orbitclosure}
We are now set to classify the orbit closures of the action of $\Gamma$ on $\Hl = \mathbb{R}^{2g} \times \mathbb{R}^{2g}/ \Z^{2g}$. We consider $\tilde{p} = (\vec{x}, \vec{u}) \in \R^{2g} \times \R^{2g}$ representing a point $p\in \Hl$, and we make the extra assumption that $\vec x \neq 0$. In this case, we can apply Ratner's theorem to $\tilde{p} =gU$ where $ g: \vec z \mapsto C\vec z + \vec u $ and $C$ is such that its first column is $\vec{x}$. For one of the subgroups $H_0 = A_0 \ltimes B_0$ listed above, we have that $\overline{\Gamma\tilde{p}} = \Gamma gH_0g^{-1} \tilde{p}$ and that $\Gamma \cap  gH_0g^{-1}$ is a lattice in $gH_0g^{-1} = H_g$.
\noindent The conjugation map $H_0 \longrightarrow H_g$ writes down explicitly 
$$ (M, \vec y) \longmapsto (CMC^{-1}, \vec u - CMC^{-1} \vec u + C \vec y). $$ 

\noindent Therefore $U_g = gUg^{-1} = \mathrm{Stab}(\vec{x})$ and $B_g = C\cdot B_0$. We now classify orbits depending on whether $A_g = \mathrm{Stab}(\vec{x})$ or $\Sr$.
\vspace{3mm}
\paragraph{  \it 1) If $A_g= \mathrm{Stab}(\vec x)$.}

\vspace{2mm} Since $\Sp$ must be a lattice in $A_g$ (see Lemma \ref{volume}), $\vec{x}$ must belong to a rational line, \textit{i.e.} there exists $\vec{x'} \in \mathbb{Z}^n$ such that $\vec x = \lambda \vec{x'}$ for a constant $\lambda > 0$. This is equivalent to say that the absolute value of elements of $\mathrm{Im}(p)$ is the (discrete) group spanned by $e^{\lambda}$ if $\vec x'$ has been chosen primitive. In this case four possibilities can occur:

\begin{itemize}

\item $B_g = 0 $. In that case $\overline{\gamma \tilde{p}} = \gamma \tilde{p} $  which is equivalent to the fact that $\mathrm{Im}(p)$ is a discrete subgroup of $\C^*$. 

\item $B_g = \R \vec x$. Since $A_g = \mathrm{Stab}(\vec x)$, the image in $\R^{2g} \times \R^{2g}$ of $H_g \cdot p$ is equal to $\{ \vec{x}  \} \times \{ \Stab(\vec{x})\vec{u} + \R \vec{x} \} = \{ \vec{x}  \} \times \{ \vec{u} + \vec{x}^{\perp} \}$.  If $\vec{u}$ does not belong to $\R \vec{x}$, then $H_g \cdot p = \{ \vec{x}  \} \times \{ \vec{y} \ | \ \vec{y} \wedge \vec{x} =   \vec{u} \wedge \vec{x} \} $ and thus $\Gamma\cdot p = H_{\lambda,v}$ where $v= \vec{u} \wedge \vec{x} \ \mathrm{mod} \lambda $.
\noindent If $\vec{u} \in \R \vec{x}$ it implies that $\mathrm{Im}(p)$ is a discrete subgroup of the form $\{  e^{n\lambda + n2i\pi \mu}  \ | \ n \in \Z  \}$ where $\mu$ is irrational.

\item $B_g = \vec{x}^{\perp}$ works the exact same way than the previous case, one remarks that the projection $H_g \longrightarrow \Hl$ maps to $\{ \vec{x} \} \times \{ \vec{u} + \vec{x}^{\perp} \}$ and therefore the same conclusion holds.

\item The last remaining subcase $B_g = \R^{2g}$ cannot occur since we already proved that the closure of the orbit of $p$ must be contained in an invariant subset of the type $H_{m,v}$.
\end{itemize}

So far we have classified all possible orbit closures of $p$ in the case when the image its modulus is a given discrete subgroup of $\mathbb{R}^*$. Three situations can occur:

\begin{enumerate}

\item$p$ has discrete image and its image is isomorphic to $\Z \times \Z/n\Z$ through $(k,k') \mapsto e^{kz_0 + \frac{2i\pi k'}{n}} $ for a certain $z_0 \in \mathbb{C}^*$; \sk

\item its rotational part  has not discrete image in $\mathbb{U}$ and in this case its orbit closure is the set $\mathrm{H_{\lambda,v}}$; \sk

\end{enumerate}

\paragraph{ \it 2) If $A_g= \Sr$.} In this case, $B_0$ is either $\{ 0 \}$ or $\R^{2g}$. One easily deduce from the fact that $A_g = \Sr$ that the modulus of $p$ has dense image in $\R^*_+$.  Using similar arguments to case $1)$, we find that:

\begin{enumerate}

\item either the rotational part of $p$ has finite image $\langle e^{\frac{2i\pi}{n}} \rangle $, in this case $\overline{\mathrm{Im}(p)} = \R^*_+ \times \langle e^{\frac{2i\pi}{n}} \rangle$ and the closure of the orbit of $p$ is the set of elements whose closure of the image is $L\times \langle e^{\frac{2i\pi}{n}} \rangle$, where $L$ is a one parameter subgroup of $\C^*$, of the form $\{ e^{tz_0} \ | \ t \in \R \}$ for a certain $z_0 \neq 0$ ;\sk

\item or the rotational part of $p$ has dense image in $\U$, in this case $\overline{\mathrm{Im}(p)}  = \C^*$ and the closure of the orbit of $p$ is the set of elements whose closure of the image is $\C^*$.

\end{enumerate}

The latter case is actually the generic case relatively to the Lebesgue measure.

\vspace{4mm}

\paragraph{ \it 3) If the modulus of $p$ is trivial, \textit{i.e.} $p$ is Euclidean.}

This case is the easiest. We let the reader verify that a direct application of Ratner's theorem to the case $\mathbb{R}^{2g} \setminus \{0\} = \Sr / \Stab(\vec{e})$ for the $\Sp$-action gives the following dichotomy:

\begin{enumerate}

\item either $\mathrm{Im}(p)$ is a finite subgroup of $\U$ and in this case the orbit of $p$ is discrete and consist of the representations having the same image.

\item or $\mathrm{Im}(p)$ is dense in $\mathbb{U}$ and in this case the closure of the orbit of $p$ is the set of Euclidean elements of $\Hl$.

\end{enumerate}

\section{The Torelli group action on $\mathbf{P}\mathrm{H}^1_{\alpha}(\Gamma, \mathbb{C})$.}
\label{torelli}

The kernel of the symplectic representation of $\mathrm{Mod}(\Sigma)$ is called the \textit{Torelli group} and is denoted by $\mathcal{I}(\Sigma)$. A remarkable fact is that the action of $\mathcal{I}(\Sigma)$ stabilises globally the fibers of the bundle 

$$ \car \longrightarrow \Hl.$$

For every parameter $\alpha \in \Hl$, $\Stab(\alpha)$ contains the Torelli group and the $\Stab(\alpha)$-action induces a projective linear representation

$$ \tau_{\alpha} : \Stab(\alpha) \longrightarrow \mathrm{Aut}(\mathcal{L}_{\alpha}) \simeq \mathrm{PGL}(2g-2, \mathbb{C}).$$ 

\noindent called the \textit{Chueshev representation}. It was first introduced by Chueshev in \cite{Chu}. 

In \cite{Ghazouani}, we computed explicitly the action of a family of Dehn twists along $2g - 2$ separating curves, which allowed us to prove the ergodicity of the $\Mod$ action. We analyse with more careful detail this action in order to describe the closure of its image in $\mathrm{PGL}(2g-2, \mathbb{C}) $. We prove the following theorem whose proof the whole Section is dedicated to.

\begin{thm}

\label{image}
\begin{enumerate}

\item If $\mathrm{Im}(\alpha)$ is a non-trivial subgroup of $\R^*_+$, then $r_{\alpha}(\mathrm{Stab}(\alpha))$  is dense in $\mathrm{PGL}(2g-2, \mathbb{R}) \subset  \mathrm{PGL}(2g-2, \mathbb{C}) \simeq \mathrm{PGL}(\Ht) $.

\item If $\mathrm{Im}(\alpha) \subset \mathbb{U} $ and is different from $\langle \exp(\frac{2i\pi}{n}) \rangle$ for $n= 1,2,3,4,6$, then $r_{\alpha}(\mathrm{Stab}(\alpha))$  is dense in $\mathrm{PU}(g-1, g-1) \subset  \mathrm{PGL}(2g-2, \mathbb{C}) \simeq \mathrm{PGL}(\Ht) $.

\item  If $ \mathrm{Im}(\alpha)$ is any other subgroup of $\C^*$, then $r_{\alpha}(\mathrm{Stab}(\alpha))$ is dense in $\mathrm{PGL}(2g-2, \mathbb{C})\subset \mathrm{PGL}(\Ht)$.

\end{enumerate}

\end{thm}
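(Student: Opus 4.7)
The plan is to exploit the explicit matrices representing Dehn twists along separating curves computed in \cite{Ghazouani} and upgrade the group they generate first to a Zariski dense subgroup of an appropriate target algebraic group, then to a topologically dense one.

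First I would identify the correct target Lie group $G_\alpha$. The cup product on twisted cohomology pairs $\Ht$ with $\mathrm{H}^1_{\overline{\alpha}}(\Gamma,\C)$ as recalled in Section \ref{euclidean}. When $\alpha$ is unitary, $\overline{\alpha}$ coincides with $\alpha^{-1}$ and the pairing descends to the Hermitian form $\mathrm{vol}$ of signature $(g-1,g-1)$, so $r_\alpha$ lands in $\mathrm{PU}(g-1,g-1)$. When $\alpha$ takes positive real values, complex conjugation on cocycles fixes a real form $\mathrm{H}^1_\alpha(\Gamma,\R)$ preserved by $r_\alpha$, placing the image in $\mathrm{PGL}(2g-2,\R)$. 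For all other $\alpha$ no additional algebraic invariant is available and the target is the full $\mathrm{PGL}(\Ht)$.

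The main step is to produce enough explicit elements in $r_\alpha(\Stab(\alpha))$ to establish Zariski density in $G_\alpha$. Using the computation in \cite{Ghazouani} for a well chosen system of $2g-2$ separating curves $c_1,\ldots,c_{2g-2}$, the Chueshev representation sends each Dehn twist $T_{c_i}$ to a rank one unipotent matrix $M_i$. Conjugating the $M_i$ by further elements of $\Stab(\alpha)$ (for example by powers of twists along nonseparating curves that do not move $\alpha$) one enlarges this collection to a large family of rank one unipotents. The hard part will be to show that their logarithms span the whole Lie algebra $\mathrm{Lie}(G_\alpha)$: the obstruction is visibly arithmetic, since the $\log M_i$ only produce combinations with coefficients in $\mathbb{Z}[\mathrm{Im}(\alpha)]$. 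When $\mathrm{Im}(\alpha)$ is the group of $n$-th roots of unity for $n=2,3,4,6$, this ring is a discrete subring of $\C$ stabilising a proper sub-Lie algebra, reflecting the crystallographic restriction on the plane, and density genuinely fails; this is exactly why these values are excluded. For all other $\alpha$ an explicit inspection of the generators should rule out any proper invariant subalgebra.

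Once Zariski density is secured, Lemma \ref{racine} intervenes: it provides, for each generator $M_i$, a compatible sequence of $k$-th roots realised as Chueshev images of twists along curves whose relevant homology class is divisible by $k$, so that a full one parameter subgroup $\{\exp(t\log M_i)\ |\ t\in\R\}$ lies in the topological closure of $r_\alpha(\Stab(\alpha))$. The closure is therefore a closed Lie subgroup of $G_\alpha$ of positive dimension whose Zariski closure is all of $G_\alpha$. Since each of the three target groups is almost simple, this forces the closure to be $G_\alpha$ itself, establishing the three cases of the statement.
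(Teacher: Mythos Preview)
Your proposal has a genuine gap centered on a misreading of Lemma~\ref{racine}. That lemma is \emph{not} a statement about extracting $k$-th roots of unipotent elements; it is the Beukers--Heckman classification: any irreducible connected semisimple $G<\mathrm{SL}(n,\C)$ containing a copy of $\mathrm{SL}(2,\C)$ acting standardly on a $\C^2\oplus\C^{n-2}$ decomposition must be $\mathrm{SL}(n,\C)$ or $\mathrm{Sp}(n,\C)$. Your third paragraph therefore collapses: there is no mechanism in the paper for producing one-parameter subgroups by taking roots of Dehn twist images, and your passage from Zariski density to topological density is unjustified. (Indeed, a single unipotent in $\mathrm{PGL}_n$ is Zariski dense in a one-parameter subgroup but topologically discrete in it, so Zariski density of the full group does not by itself yield positive-dimensional closure without further input.)

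The paper's actual route is rather different and more direct. In genus~$2$ one writes down the matrices of two separating Dehn twists and applies J{\o}rgensen's inequality: whenever one can choose $A,B\in\mathrm{Im}(\alpha)$ with $(2-A-A^{-1})(2-B-B^{-1})<1$, the pair generates a non-discrete non-elementary subgroup of $\mathrm{PSL}(2,\C)$, whose closure is then forced to be $\mathrm{PSL}(2,\R)$, $\mathrm{PU}(1,1)$ or $\mathrm{PSL}(2,\C)$ according to the nature of $\alpha$. The excluded values $n=2,3,4,6$ are precisely those for which no such $A,B$ exist; the discrete cases with $\mathrm{Im}(|\alpha|)$ nontrivial and discrete are handled by bringing in twists along nonseparating curves in $\mathrm{Ker}(\alpha)$ (Lemma~\ref{particular}). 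For arbitrary genus, embedded genus-$2$ subsurfaces supply, via Lemma~\ref{2dim}, a copy of $\mathrm{SL}(2)$ inside the closure acting on a $\C^2\oplus\C^{2g-4}$ splitting; Lemma~\ref{irreducible} gives irreducibility; and \emph{then} Lemma~\ref{racine} (after complexification in the real and unitary cases) forces the closure to be the full target group. Your identification of the targets $G_\alpha$ is correct, and your intuition that the exceptional $n$ reflect an arithmetic obstruction is sound, but the architecture of the argument needs to be rebuilt along these lines.
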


\subsection{Remarkable elements of $\Ht$}
\label{remarkable}
Let $\delta \subset \Sigma$ be a separating simple curve. For any $\alpha \in \Hl$, we define $\mu_{\delta}$ an element of $\Ht$ associated to $\delta$. Let $p$ be the base point of $\pi_1 \Sigma = \Gamma$. We make the assumption that $p \in \delta$. Any class $[\gamma] \in \Gamma$ can be represented by a closed curve $\gamma$ based at $p$ such that $\gamma$ intersects $\delta$ transversally and a finite number of time. \\ The curve $\delta$ separates $\Sigma$ into two components $\Sigma_+$ and $\Sigma_-$. It can be decomposed in a finite number of curves $\gamma_1, \cdots,\gamma_k$ whose end points lie on $\delta$ such that for all $i\leq k$, $\gamma_i$ is entirely contained in $\Sigma_+$ or $\Sigma_-$. Let $\beta_i$ a closed curve that one get joining the end points of $\gamma_i$ by a portion of $\delta$. Remark that the homology class of $\beta_i$ does not depend on the way we close $\gamma_i$ because $\delta$ is separating and hence homologically trivial. We set by definition 

$$ \mu_{\delta}(\gamma) = (-1)^{\epsilon} + \sum_{i=1}^k{(-1)^{\epsilon+i}\prod_{j\leq i}{\alpha(\beta_j)}} $$

\noindent This formula defines an element $\mu_{\delta}$ of  $\mathrm{Z}^1_{\alpha}(\Gamma, \mathbb{C})$ which is trivial in $\Ht$ if and only if the restriction of $\alpha$ to either $\Sigma_+$ or $\Sigma_-$ is trivial. This construction of $\mu_{\delta}$ can be extended to the case where $\delta$ is such that $\alpha(\delta)=1$, but the construction is slightly more involved. It is done in detail in \cite[Lemma 3.2]{Ghazouani}. We will use this fact in a crucial way at the end of the section, see Lemma \ref{particular}.

\vspace{4mm}

We give now a more conceptual construction of $\mu_{\delta}$. $\delta$ is a separating curve, the two surfaces $\Sigma_+$ and $\Sigma_-$ are therefore such that $\Sigma_+ \cap \Sigma_- = \delta$. The Mayer-Vietoris exact sequence for the twisted cohomology of $\Sigma$ associated to the partition $\Sigma = \Sigma_+ \cup \Sigma_-$ is the following

$$ 0 \rightarrow  H^0_{\alpha}(\Sigma) \rightarrow H^0_{\alpha}(\Sigma_+) \oplus H^0_{\alpha}(\Sigma_-) \rightarrow H^0_{\alpha}(\delta) \rightarrow H^1_{\alpha}(\Sigma)  \rightarrow  H^1_{\alpha}(\Sigma_+) \oplus H^1_{\alpha}(\Sigma_-) \rightarrow H^1_{\alpha}(\delta) \rightarrow H^2_{\alpha}(\Sigma)  \rightarrow \cdots$$

If $\alpha$ is non-trivial restricted to both $\Sigma_+$ and $\Sigma_-$, the two first factors of the sequence vanish. We are left with an injective morphism

$$ H^0_{\alpha}(\delta) \rightarrow H^1_{\alpha}(\Sigma) $$ whose image of the fundamental class of $ H^0_{\alpha}(\delta)$ in $ H^1_{\alpha}(\Sigma) $ is $\mu_{\delta}$.

\subsection{Fomula for the action of a Dehn twist.}

\begin{prop}
\label{action}
Let $\delta \subset \Sigma$ be a simple curve such that $\alpha(\delta) = 1$. For $\mu_{\delta} $ as above, the action of $T_{\delta}$ on $\Ht$ is a transvection of vector $\mu_{\delta}$. More precisely, for any $\lambda \in \mathrm{Z}^1_{\alpha}(\Gamma, \mathbb{C})$,

$$ T_{\delta}\cdot \lambda = \lambda + \lambda(\delta)\mu_{\delta} $$

\end{prop}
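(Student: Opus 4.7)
The plan is to unwind the definition: the action of $T_\delta$ on $\Ht$ is by precomposition with the induced outer automorphism $(T_\delta)_*$ of $\Gamma$, so I need to compute $\lambda \circ (T_\delta)_*$ and show that it equals $\lambda + \lambda(\delta)\mu_\delta$ modulo coboundaries. The key input is the combinatorial description of how a Dehn twist affects a loop: after choosing a base point $p$ on $\delta$, any class $[\gamma] \in \Gamma$ can be represented by a loop intersecting $\delta$ transversally, and $(T_\delta)_*$ inserts (the appropriate power of) $\delta$ at each intersection point.

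First I would fix such a representative and write $\gamma = \gamma_1 \gamma_2 \cdots \gamma_k$ as in the construction of $\mu_\delta$ preceding the proposition, so that each $\gamma_i$ is an arc between two consecutive points of $\gamma \cap \delta$. Then $(T_\delta)_* \gamma$ is obtained by inserting $\delta^{\epsilon_i}$ (with signs $\epsilon_i = \pm 1$ recording the intersection sign) between $\gamma_i$ and $\gamma_{i+1}$. I would then apply the cocycle identity iteratively to
\[
\lambda\bigl(\gamma_1 \delta^{\epsilon_1} \gamma_2 \delta^{\epsilon_2} \cdots \delta^{\epsilon_{k-1}} \gamma_k\bigr);
\]
because $\alpha(\delta) = 1$, the multiplicative weights that appear in the cocycle rule are unaffected by the inserted $\delta$'s, so every term $\lambda(\gamma_j)$ comes out with exactly the same coefficient $\alpha(\gamma_1 \cdots \gamma_{j-1})$ as in the expansion of $\lambda(\gamma)$ itself. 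Subtracting yields
\[
((T_\delta \cdot \lambda)-\lambda)(\gamma) \;=\; \lambda(\delta)\sum_{i=1}^{k-1} (-1)^{\epsilon_i}\,\alpha(\gamma_1 \gamma_2 \cdots \gamma_i),
\]
up to a sign convention on the orientation of the arcs closing $\gamma_i$.

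Comparing with the explicit formula
\[
\mu_\delta(\gamma) = (-1)^{\epsilon} + \sum_{i=1}^{k}(-1)^{\epsilon+i}\prod_{j\leq i}\alpha(\beta_j),
\]
the two sums match term by term once one observes that the partial products $\alpha(\gamma_1 \cdots \gamma_i)$ coincide with $\prod_{j\le i}\alpha(\beta_j)$: indeed, each $\beta_j$ is obtained from $\gamma_j$ by closing it up through an arc of $\delta$, and $\alpha(\delta) = 1$ makes this closing harmless. This identifies the correction term as $\lambda(\delta)\mu_\delta(\gamma)$.

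Finally I would verify independence of the choices made (base point on $\delta$, how arcs of $\gamma_i$ are closed to form $\beta_i$): a different choice changes the cocycle $\mu_\delta$ by a coboundary $(1-\alpha)\cdot c$, and a routine check shows the same coboundary appears in the expression for $(T_\delta \cdot \lambda) - \lambda$, so the formula descends unambiguously to $\Ht$. The main obstacle is book-keeping of signs and orientations at the intersection points, and treating the case where $\delta$ is non-separating (so that $\mu_\delta$ is defined via the more involved construction of \cite[Lemma 3.2]{Ghazouani}); the hypothesis $\alpha(\delta) = 1$ is precisely what makes the telescoping argument above work without any $\alpha$-weighted corrections accumulating on the inserted copies of $\delta$.
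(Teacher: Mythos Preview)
The paper does not actually prove this proposition: it simply refers to \cite{Ghazouani}, Lemma~3.2. Your sketch is the natural direct computation and is almost certainly what that reference carries out---decompose $\gamma$ along its intersections with $\delta$, use that $(T_\delta)_*$ inserts $\delta^{\pm 1}$ at each crossing, expand via the cocycle rule, and exploit $\alpha(\delta)=1$ so that the weights on the $\lambda(\gamma_j)$ terms are unchanged while the inserted copies of $\delta$ contribute $\lambda(\delta)$ times an alternating sum of partial $\alpha$-products, which is exactly $\mu_\delta(\gamma)$.

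Two small remarks. First, writing the sign as $(-1)^{\epsilon_i}$ is slightly off if $\epsilon_i\in\{\pm1\}$; the contribution of $\lambda(\delta^{\epsilon_i})$ is $\epsilon_i\,\lambda(\delta)$ (using $\lambda(\delta^{-1})=-\alpha(\delta)^{-1}\lambda(\delta)=-\lambda(\delta)$), so the sum should read $\lambda(\delta)\sum_i \epsilon_i\,\alpha(\gamma_1\cdots\gamma_i)$. Second, for separating $\delta$ the signs $\epsilon_i$ genuinely alternate (the loop must cross back and forth between $\Sigma_+$ and $\Sigma_-$), which is why they match the $(-1)^{\epsilon+i}$ in the displayed formula for $\mu_\delta$; you implicitly use this and it is worth saying explicitly. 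With those adjustments the argument is complete for separating $\delta$, and as you note the non-separating case (still under $\alpha(\delta)=1$) follows the same pattern once $\mu_\delta$ is defined as in the cited lemma.
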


Once again, the proof can be found in \cite{Ghazouani}, Lemma 3.2.

\subsection{Genus two representations.}

The genus $2$ is the model to be understood, for it contains all the geometrical difficulty. We therefore make the assumption that $\Sigma$ has genus $2$ for the remainder of this subsection. Depending on whether $\alpha$ is totally real, unitary or generic the image of the Torelli group falls in three distinct subgroups of $\mathrm{PSL}(2, \C)$, respectively $\mathrm{PSL}(2, \R)$, $\mathrm{PU}(1,1)$ or $\mathrm{PSL}(2, \C)$. We aim at describing more precisely the image depending on $\alpha$. We are going to prove that except for a finite number of exceptions, the image of $\Stab(\alpha)$ is dense in the subgroup of $\mathrm{PSL}(2, \C)$ in which it is obviously contained.

\begin{lemma}
\label{Torelli2}
\begin{enumerate}

\item If $\alpha \in \Hl$ has image a non-trivial subgroup of $\mathbb{R}^*$., then the image of $\Stab(\alpha)$ under the Chueshev representation has dense image in $\mathrm{PSL}(2,\R) \subset  \mathrm{PSL}(2,\C)  \simeq \mathrm{PGL}(\Ht)$.

\item If $\alpha \in \Hl$ has image a non-trivial subgroup of $\mathbb{C}^*$ not contained in $\U$, then the image of $\Stab(\alpha)$ under the Chueshev representation has dense image in $\mathrm{PSL}(2,\C) \simeq \mathrm{PGL}(\Ht)$.

\item If $\alpha \in \Hl$ has image a subgroup of $\U$ which is different from $\langle \exp(\frac{2i\pi}{n}) \rangle$ for $n= 1,2,3,4,6$, then the image of $\Stab(\alpha)$ under the Chueshev representation has dense image in $\mathrm{PU}(1,1) \subset \mathrm{PSL}(2,\C)  \simeq  \mathrm{PGL}(\Ht)$.

\item If $\alpha \in \Hl$ has image $\langle \exp(\frac{2i\pi}{n}) \rangle$ for $n= 1,2,3,4,6$, then the image of $\Stab(\alpha)$ under the Chueshev representation a discrete subgroup of $\mathrm{PU}(1,1) \subset \mathrm{PSL}(2,\C)  \simeq   \mathrm{PGL}(\Ht)$.
\end{enumerate}
\end{lemma}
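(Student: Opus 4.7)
The plan is to exploit Proposition \ref{action}: each simple closed curve $\delta$ with $\alpha(\delta)=1$ gives a Dehn twist $T_\delta$ in $\Stab(\alpha)$ whose action on $\mathrm{Z}^1_\alpha(\Gamma, \C)$ is the transvection $\lambda \mapsto \lambda + \lambda(\delta)\mu_\delta$. Since $\dim_\C \Ht = 2g-2 = 2$ in genus $2$, this descends to a parabolic element of $\mathrm{PGL}(\Ht) \simeq \mathrm{PSL}(2,\C)$ fixing the point $[\mu_\delta] \in \mathbf{P}\Ht \simeq \mathbb{CP}^1$. The whole proof reduces to producing enough such curves so that the resulting parabolic elements generate a dense subgroup of the natural ambient group.

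First I would identify that ambient subgroup case by case: for $\alpha$ taking real values the resulting $\R$-structure on $\Ht$ is preserved by $\Stab(\alpha)$, forcing the image into $\mathrm{PSL}(2,\R)$; for $\alpha$ unitary, preservation of the Hermitian form $\mathrm{vol}$ of signature $(1,1)$ recalled in Subsection \ref{euclidean} forces the image into $\mathrm{PU}(1,1)$; otherwise the image is \emph{a priori} only in $\mathrm{PSL}(2,\C)$. Next, using a standard symplectic basis $(a_1, b_1, a_2, b_2)$ of $\Gamma$, I would exhibit a concrete family of curves with $\alpha(\delta)=1$—the canonical separating curve $[a_1, b_1]$, and suitable non-separating curves whose homology class lies in $\ker \alpha$—and compute the vectors $\mu_\delta$ from the construction recalled in Subsection \ref{remarkable}, obtaining their projective classes $[\mu_\delta] \in \mathbb{CP}^1$ explicitly in terms of the values of $\alpha$ on the basis.

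The core of the argument is then a density criterion: a non-discrete subgroup of $\mathrm{PSL}(2,\C)$ (resp.\ $\mathrm{PSL}(2,\R)$, $\mathrm{PU}(1,1)$) containing two non-commuting parabolic elements is dense in the ambient group. Zariski density follows immediately once I find two parabolic Dehn twists with distinct fixed points $[\mu_{\delta_1}] \neq [\mu_{\delta_2}]$; in the unitary and real cases I must further check that these fixed points lie on the invariant boundary circle, which is automatic since $\mu_\delta$ is isotropic for $\mathrm{vol}$ (this follows from $\mu_\delta$ being supported in one of the subsurfaces cut by $\delta$, as in Subsection \ref{remarkable}). Non-discreteness I would establish by a direct commutator computation: the commutator $[T_{\delta_1}, T_{\delta_2}]$ of two non-commuting transvections with parameters depending continuously on $\alpha$ is a loxodromic element whose trace is a polynomial in entries of $\alpha$; tuning $\alpha$ within its $\Mod$-orbit closure produces elements arbitrarily close to the identity.

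Finally, for the four exceptional $\alpha$ with image $\langle \exp(\tfrac{2i\pi}{n}) \rangle$, $n \in \{2,3,4,6\}$, the obstruction is arithmetic. All values $\alpha(\gamma)$ lie in the cyclotomic ring $\Z[\exp(\tfrac{2i\pi}{n})]$, which for exactly these $n$ is a \emph{lattice} in $\C$ (the crystallographic restriction). Explicit inspection of $\mu_\delta$ and of the coefficient $\lambda(\delta)$ in Proposition \ref{action} shows that all matrix entries of the Chueshev transvections land in this lattice, so the image sits inside an arithmetic subgroup of $\mathrm{PU}(1,1)$ and is discrete. I expect the main obstacle to be step three: proving non-discreteness cleanly without circular reference to the dynamical results this lemma is meant to feed into. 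The resolution will be to carry out the commutator trace calculation by hand, showing that for $\alpha$ outside a proper real-analytic subvariety, some $\mathrm{tr}^2[T_{\delta_1}, T_{\delta_2}]$ takes values with non-trivial continuous variation, which suffices for non-discreteness of the closure.
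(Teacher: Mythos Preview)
Your overall architecture matches the paper's: produce parabolic elements from Dehn twists via Proposition~\ref{action}, argue that two of them with distinct fixed points generate a non-elementary subgroup, upgrade non-discreteness to density in the appropriate real form, and handle $n\in\{1,2,3,4,6\}$ by the cyclotomic-lattice observation. The exceptional discreteness argument is exactly the paper's.

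The gap is in your non-discreteness step. You write that you will obtain elements close to the identity by ``tuning $\alpha$ within its $\Mod$-orbit closure''. But the statement is about a \emph{fixed} $\alpha$: you must show that $r_\alpha(\Stab(\alpha))$ itself is non-discrete. Varying $\alpha$ along its orbit gives you a family of \emph{different} (pairwise conjugate) subgroups, and the fact that traces vary continuously across this family says nothing about any single one of them being non-discrete. What you are allowed to vary, for fixed $\alpha$, are the \emph{curves} $\delta_1,\delta_2$; this varies $A=\alpha(a)$, $B=\alpha(b)$ only within $\mathrm{Im}(\alpha)$, which may be a small discrete set, so a continuity argument cannot close the gap in general.

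The paper resolves this with J{\o}rgensen's inequality. Taking the two separating curves of Figure~\ref{twotwists}, the two twists act as
\[
\begin{pmatrix}1 & (1-A)(1-B)\\ 0 & 1\end{pmatrix},\qquad
\begin{pmatrix}1 & 0\\ (1-A^{-1})(1-B^{-1}) & 1\end{pmatrix},
\]
and J{\o}rgensen's lemma says they generate a non-discrete non-elementary group as soon as $(2-A-A^{-1})(2-B-B^{-1})<1$. One then checks that for every $\alpha$ outside the exceptional list one can choose $A,B\in\mathrm{Im}(\alpha)$ satisfying this; for unitary $\alpha$ this is exactly the condition $2(1-\cos\tfrac{2k\pi}{n})<1$, which fails precisely for $n=1,2,3,4,6$. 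There remains the case where $\mathrm{Im}(|\alpha|)$ is a non-trivial discrete subgroup of $\R^*_+$ and no such $A,B$ exist; here the paper brings in, as you anticipated, a Dehn twist along a non-separating $a\in\mathrm{Ker}(\alpha)$ and shows (Lemma~\ref{particular}) that together with $T_\delta$ it generates a parabolic one-parameter subgroup with translation lengths $1$ and $1-\alpha(b)$, whose ratio is irrational, giving non-discreteness directly. Your proposal mentions these non-separating twists but does not articulate how they yield non-discreteness; this is the second place where a concrete mechanism is missing.
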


We dedicate the rest of the subsection to the proof of Lemma \ref{Torelli2}, which will require examination of several cases. Most of them will be dealt with the calculation of the action of two particular Dehn twists along separating curves, as it had previously been done in \cite{Ghazouani}.

\paragraph{\bf Action of two Dehn twists along separating curves.}
\label{2dehntwists}

If the curves $a$ and $b$ are such that both $A=\alpha(a)$ and $B=\alpha(b)$ are different from $1$, we have proven in \cite[Section 3.2]{Ghazouani} that the matrices of the two Dehn twists around the cuves of Figure \ref{twotwists} are
$$ \begin{pmatrix}

1 & (1-A)(1-B) \\
0 & 1
\end{pmatrix} \begin{pmatrix}

1 & 0 \\
(1-A^{-1})(1-B^{-1}) & 1
\end{pmatrix} $$
\begin{figure}[!h]
\centering
\includegraphics[scale=0.7]{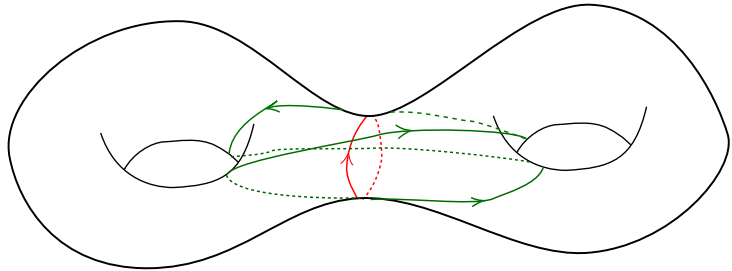}
\caption{Two separating curves.}
\label{twotwists}
\end{figure}

When the product $(2 - A - A^{-1})(2 - B - B^{-1})$ is strictly smaller than $1$, these two matrices generate a  non-discrete, non-elementary subgroup of $\mathrm{PSL}(2,\mathbb{C})$, according to Jorgensen's lemma (see \cite{Jorgensen}). Its closure is a non-elementary Lie subgroup of $\mathrm{PSL}(2,\mathbb{C})$ and is either $\mathrm{PSL}(2,\mathbb{C})$ or $\mathrm{PSL}(2,\mathbb{R})$ (which is conjugated to $\mathrm{PU}(1,1)$). Whether it is one or the other can be read in the fact that the trace of the elements of the group generated by the two matrices are all real. \noindent Whenever we are in one of the following cases 

\begin{itemize}

\item $\mathrm{Im}(\alpha)$ is dense in $\C^*$,

\item $\mathrm{Im}(\alpha)$ is dense in $\R^*$,

\item $\mathrm{Im}(\alpha)$ contains a subgroup dense in $\U$,

\end{itemize}

\noindent we can find a configuration (up to the action of the mapping class group) for which $A$ and $B$ satisfy the hypothesis required to apply Jorgensen's lemma to find that the closure of the image of the Torelli group in $ \mathrm{PGL}(\Ht) \simeq  \mathrm{PSL}(2,\C) $ is 

\begin{itemize}

\item the whole $ \mathrm{PGL}(\Ht) $ when $\mathrm{Im}(\alpha)$ is dense in $\C^*$;

\item the stabiliser in $ \mathrm{PGL}(\Ht)$ of the projectivisation of the subset of elements of $\Ht$ which are totally real when    $\mathrm{Im}(\alpha)$ is dense in $\R^*$;

\item either, if $\mathrm{Im}(\alpha)$ is contained and dense in $\mathbb{U}$, the projectivisation of the group of isometries of the Euclidean volume form (which has signature $(1,1)$); or the whole $ \mathrm{PGL}(\Ht) $ when $\mathrm{Im}(\alpha)$ is contains a subgroup dense in $\U$ and $\mathrm{Im}(|\alpha|)$ is a discrete subgroup of $\R^*_+$.

\end{itemize}

The last case can be refined. Since the only thing we need is to find $A$ and $B$ in $\mathrm{Im}(\alpha)$ such that $(2 - A - A^{-1})(2 - B - B^{-1}) < 1$, the image of the Torelli group is dense when $\mathrm{Im}(\alpha) = \langle \exp(\frac{2i\pi}{n} \rangle$ for $n \neq 1, 2, 3, 4, 6$. This comes from the fact that $(2 -\exp(\frac{2ik\pi}{n}  - \exp(\frac{2ik\pi}{n} ^{-1}) = 2(1 - \cos(\frac{2ik\pi}{n}))$ which can be strictly smaller than $1$ if and only if $n$ is not in the list $1, 2, 3, 4, 6$. In fact, when $n$ belong to this list, the image of $\Stab(\alpha)$ by the Chueshev representation is discrete since it lies in $\mathrm{PGL}(2, \Z)$ if $n=1$ or $2$, $\mathrm{PGL}(2, \Z[i])$ if $n=4$ and $\mathrm{PGL}(2, \Z[\omega])$ if $n=3$ or $6$.We also ask the question whether these images have finite index in the arithmetic groups they are contained in.

Working out the remaining cases requires to also make use of elements of $\Stab(\alpha)$ which are not in the Torelli group. So far it is unknown if the image of the Torelli group can be discrete when $\mathrm{Im}(\alpha)$ is a discrete cyclic subgroup of $\R^*$.

\paragraph{\bf When separating curves do not suffice.}
\label{special}

For some $\alpha$ such that $\mathrm{Im}(\alpha)$ is a discrete cyclic subgroup of $\R^*$, any pair of Dehn twists as in the preceding paragraph fail to generate a non-discrete group. To deal with this case, we are going to appeal to elements of $\Stab(\alpha)$ which do not belong to $\tor$. Assume that $a$ is a simple closed curve which belongs to $\mathrm{Ker}(\delta)$. Then $T_{a}$  the Dehn twist along $a$ belongs to $\Stab(\alpha)$. Let $\delta$ be a separating curve such that $\delta$ and $\nu$ are respectively the blue and red curves on Figure \ref{genustwo}.

\begin{figure}[!h]
\centering
\includegraphics[scale=0.7]{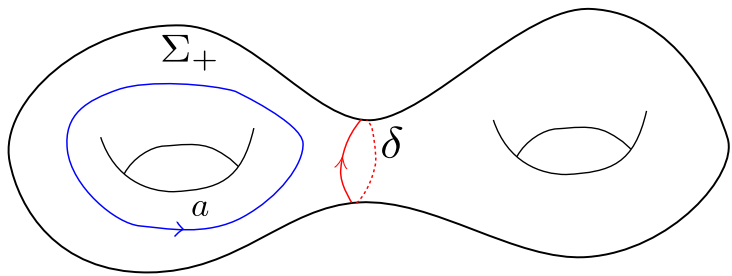}
\caption{The curves $a$ and $\delta$.}
\label{genustwo}
\end{figure}

\begin{lemma}
\label{particular}
The group generated by the action of $T_{\delta}$ and a twist $T_a$ with $a \in \mathrm{Ker}(\alpha)$ and $a \subset \Sigma_+$  generate a parabolic subgroup of $ \mathrm{PGL}(\Ht) $ whose matrices in a base $\mu_{\delta}$, $\lambda$ where $\lambda$ is such that $\lambda(\delta)=1$ are  $$ \begin{pmatrix}

1 & 1 \\
0 & 1
\end{pmatrix}, \begin{pmatrix}

1 &  1 - \alpha(b) \\
0& 1
\end{pmatrix} $$

\noindent where $b$ is a simple closed curve of $\Sigma_+$ such that $a \wedge b = 1$.
\end{lemma}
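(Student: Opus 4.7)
The plan is to invoke Proposition \ref{action} for both Dehn twists and then to establish that the transvection vectors $\mu_\delta$ and $\mu_a$ define the same line in $H^1_\alpha(\Gamma, \mathbb{C})$, which immediately forces the generated subgroup to be parabolic.

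Since $\delta$ is separating, $\alpha(\delta) = 1$; together with $\alpha(a) = 1$ this puts both $T_\delta$ and $T_a$ in $\Stab(\alpha)$. Proposition \ref{action} then gives
\[
T_\delta \cdot \nu = \nu + \nu(\delta)\,\mu_\delta, \qquad T_a \cdot \nu = \nu + \nu(a)\,\mu_a
\]
for every $\nu \in Z^1_\alpha(\Gamma, \mathbb{C})$. I would first check that each twist fixes $\mu_\delta$ in $H^1_\alpha$: for $T_\delta$ because $\mu_\delta(\delta) = 0$ (the class $\mu_\delta$, by the Mayer--Vietoris description of Section \ref{remarkable}, pairs trivially with cycles supported in either piece alone, and $\delta$ is a boundary in $\Sigma_+$), and for $T_a$ because the same Mayer--Vietoris description, combined with the fact that $a$ can be represented by a loop in $\Sigma_+$ disjoint from $\delta$ and the hypothesis $\alpha(a) = 1$, gives $\mu_\delta(a) = 1-\alpha(a) = 0$.

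The core of the argument is the proof that $\mu_a$ is a scalar multiple of $\mu_\delta$ in $H^1_\alpha$. I would invoke the Mayer--Vietoris sequence for $\Sigma = \Sigma_+ \cup \Sigma_-$: the class $\mu_\delta$ is, by construction, the image under the connecting homomorphism of the generator of $H^0_\alpha(\delta) \simeq \mathbb{C}$, and the extended construction of $\mu_a$ recalled in Section \ref{remarkable} exhibits it as arising from data localized on $\Sigma_+$. Because $\alpha|_{\Sigma_-}$ is non-trivial, the image of the connecting map is a one-dimensional subspace of $H^1_\alpha(\Sigma)$, and since $\dim H^1_\alpha = 2$ in genus two, both classes must lie on this common line.

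Once collinearity is granted, both twists are unipotent upper-triangular in the basis $(\mu_\delta, \lambda)$, so the generated subgroup is parabolic. For the matrix entries: $T_\delta \cdot \lambda = \lambda + \lambda(\delta)\mu_\delta = \lambda + \mu_\delta$ yields the first matrix. For the second, the cocycle identity applied to $\delta = [a,b]$ with $\alpha(a) = 1$ gives $\lambda(\delta) = (1-\alpha(b))\lambda(a)$, whence $\lambda(a) = 1/(1-\alpha(b))$; combined with the explicit identification of the proportionality constant between $\mu_a$ and $\mu_\delta$ (obtained by comparing representatives modulo the coboundary line $\mathbb{C}(1-\alpha)$) this produces the stated coefficient $1 - \alpha(b)$. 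The main obstacle is precisely this last computation: while collinearity is a structural consequence of Mayer--Vietoris, pinning down the exact constant that makes $\lambda(a)\,\mu_a$ equal to $(1-\alpha(b))\,\mu_\delta$ demands careful cocycle bookkeeping on the four standard generators of $\Gamma$, with attention to the normalization used for $\mu_\delta$.
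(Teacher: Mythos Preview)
Your plan follows the same architecture as the paper's proof: invoke Proposition~\ref{action} for both twists, show that $\mu_a$ and $\mu_\delta$ coincide (up to scalar), and use the cocycle identity on $\delta=[a,b]$ with $\alpha(a)=1$ to get $\lambda(\delta)=(1-\alpha(b))\lambda(a)$. The paper does exactly this, in two bullet points.

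The one substantive difference is how the coincidence of $\mu_a$ and $\mu_\delta$ is established. The paper does not argue collinearity abstractly via Mayer--Vietoris; it simply asserts that $\mu_\delta$ and $\mu_a$ are \emph{equal} in $\Ht$, verified by evaluating both cocycles on a generating set and using $\alpha(a)=1$. Your Mayer--Vietoris route is conceptually appealing, but as written it has a soft spot: you say $\mu_a$ ``arises from data localized on $\Sigma_+$'' and therefore lands in the one-dimensional image of the connecting map, but that image is characterized as the kernel of restriction to $H^1_\alpha(\Sigma_+)\oplus H^1_\alpha(\Sigma_-)$, and you have not actually checked that $\mu_a$ restricts trivially to $\Sigma_+$ (trivial restriction to $\Sigma_-$ is clear). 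The direct basis computation sidesteps this and simultaneously pins down the proportionality constant you flag as the ``main obstacle''. So your plan is correct, but the paper's execution is shorter: equality of $\mu_a$ and $\mu_\delta$ is a one-line check rather than a structural argument plus a separate normalization chase.
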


\begin{proof}

This lemma is the consequence of two remarks:

\begin{itemize}
\item $\mu_{\delta}$ and $\mu_{a}$ are equal. It suffices to compute their values on an appropriate basis, and from the fact that $\alpha(a) = 1$ follows the equality.

\item Choosing appropriate lifts $\tilde{a}, \tilde{b}$ and $\tilde{\delta}$ to $\pi_1 \Sigma$, we have $\tilde{\delta} = [\tilde{a}, \tilde{b}]$. Then for any $\lambda \in \mathrm{Z}^1_{\alpha}(\Gamma, \mathbb{C})$, $\lambda(\tilde{\delta}) =  (1- \alpha(b))\lambda(\tilde{a}) -  (1- \alpha(a))\lambda(\tilde{b})  =  (1- \alpha(b))\lambda(\tilde{a})$. 

\end{itemize}

We deduce the lemma from Proposition \ref{action}.
\end{proof}

The important corollary of this lemma is that if $\mathrm{Im}(|\alpha|)$ has discrete non-trivial image in $\R^*_+$,  $r_{\alpha}(\Stab(\alpha))$ has dense image in $\mathrm{PGL}_{\R}(\Ht) \simeq \mathrm{PGL}(2,\R)$ if $\mathrm{Im}(\alpha) \subset \R^*$.

%
%
%
%
%

\subsection{Arbitrary genus.}

In this subsection, we do complete the proof of Theorem \ref{image}. The idea to deal with arbitrary genus is to see that embedded genus two surfaces give rise to copies of $\mathrm{PGL}(2,\R), \mathrm{PGL}(2,\C) $ or $\mathrm{PU}(1,1)$ in $\mathrm{PGL}(\Ht) \simeq \mathrm{PGL}(2g-2, \C)$ (depending on whether $\alpha$ is totally real, unitary or generic) and that those many embeddings allow to generate the whole $\mathrm{PGL}(2g-2,\R), \mathrm{PGL}(2g-2,\C) $ or $\mathrm{PU}(g-1,g-1)$, still depending on the nature of $\alpha$ (except for the finite number of exceptional unitary cases). To this purpose we will need a lemma of representation of Lie groups that we expose in the next paragraph.

\begin{lemma}
\label{racine}
Let $G < \mathrm{SL}(n,\C)$ be an irreducible connected semi-simple Lie group containing a copy of $\mathrm{SL}(2,\C)$ preserving a decomposition $\C^2 \oplus \C^{n-2}$, acting naturally on $\C^2$ and fixing $\C^{n-2}$. Then $G = \mathrm{SL}(n,\C)$ or $\mathrm{Sp}(n,\C)$.

\end{lemma}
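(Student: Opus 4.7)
The plan is to combine a reduction to the simple case with a structural argument using the minimal nilpotent orbit of $\mathfrak{g}$. First, writing the semi-simple Lie algebra as a direct sum $\mathfrak{g} = \bigoplus_{i=1}^{k} \mathfrak{g}_i$ of simple ideals, the irreducible $G$-module $\C^n$ decomposes as an outer tensor product $V_1 \otimes \cdots \otimes V_k$ with each $V_i$ irreducible for $\mathfrak{g}_i$. Since every factor $\mathfrak{g}_i$ must act non-trivially on $\C^n$ (because $\mathfrak{g}_i$ embeds in $\mathfrak{sl}(n,\C)$ and a trivial action would force $\mathfrak{g}_i=0$), we have $\dim V_i \geq 2$. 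The restriction of the embedded $\mathrm{SL}(2,\C)$ to each $V_i$ has character $\chi_i\in\Z_{\geq 0}[q,q^{-1}]$, and the hypothesis reads $\prod_i \chi_i(q) = q + q^{-1} + (n-2)$. The maximal Laurent degree of the right-hand side being one forces all but one $\chi_i$ to be a constant equal to $\dim V_i \geq 2$; the product of these constants must then divide both the coefficient of $q$ and that of $q^0$, and hence divides $1$. This compels $k=1$, so $G$ is simple.

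Second, with $G$ simple, the $\mathfrak{sl}_2$-triple $(E,H,F)\subset\mathfrak{g}$ that realizes the embedded $\mathrm{SL}(2,\C)$ acts on $\C^n$ with $H$-eigenvalues $(1,-1,0,\dots,0)$, so $\mathrm{ad}(H)$ on $\mathfrak{g}\subset\mathfrak{sl}(n,\C)$ has eigenvalues in $\{-2,-1,0,1,2\}$. The $\pm 2$-eigenspaces of $\mathrm{ad}(H)$ in $\mathfrak{sl}(n,\C)$ are one-dimensional (spanned by $E_{12}$ and $E_{21}$), so the same is true in $\mathfrak{g}$. This is precisely the defining condition of the minimal nilpotent orbit of $\mathfrak{g}$; up to an inner automorphism one may assume $E=E_{\theta}$ for $\theta$ the highest root of $\mathfrak{g}$ and $H=\theta^{\vee}$.

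Third, one classifies the irreducible representations $V=\C^n$ of simple $\mathfrak{g}$ such that $\theta^{\vee}$ has eigenvalues in $\{-1,0,1\}$ with $\pm 1$ each of multiplicity one. The one-dimensional $+1$-eigenspace $V_{+1}\subset V$ is preserved by the Levi subgroup $L=Z_G(\theta^{\vee})$, hence is a character of $L$, so the semi-simple part $[\mathfrak{l},\mathfrak{l}]$ acts trivially on $V_{+1}$. Since $V_{+1}$ contains the highest weight vector, the highest weight $\lambda=\sum m_i\omega_i$ must satisfy $m_j=0$ for every simple root $\alpha_j$ with $\langle\alpha_j,\theta^{\vee}\rangle=0$ (i.e.\ every simple root of $L$), together with $\langle\lambda,\theta^{\vee}\rangle=\sum m_i n_i=1$, where $n_i$ are the co-marks defined by $\theta^{\vee}=\sum n_i\alpha_i^{\vee}$. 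This forces $\lambda=\omega_{i_0}$ for a simple root $\alpha_{i_0}$ which is adjacent to $-\theta$ in the extended Dynkin diagram and has co-mark $n_{i_0}=1$. A direct inspection of the extended Dynkin diagrams and co-marks shows that this occurs exactly for types $A_n$ (with $\lambda=\omega_1$ or $\omega_n$, giving the standard or dual representation of $\mathfrak{sl}_n$) and $C_n$ (with $\lambda=\omega_1$, the standard representation of $\mathfrak{sp}_n$); the remaining simple types $B,D,E_6,E_7,E_8,F_4,G_2$ are excluded because the simple roots adjacent to $-\theta$ in their extended Dynkin diagrams all have co-mark $\geq 2$.

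The main conceptual difficulty is the identification of the embedded $\mathfrak{sl}_2$ with the minimal nilpotent orbit and the translation of the multiplicity-one hypothesis into the triviality of the Levi action on $V_{+1}$; once these are in place, the case analysis reduces to a finite inspection of extended Dynkin diagrams. Concluding, the image of $G$ in $\mathrm{SL}(n,\C)$ is $\mathrm{SL}(n,\C)$ or $\mathrm{Sp}(n,\C)$.
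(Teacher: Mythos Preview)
Your argument is correct. The reduction to the simple case via the tensor-product character factorization is clean, the identification of the embedded $\mathfrak{sl}_2$ with the one attached to the highest root is justified (the one-dimensionality of $\mathfrak{g}_{\pm 2}$ together with $\mathfrak{g}_{|i|>2}=0$ forces $E$ to be a long-root vector, hence minimal), and the final step---reading off from $\langle\lambda,\theta^\vee\rangle=1$ and the triviality of $[\mathfrak l,\mathfrak l]$ on the highest-weight line that $\lambda=\omega_{i_0}$ with $\alpha_{i_0}$ adjacent to $-\theta$ and of comark~$1$---is a valid reduction to an inspection of affine Dynkin diagrams that indeed leaves only $(A_{n-1},\omega_1)$, $(A_{n-1},\omega_{n-1})$ and $(C_m,\omega_1)$.

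For comparison: the paper does not give its own proof of this lemma but refers to Beukers--Heckman, \emph{Monodromy for the hypergeometric function ${}_nF_{n-1}$}, Invent.\ Math.\ \textbf{95} (1989), Proposition~6.4, and specialises their statement to the parameter $c=+1$. Their argument is in the same spirit as yours---a classification of simple complex Lie groups admitting an irreducible representation whose restriction to a suitably embedded $\mathrm{SL}(2,\C)$ has the prescribed shape---carried out via highest weights, but it is phrased for the more general family of ``pseudo-reflections'' arising from hypergeometric monodromy (with an extra sign parameter $c=\pm1$ that also captures $\mathrm{SO}(n,\C)$ when $c=-1$). What you have written is therefore a direct, self-contained proof of exactly the case the paper needs, avoiding the detour through the hypergeometric framework; the trade-off is that your argument is tailored to the specific weight pattern $(1,-1,0,\dots,0)$, whereas Beukers--Heckman's statement covers a one-parameter family of such patterns simultaneously. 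Two small remarks: the phrase ``defining condition of the minimal nilpotent orbit'' would be better stated as a \emph{characterisation} (it is not the definition, but it is a well-known equivalent), and in the last sentence ``the image of $G$'' is just $G$, since $G$ is already given as a subgroup of $\mathrm{SL}(n,\C)$.
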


Its proof can be found in \cite[Proposition 6.4, p345]{BeuckersHeckman}. It is actually stated in a more general form in the article and the version we present is a specialisation of their statement for $c= +1$. We will also need the two following geometric lemmas:

\begin{lemma}
\label{irreducible}
For any $\alpha \in \Hl$, $r_{\alpha}(\mathrm{Stab}(\alpha)) \subset \mathrm{GL}(\Ht)$ acts irreducibly on $\Ht$.
\end{lemma}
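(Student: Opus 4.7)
The plan is to leverage the transvection formula of Proposition \ref{action}: for every simple closed curve $\delta$ with $\alpha(\delta)=1$, the Dehn twist $T_\delta$ belongs to $\Stab(\alpha)$ and acts on $\Ht$ as the transvection
\[ \lambda \;\longmapsto\; \lambda+\lambda(\delta)\,\mu_\delta. \]
Because separating curves are homologically trivial they automatically satisfy $\alpha(\delta)=1$, providing a large supply of such transvections. The general principle guiding the proof is that, for a subgroup of $\mathrm{GL}(V)$ generated by transvections, any invariant subspace $W$ must, for each generating transvection with vector $v$ and hyperplane $H$, either contain $v$ or be contained in $H$. I will therefore try to produce enough $\mu_\delta$'s inside any invariant subspace to force it to be everything.

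I would first dispose of the degenerate case $\alpha\equiv 1$: then $\Stab(\alpha)=\Mod$ and $\Ht=\mathrm{H}^1(\Sigma,\C)$, on which the image of $\Mod$ is the symplectic group $\Sp$, a classical irreducible representation. Assume from now on that $\alpha\neq 1$ and let $V\subset\Ht$ be a non-zero $\Stab(\alpha)$-invariant subspace with $\lambda_0\in V\setminus\{0\}$. The key geometric input is the spanning lemma: \emph{the vectors $\mu_\delta$, as $\delta$ ranges over simple closed curves with $\alpha(\delta)=1$, span $\Ht$.} I would prove it through the Mayer--Vietoris description of $\mu_\delta$ recalled in Section \ref{remarkable} (together with the extension to non-separating $\delta$ with $\alpha(\delta)=1$ alluded to just before Lemma \ref{particular}), by exhibiting an explicit spanning family obtained from separating curves that cut $\Sigma$ into subsurfaces on which $\alpha$ restricts non-trivially, completed with non-separating curves in $\ker\alpha$ whenever $\alpha$ is trivial on some subsurface. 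Granting the lemma, there exists $\delta_0$ with $\alpha(\delta_0)=1$ and $\lambda_0(\delta_0)\neq 0$, and then
\[ \mu_{\delta_0}=\lambda_0(\delta_0)^{-1}\bigl(T_{\delta_0}\lambda_0-\lambda_0\bigr)\in V. \]

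To conclude that $V=\Ht$, I would iterate the transvection trick together with the naturality $f^*\mu_\delta=\mu_{f^{-1}(\delta)}$: for any $f\in\Stab(\alpha)$ the orbit vector $\mu_{f(\delta_0)}=(f^{-1})^*\mu_{\delta_0}$ lies in $V$, and for any $\delta''$ with $\alpha(\delta'')=1$ and $\mu_{\delta_0}(\delta'')\neq 0$ (note this evaluation is well-defined precisely because $\alpha(\delta'')=1$) the identity $T_{\delta''}\mu_{\delta_0}-\mu_{\delta_0}=\mu_{\delta_0}(\delta'')\mu_{\delta''}$ yields $\mu_{\delta''}\in V$. Bootstrapping this reachability together with the spanning lemma exhausts $\Ht$. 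The main obstacle will be the spanning lemma itself: a uniform proof via the twisted cohomology long exact sequence should work, but the analysis becomes delicate when $\alpha$ has small unitary image, or when every separating decomposition of $\Sigma$ forces $\alpha$ to be trivial on one side --- precisely the edge cases that already made the genus two analysis of Section \ref{torelli} subtle.
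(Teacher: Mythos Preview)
Your approach and the paper's both rest on the transvection formula of Proposition~\ref{action}, but the organization differs. The paper argues by contraposition in a single stroke: given a proper subspace $V\subsetneq\Ht$, it asserts the existence of one null-homologous simple closed curve $\delta$ with $\mu_\delta\notin V$ and $\varphi(\delta)\neq 0$ for some $\varphi\in V$; then $T_\delta\varphi=\varphi+\varphi(\delta)\mu_\delta\notin V$ and invariance fails. Your version instead tries to show that any nonzero invariant $V$ must absorb enough $\mu_\delta$'s to equal $\Ht$. The paper's route is shorter because it only needs \emph{one} good curve, whereas yours needs a spanning statement, a separating statement, and a reachability argument connecting them.

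There is, however, a genuine logical slip in your write-up. Your ``spanning lemma'' asserts that the vectors $\mu_\delta$ span $\Ht$; you then immediately deduce that for a nonzero $\lambda_0$ there is some $\delta_0$ with $\lambda_0(\delta_0)\neq 0$. These are \emph{dual} statements, not the same one: the first says the $\mu_\delta$ fill $\Ht$, the second says the linear forms $\lambda\mapsto\lambda(\delta)$ separate points of $\Ht$. You need both (the second to obtain $\mu_{\delta_0}\in V$, the first to conclude $V=\Ht$ after the bootstrapping), and neither implies the other in general. The separating statement can be proved directly from the cocycle identity $\lambda([\gamma_1,\gamma_2])=(1-\alpha(\gamma_2))\lambda(\gamma_1)-(1-\alpha(\gamma_1))\lambda(\gamma_2)$: if $\lambda$ vanishes on all commutators then $\lambda$ is a multiple of $1-\alpha$, hence zero in $\Ht$; one then checks that enough commutators are represented by simple closed curves. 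But this needs to be stated and argued separately from the spanning of the $\mu_\delta$. Once you supply this missing dual lemma, your scheme goes through; alternatively, you can short-circuit the whole bootstrapping by adopting the paper's one-curve contrapositive, which packages the two dual facts into the single existence claim that for a proper $V$ there is a $\delta$ with $\mu_\delta\notin V$ yet $\varphi(\delta)\neq 0$ for some $\varphi\in V$.
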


\begin{proof}
Let $V$ a strict subvector space of $\Ht$. There exists a free homotopy class $\delta$ of null-homologous curves, non trivial homotopically such that:

\begin{itemize}

\item $\exists \varphi \in V$ such that $\varphi([\delta]) \neq 0$;\sk

\item $\mu_{\delta}$ (see Section \ref{remarkable}) does not belong to $V$. 

\end{itemize}

$T_{\delta}$ carries $\varphi$ outside $V$ since $T_{\delta}\cdot \varphi = \varphi + \varphi([\delta]) \mu_{\delta}$. This implies that no strict subvector space of $\Ht$ is $\mathrm{Stab}(\alpha)$-invariant. 
\end{proof}

\begin{lemma}
\label{2dim}
\begin{enumerate}

\item  If $\mathrm{Im}(\alpha) \subset \mathbb{U} $ and is different from $\langle \exp(\frac{2i\pi}{n}) \rangle$ for $n= 1,2,3,4,6$, then the closure of $r_{\alpha}(\mathrm{Stab}(\alpha)) \subset \mathrm{PGL}(\Ht) \simeq \mathrm{PGL}(2g-2, \mathbb{C})$ contains a copy of $\mathrm{PU}(1,1)$ whose lift to $\mathrm{GL}(2g-2, \mathbb{C})$ preserves a decomposition $\C^2 \oplus \C^{2g-4} $, acting as $\mathrm{U}(1,1)$ on the $\C^2$ factor and trivially on the other.

\item  If $\mathrm{Im}(\alpha) \subset \mathbb{R} $ and $\alpha$ is not unitary, then the closure of $r_{\alpha}(\mathrm{Stab}(\alpha)) \subset \mathrm{PGL}(\Ht) \simeq \mathrm{PGL}(2g-2, \mathbb{C})$ contains a copy of $\mathrm{PSL}(2,\R)$ whose lift to $\mathrm{GL}(2g-2, \mathbb{C})$ preserves a decomposition $\C^2 \oplus \C^{2g-4} $, acting as $\mathrm{SL}(2,\R)$ on the $\C^2$ factor and trivially on the other.

\item   If $ \mathrm{Im}(\alpha)$ is any other subgroup of $\C^*$, then the closure of $r_{\alpha}(\mathrm{Stab}(\alpha)) \subset \mathrm{PGL}(\Ht) \simeq \mathrm{PGL}(2g-2, \mathbb{C})$ contains a copy of $\mathrm{PSL}(2,\C)$ whose lift to $\mathrm{GL}(2g-2, \mathbb{C})$ preserves a decomposition $\C^2 \oplus \C^{2g-4} $, acting as $\mathrm{SL}(2,\C)$ on the $\C^2$ factor and trivially on the other.

\end{enumerate}

\end{lemma}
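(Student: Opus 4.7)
The plan is to reduce to Lemma \ref{Torelli2} by embedding a genus $2$ subsurface $\Sigma_+$ in $\Sigma$ and using the twisted Mayer--Vietoris sequence to split $H^1_\alpha(\Gamma, \C)$ compatibly with the action of mapping classes supported in $\Sigma_+$.

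First I would choose a separating simple closed curve $\delta \subset \Sigma$ bounding a genus $2$ subsurface $\Sigma_+$ and a genus $g-2$ subsurface $\Sigma_-$, chosen so that $\alpha|_{\Sigma_+}$ has image of the same type (totally real non-unitary, non-exceptional unitary, or generic, depending on the case) as $\alpha$ itself, and so that $\alpha|_{\Sigma_-}$ is non-trivial. Since $\delta$ is separating it is null-homologous, so $\alpha(\delta) = 1$ automatically. Twisted Mayer--Vietoris then gives
\begin{equation*}
0 \to H^0_\alpha(\delta) \to H^1_\alpha(\Sigma) \to H^1_\alpha(\Sigma_+) \oplus H^1_\alpha(\Sigma_-) \to H^1_\alpha(\delta) \to 0,
\end{equation*}
where the outer terms have dimension $1$ and the inner ones both have dimension $2g-2$. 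The Dehn twists $T_a, T_b$ from the proof of Lemma \ref{Torelli2} applied to $\Sigma_+$ are supported in $\Sigma_+$ and hence lie in $\tor \cap \mathrm{Stab}(\alpha)$ as elements of $\mathrm{Mod}(\Sigma)$. They fix the class $\mu_\delta$ (which generates the image of $H^0_\alpha(\delta)$) and fix pointwise the classes coming from $H^1_\alpha(\Sigma_-)$, yielding a $(2g-4)$-dimensional invariant subspace $W \subset H^1_\alpha(\Sigma)$ on which the action is trivial. On the $2$-dimensional quotient $H^1_\alpha(\Sigma)/W$, naturally identified with $H^1_\alpha(\Sigma_+^{\mathrm{closed}})$ (where $\Sigma_+^{\mathrm{closed}}$ is the closed genus $2$ surface obtained by capping $\delta$), these twists act as in the closed genus $2$ case, and Lemma \ref{Torelli2} applied to $\Sigma_+^{\mathrm{closed}}$ shows that the closure of their action on the quotient is the required Lie group $G \in \{\mathrm{PU}(1,1), \mathrm{PSL}(2,\R), \mathrm{PSL}(2,\C)\}$.

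To finish, I would upgrade this block-upper-triangular picture to a block-diagonal one: the closure $\overline{\langle T_a, T_b \rangle} \subset \mathrm{PGL}(H^1_\alpha(\Sigma))$ is a Lie group whose unipotent radical lives in $\mathrm{Hom}(\C^2, W)$ (the upper-right block) and whose reductive quotient is $G$; since $G$ is simple, a Levi decomposition produces a copy of $G$ acting block-diagonally with respect to a suitable complement of $W$. The main obstacle I foresee is the Mayer--Vietoris bookkeeping, specifically verifying that the induced action on the quotient genuinely matches the Chueshev representation of $\Sigma_+^{\mathrm{closed}}$ (so that Lemma \ref{Torelli2} applies verbatim) and that $T_a, T_b$ really act trivially on all of $W$ rather than merely preserving it. The latter reduces, via Proposition \ref{action} together with the explicit construction of $\mu_\delta$ in Section \ref{remarkable}, to checking that the cocycles representing $H^1_\alpha(\Sigma_-)$ can be chosen to vanish on curves in $\Sigma_+$, which is a direct consequence of the functoriality of twisted cohomology under inclusion.
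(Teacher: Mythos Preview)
Your approach is correct, but it takes a more circuitous path than the paper's.

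The paper avoids both the Mayer--Vietoris bookkeeping and the Levi decomposition by exploiting the transvection formula of Proposition~\ref{action} directly. It chooses two separating curves $\delta$ and $\nu$ lying in an embedded genus-$2$ subsurface (your $\Sigma_+$); since $T_\delta$ and $T_\nu$ are transvections in the directions $\mu_\delta$ and $\mu_\nu$, the $2$-plane $\mathrm{span}(\mu_\delta,\mu_\nu)$ is automatically invariant, and the action on it is exactly the genus-$2$ picture of Lemma~\ref{Torelli2}. For the complement, the paper simply exhibits $2g-4$ further homologically trivial curves $\eta_i$, each disjoint from both $\delta$ and $\nu$, whose classes $\mu_{\eta_i}$ complete $\mu_\delta,\mu_\nu$ to a basis of $\Ht$. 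Disjointness forces $\mu_{\eta_i}(\delta)=\mu_{\eta_i}(\nu)=0$, so every $\mu_{\eta_i}$ is fixed by both twists and the decomposition $\C^2\oplus\C^{2g-4}$ is block-diagonal from the outset --- no passage through an upper-triangular form is needed.

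Your route --- identifying the fixed $(2g-4)$-dimensional subspace $W$ via Mayer--Vietoris and then producing the invariant $2$-plane through a Levi section --- is a legitimate alternative, and it has the conceptual merit of making the identification with $\mathrm{H}^1_\alpha(\Sigma_+^{\mathrm{closed}})$ explicit. The cost is exactly the extra verification you flag (that the quotient action matches the closed genus-$2$ Chueshev representation) plus the appeal to Levi, both of which the paper's explicit basis sidesteps. One small correction: in the discrete-modulus case handled via Lemma~\ref{particular}, the auxiliary twist $T_a$ is along a non-separating curve in $\mathrm{Ker}(\alpha)$, so it lies in $\mathrm{Stab}(\alpha)$ but not in $\mathcal{I}(\Sigma)$; this does not affect your argument, but your assertion that the twists lie in $\mathcal{I}(\Sigma)\cap\mathrm{Stab}(\alpha)$ is not accurate in that case.
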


\begin{proof}

The strategy is to choose an appropriate couple of Dehn twists whose action generate the wanted subgroup. For a generic $\alpha$, a couple of curves $\delta$ and $\nu$ like in the figure below will do the trick.

\begin{figure}[!h]
\centering
\includegraphics[scale=0.5]{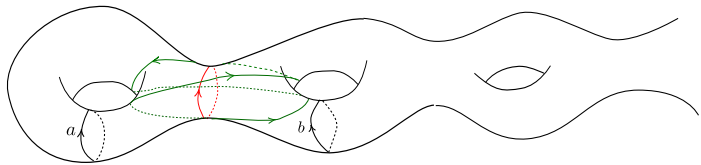}
\caption{Two Dehn twists along the curves $\delta$ and $\nu$.}
\label{twotwists}
\end{figure}

The evil cases are when we cannot choose $a$ and $b$ such that $(2 - A - A^{-1})(2 - B - B^{-1}) > 1$, where $A = \alpha(a)$ and $B = \alpha(B)$, see subsection \ref{2dehntwists}. These corresponds to cases such that either 

\begin{itemize}

\item $\alpha$ is unitary with image $\langle \exp(\frac{2i\pi}{n}) \rangle$ for $n= 1,2,3,4,6$;

\item $\mathrm{Im}(|\alpha|)$ is discrete.

\end{itemize}

There is nothing that can be done in the first case, for we have seen that when $\mathrm{Im}(\alpha) = \langle \exp(\frac{2i\pi}{n}) \rangle$ for $n= 1,2,3,4,6$ the image of the $\mathrm{Stab}(\alpha)$ representation we are considering is a discrete subgroup of $\mathrm{PGL}(\Ht)$. 
\\ In the second case, the trick we used in the genus $2$ case can be recycled: using Dehn twists along simple closed curves of $\mathrm{Ker}(\alpha)$ (see paragraph \ref{special}). 

Except for the unitary exceptional cases, this leads to generating a dense subgroup in $\mathrm{U}(1,1)$, $\mathrm{SL}(2,\R)$ or $\mathrm{SL}(2,\C)$ preserving the plane generated by $\mu_{\delta}$ and $\mu_{\nu}$. One can easily find a set of curves $(\eta_i)_{1\leq i \leq 2g-4}$ such that 

\begin{itemize}

\item the $\eta_i$'s are homologically trivial;

\item the  $\eta_i$'s are all disjoint from $\delta$ and $\nu$;

\item the $\mu_{\delta_i}$ complete $\mu_{\delta}$ and $\mu_{\nu}$ in a basis of $\Ht$.

\end{itemize}

The subspace generated by the $\eta_i$'s is therefore fixed by the action of the subgroup we have constructed and this finishes the proof of the lemma.
\end{proof}

We are now ready to complete the proof of Theorem \ref{image}. Let $G$ be the connected component of the closure of the image of $\Stab(\alpha)$ in $\mathrm{PGL}(\Ht) \simeq \mathrm{PGL}(2g-2, \C)$; it is a real Lie subgroup of $ \mathrm{PGL}(2g-2, \C)$. We prove the theorem depending on whether $\alpha$ is unitary, real or generic.

\begin{enumerate}

\item \textit{$\alpha$ is generic}

\vspace{2mm} \noindent Consider the subgroup generated by couples of Dehn twists like in the proof of Lemma \ref{2dim}. The closure of such a subgroup belongs to $G$ and the latter therefore satisfies the conclusion of Lemma \ref{2dim}, and is also a connected complex Lie subgroup. Also Lemma \ref{irreducible}  ensures that the action of (a lift of) $G$ on $\C^n$ is irreducible. Lemma \ref{racine} implies that $G'$ is therefore the whole $\mathrm{PGL}(2g-2,\C)$, since $G$ does not preserve any symplectic form. 

\item \textit{$\alpha$ is totally real and not unitary}

By complexifying, we are going to be reduced to the previous case. If $G$ was a strict subgroup of $\mathrm{PGL}(2g-2,\R)$, $G_{\C}$ its complexification sitting in $\mathrm{PGL}(2g-2,\C)$ would also be a strict subgroup. But then (a lift of) $G_{\C}$ enjoys the following properties: 

\begin{itemize}

\item its action is irreducible;

\item it contains a copy of $\mathrm{SL}(2, \C)$ stabilising a projective line.

\end{itemize}

\noindent The reasoning of the previous case therefore applies and we can conclude that $G$ is the whole $\mathrm{SL}(2g-2,\R)$.

\item \textit{$\alpha$ is unitary and not exceptional}

\noindent $\mathrm{SU}(g-1,g-1)$ is an other real form of $\mathrm{SL}(2g-2, \C)$, the complexification operation carried out in the totally real case generalises and we are able to conclude that $G$ is the whole $\mathrm{PU}(g-1,g-1)$.

\end{enumerate}

\section{Geometrising representations.}

Consider $\rho : \Gamma \longrightarrow \Aff$ a representation and denote by $\alpha \in \Hl$ its linear part. We are going to deal with the question of whether $\rho$ is the holonomy of a branched affine structure depending on the orbit of $\alpha$ under the $\Mod$-action, which has been studied in Section \ref{modlinear}. We indicate how we are going to proceed: first we deal with the linear parts and prove that every linear character $\alpha\in \Hl$ is realised as the linear part of the holonomy of a branched structure. Then we remark that for a given character $\alpha\in \Hl$, the set of geometric elements(that is, realised by a branched affine structure) of $\mathbf{P}(\Ht)$ is open and invariant by the action $\Stab(\alpha)$. This remark, together with the explicit description of the closure of the image of $\Stab(\alpha)$ in $\mathrm{PGL}(\Ht)$ obtained in the previous section, will allow us to deal with almost all cases. The remaining one, namely when $\mathrm{Im}(\alpha)$ is unitary and finite of order $2,3,4$ or $6$, will be dealt with in a more geometric way, giving explicit models realising admissible representations by means of simple surgeries on translation surfaces. We prove the following theorem:

\begin{thm}

Let $\rho : \Gamma \longrightarrow \Aff$ be a non-abelian representation.

\begin{itemize}

\item If $\rho$ is not Euclidean, then it is the holonomy of a branched affine structure.

\item If $\rho$ is Euclidean, it is the holonomy of a branched affine structure if and only if its volume is positive. 

\end{itemize}

\end{thm}

\subsection{Abelian representations.}
\label{abelian}
We say a word about Abelian representations which have already essentially been dealt with  by Haupt. Actually, Haupt only handles the case where the representations are translations. But since a affine structure whose holonomy is Abelian is either a translation surface or the exponential of a translation surface, the general case of Abelian surface can easily been deduced from Haupt theorem.

\begin{prop}
\label{abelianrep}
Assume that $(\Sigma, \A)$ is a  branched affine surface whose holonomy is abelian, and that $\Sigma$ is not a translation surface (assumption which is equivalent to the statement that the image of the holonomy falls in the subgroup of transformation fixing a given point in $\C$). Then there exists a translation structure $\mathcal{T}$ on $\Sigma$ such that $\A = \exp(\mathcal{T})$. 

\end{prop}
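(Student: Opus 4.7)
The natural plan is to show that the developing map $\mathrm{dev}$ cannot attain the common fixed point of the holonomy, and then to take its logarithm to produce the desired translation structure.

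First, after conjugating the affine structure by an appropriate element of $\Aff$, I may assume that the common fixed point of the image of $\rho$ is $0 \in \C$, so that $\rho(\gamma)(z) = \alpha(\gamma)z$ for every $\gamma \in \Gamma$, where $\alpha$ is the linear part of $\rho$. Since the logarithmic $1$-form $\mathrm{d}z/z$ on $\C^*$ is invariant under multiplication by any nonzero scalar, the pullback $\omega = \mathrm{dev}^*(\mathrm{d}z/z)$ is $\Gamma$-invariant on $\widetilde{\Sigma}$ and descends to a meromorphic $1$-form on the compact Riemann surface $\Sigma$.

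The central step, which I expect to be the main obstacle, is the proof that $\omega$ is in fact holomorphic. Its poles can only occur at points $p \in \Sigma$ whose lifts satisfy $\mathrm{dev}(\tilde p) = 0$, and the residue of $\omega$ at each such point is a strictly positive integer: if $p$ is not a branched point, then $\mathrm{dev}$ is a local biholomorphism and $\omega$ has a simple pole of residue $1$; if $p$ is a branched point of order $k$ at which $\mathrm{dev}$ vanishes, then in suitable local coordinates $\mathrm{dev}(z) = z^k$, and $\omega = k\,\mathrm{d}z/z$ has a simple pole of residue $k$. The residue theorem on $\Sigma$ forces these positive integers to sum to zero, which is impossible unless the set of poles is empty; hence $\mathrm{dev}$ avoids $0$.

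Once $\mathrm{dev} : \widetilde{\Sigma} \to \C^*$ is known to be nowhere vanishing, the simple connectedness of $\widetilde{\Sigma}$ allows me to define $\widetilde{\mathrm{dev}} = \log \mathrm{dev}$ as a well-defined holomorphic function. The equivariance relation $\mathrm{dev}(\gamma \cdot z) = \alpha(\gamma)\mathrm{dev}(z)$ together with analytic continuation produces a group homomorphism $\tilde{\alpha} : \Gamma \to \C$ lifting $\alpha$, such that $\widetilde{\mathrm{dev}}(\gamma \cdot z) = \widetilde{\mathrm{dev}}(z) + \tilde{\alpha}(\gamma)$ for all $\gamma \in \Gamma$. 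The pair $(\widetilde{\mathrm{dev}}, \tilde{\alpha})$ is then the developing datum of a translation structure $\mathcal{T}$ on $\Sigma$ (whose conical singularities occur exactly at the branched points of $\A$, with the same orders), and $\A = \exp(\mathcal{T})$ holds by construction. Everything beyond the residue computation is routine.
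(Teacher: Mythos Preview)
Your proof is correct and follows essentially the same route as the paper: normalise the fixed point to $0$, form the logarithmic derivative $\omega = \mathrm{dev}^*(\mathrm{d}z/z) = d(\mathrm{dev})/\mathrm{dev}$, observe that it descends to a meromorphic form on $\Sigma$ whose only possible poles have strictly positive integer residues (the vanishing orders of $\mathrm{dev}$), and use the residue theorem to rule them out. The paper simply stops once $\omega$ is holomorphic and declares it a translation structure, whereas you spell out the lifted developing map $\log\mathrm{dev}$ and its equivariance, but the argument is the same.
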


\proof Suppose $f$ is a developing map of $\A$ for which the corresponding holonomy fixes $0 \in \C$. Then $\omega = \frac{df}{f}$ defines a meromorphic differential on the underlying Riemann surface. The poles of $\omega$ are exactly at the points at which $f$ cancels and its residue at such a  point is the order at which $f$ cancels at this point. The residue formulae implies that the sum of the residues must equal $0$, which implies that there are no poles (in particular $f$ does not cancels). Thus $\omega$ is holomorphic and defines a translation structure $\mathcal{T}$ on $\Sigma$ whose exponential if $\A$, for $\omega = \frac{df}{f}$ is the differential of (the locally well-defined) logarithm of $f$.

\subsection{Linear parts.}

The set of linear parts which are not geometric is a closed $\Mod$-invariant subset of $\Hl$. It is in particular the union of the orbit closures of the elements it contains. Based on this remark we are going to prove the

\begin{prop}
\label{linearr}
Every element in $\Hl$ is the linear part of a branched affine structure.

\end{prop}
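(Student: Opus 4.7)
My plan is to exploit that the geometric representations form an open, $\Mod$-invariant subset of $\chi(\Gamma, \Aff)$ (Proposition \ref{openess}), so their image under the projection $\pi : \chi \to \Hl$ is open and $\Mod$-invariant; hence the set of \emph{non-geometric} linear parts is closed and $\Mod$-invariant, i.e.\ a union of orbit closures classified in Section \ref{orbitclosure}. It is therefore enough to exhibit, inside every orbit closure, at least one $\alpha$ realized as the linear part of some branched affine structure.

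First I would handle all non-unitary $\alpha$ via exponentiation of translation surfaces, using Proposition \ref{abelianrep}. The exponential map $\exp : \mathrm{H}^1(\Sigma, \C) \to \Hl$ is surjective with kernel $\mathrm{H}^1(\Sigma, 2\pi i \Z)$. Given $\alpha$ with $|\alpha|$ not identically $1$, pick any lift $p_0 \in \mathrm{H}^1(\Sigma, \C)$ of $\alpha$; because $\Re(p_0)$ is then a nonzero class in $\mathrm{H}^1(\Sigma, \R)$, the cup-product pairing $\Re(p_0) \cdot \Im(p)$ depends non-trivially on $\Im(p)$, and we can add an element of $\mathrm{H}^1(\Sigma, 2\pi i \Z)$ to $p_0$ to obtain a lift $p$ of $\alpha$ with $\mathrm{vol}(p) = \Re(p)\cdot\Im(p) > 0$ arbitrarily large (and in particular larger than any lattice covolume, handling the second clause of Haupt's theorem). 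Haupt's Theorem \ref{Haupt} then produces a translation surface with holonomy $p$, and Proposition \ref{abelianrep} (exponentiation) gives a branched affine structure on $\Sigma$ whose linear holonomy is $\exp(p) = \alpha$.

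The remaining case is $\alpha$ unitary. Here the previous argument fails because $\Re(p_0) = 0$, so no lift of $\alpha$ to $\mathrm{H}^1(\Sigma,\C)$ has positive volume. I would instead build explicit examples. For $\alpha$ unitary with finite image $\langle e^{2i\pi/n}\rangle$, a regular $2n$-gon with rotational identifications produces a branched Euclidean structure with prescribed finite unitary linear holonomy on a surface of small genus; iterating the \emph{adding a handle} surgery of Section \ref{adding} (which adjoins a trivial-holonomy handle) then produces such a structure on $\Sigma$. For $\alpha$ unitary with dense image in $\U$, I would use the \emph{connected sum} surgery of Section \ref{connected}, whose linear holonomy multiplies: $\alpha_{\Sigma_1 \# \Sigma_2} = \alpha_1 \times \alpha_2$. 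Starting from building blocks realizing each prescribed generator of $\mathrm{Im}(\alpha)$ on a small genus piece and gluing them via connected sums, supplemented by trivial handles, we can realize any prescribed unitary character of $H_1(\Sigma, \Z)$.

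The main obstacle will be the unitary case, since Haupt's theorem is unavailable and one must produce affine structures by hand. The multiplicativity of linear holonomy under connected sum, together with the freedom to attach trivial-holonomy handles to reach the correct genus and to independently prescribe the $\alpha$-values on a basis of $H_1(\Sigma,\Z)$, is the key mechanism that makes the construction go through in full generality.
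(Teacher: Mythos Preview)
Your overall strategy matches the paper's exactly: reduce to finding one geometric $\alpha$ in each $\Mod$-orbit closure, handle non-unitary $\alpha$ by exponentiating a translation surface obtained from Haupt's theorem, and build unitary examples by explicit surgery. The non-unitary argument is essentially the paper's, phrased slightly differently (the paper chooses lifts $a',b'$ of each pair $\alpha(a_i),\alpha(b_i)$ forming an oriented real basis of $\C$, rather than your $2\pi i\Z$-shift argument; both work).

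Where you diverge is the unitary case, and here you make your own life harder than necessary. You correctly state the reduction to orbit closures, but then forget to use it: by the classification in Section~\ref{orbitclosure}, two unitary characters lie in the same orbit closure as soon as they have the same image in $\U$. So you only need, for each closed subgroup $\langle e^{i\theta}\rangle\subset\U$, \emph{one} branched affine structure on $\Sigma$ whose linear holonomy has that image. The paper obtains this in one stroke: perform the ``adding a handle'' surgery of Section~\ref{adding} on any genus $g-1$ translation surface along two segments of equal length making angle $\theta$; the resulting genus $g$ surface has linear holonomy with image exactly $\langle e^{i\theta}\rangle$. Your $2n$-gon and connected-sum constructions are plausible but vague (you do not specify the identifications or verify the resulting genus), and they aim at realising a \emph{given} $\alpha$ rather than merely hitting its orbit closure --- which is more than the proposition requires.
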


\begin{proof}

To prove the proposition, we are going to prove that each possible orbit closure of the $\Mod$-action contains a least one element which is realised by a branched affine structure (which is sufficient since the set of geometric linear holonomies is open). This task will be made easy by the fact that these orbit closure are almost all characterised by the (closure of the) image of their elements, and we therefore distinguish cases depending on the closure of $\mathrm{Im}(\alpha) \subset \C^*$.

\begin{enumerate}

\item $\mathrm{Im}(\alpha)$ is unitary. We perform the 'adding a handle' surgery on a translation surface of genus $g-1$ along two segments of same length. Assume that the angle in between the two segment is $\theta$. Then the image of the linear part of the new surface of genus $g$ has image $\langle e^{i\theta} \rangle$. We realise this way elements of the closure of the orbit of any unitary representation.

\item $\mathrm{Im}(\alpha)$ is not unitary. In that case there exists a translation  structure $\mathcal{T}$ on $\Sigma$ such that $\exp(\mathcal{T})$ has linear holonomy $\alpha$. That can easily be deduced from the following fact. Let $a$ and $b$ are arbitrary non zero complex numbers which generate a non unitary subgroup of $\C^*$. There exists $a'$ and $b'$ such that $a= \exp(a')$ and $b = \exp(b')$, and $(a,b)$ form an oriented basis of $\C$as a $\R$-vector space. 
\noindent If $a_1, b_1, \cdots, a_g, b_g$ is a  suitable symplectic basis of $\mathrm{H}_1(\Sigma, \Z)$, applying the last fact to the $g$ couples $(\alpha(a_i), \alpha(b_i))$ leads to construct $\alpha' : \mathrm{H}_1(\Sigma, \Z) \longrightarrow \C$ satisfying the hypothesis of Haupt's theorem and such that $\exp(\alpha') = \alpha)$. Geometrising $\alpha'$ by means of Haupt's theorem and exponentiating leads to geometrising $\alpha$. Note that the full holonomy of the structure is abelian and agrees with $\alpha$.

%
%
%

\end{enumerate}

\end{proof}

\begin{rem}

Based on Proposition \ref{openess} it is easy to prove that every strictly affine representation is geometric. For every non unitary $\alpha : \Gamma \longrightarrow \C^*$ there exists a translation structure on $\Sigma$ whose exponential has holonomy $\alpha$. This $\alpha$ represents the point $0$ in $\Ht$ and by Proposition \ref{openess} the set of geometric representations of linear holonomy $\alpha$ is an open set containing $0$, its projection to $\mathbf{P}(\Ht)$ is therefore onto.

\noindent This trick does not work with euclidean representations since the only abelian geometric and unitary representation have trivial linear holonomy; it is a consequence of Proposition \ref{abelianrep} and Haupt's theorem (Theorem \ref{Haupt}). That is for geometrising these representations that the mapping class group dynamics point of view is efficient.

\end{rem}

\subsection{A tiny bit of linear algebra.}

We classify the orbits of the respective actions of $\mathrm{PGL}(2g-2, \R)$ and $\mathrm{PU}(g-1,g-1)$ on $\mathbb{CP}^{2g-3}$.

\begin{prop}
\label{orbit1}
\begin{itemize}

\item If $g=2$, the action of $\mathrm{PGL}(2g-2, \R) = \mathrm{PSL}(2,\R)$ on $\mathbb{CP}^1$ has three orbits: the real line, the upper half plane and the lower half plane.

\item If $g\leq 3$, the $\mathrm{PGL}(2g-2, \R)$ -action on  $\mathbb{CP}^{2g-3}$ has exactly two orbits: the set of real lines and its complement.

\item The action of $\mathrm{PU}(g-1,g-1)$ has exactly three orbits on $\mathbb{CP}^{2g-3}$: the sets of line which are positive, null and negative for the hermitian form preserved by $\mathrm{U}(g-1,g-1)$.

\end{itemize}
\end{prop}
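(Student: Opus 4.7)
The plan is to treat each of the three items independently, since each reduces to a well-known linear-algebraic orbit-classification statement. In every case the existence of the claimed invariants (sign of the imaginary part, reality, or sign of the hermitian form) is obvious, so the content is transitivity on each invariant stratum.

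For the first item (case $g=2$), the action of $\mathrm{PSL}(2,\mathbb{R})$ on $\mathbb{CP}^1$ is the classical M\"obius action. It stabilises $\mathbb{RP}^1=\mathbb{R}\cup\{\infty\}$, and the map $z\mapsto\mathrm{Im}(z)$ shows that the upper and lower half-planes are invariant. Transitivity on each piece is standard: $\mathrm{PSL}(2,\mathbb{R})$ acts transitively on $\mathbb{RP}^1$ (it contains $\mathrm{Stab}(\infty)$ acting as the affine group of $\mathbb{R}$), and it acts transitively on the upper half-plane through the Iwasawa decomposition (equivalently, by hyperbolic isometries of $\mathbb{H}^2$). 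The lower half-plane case is identical by complex conjugation. This gives exactly three orbits.

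For the second item (case $g\geq 3$), the set of \emph{real lines}, i.e.\ the image of $\mathbb{RP}^{2g-3}\hookrightarrow\mathbb{CP}^{2g-3}$, is $\mathrm{PGL}(2g-2,\mathbb{R})$-invariant and the action on it is the standard transitive projective action. It remains to show that $\mathrm{PGL}(2g-2,\mathbb{R})$ acts transitively on the complement. Any $[z]\in\mathbb{CP}^{2g-3}\setminus\mathbb{RP}^{2g-3}$ can be written $z=v+iw$ with $v,w\in\mathbb{R}^{2g-2}$ linearly independent over $\mathbb{R}$; two such presentations $v+iw$ and $v'+iw'$ represent the same projective point iff the real $2$-planes $\mathrm{span}(v,w)$ and $\mathrm{span}(v',w')$ coincide and the ordered frames differ by an element of $\mathbb{C}^*$ acting as a rotation-dilation. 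Since $\dim_{\mathbb{R}}\mathbb{R}^{2g-2}=2g-2\geq 4$, the group $\mathrm{GL}(2g-2,\mathbb{R})$ acts transitively on ordered pairs of linearly independent vectors, hence a fortiori on our set of complex non-real lines, and this descends to $\mathrm{PGL}$.

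For the third item, let $h$ be the hermitian form of signature $(g-1,g-1)$ preserved by $\mathrm{U}(g-1,g-1)$. The three subsets $\{[v]\,:\,h(v,v)>0\}$, $\{[v]\,:\,h(v,v)=0\}$, $\{[v]\,:\,h(v,v)<0\}$ form a $\mathrm{PU}(g-1,g-1)$-invariant partition of $\mathbb{CP}^{2g-3}$. Transitivity within each stratum is Witt's extension theorem: given two vectors of the same (positive, negative, or zero and nonzero) $h$-norm in a non-degenerate hermitian space, any partial isometry between the lines they span extends to a global isometry. To pass to projective orbits one first rescales representatives so that their $h$-norms match (for positive and negative strata this uses positive scalars, and the null stratum is stable under all of $\mathbb{C}^*$ by homogeneity), then applies Witt. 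This yields three orbits and completes the proof.

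The steps are all routine; the only subtlety worth flagging is the dimension threshold in the second item: for $g=2$ the stabiliser of a real $2$-plane in $\mathrm{GL}(2,\mathbb{R})$ is of course all of $\mathrm{GL}(2,\mathbb{R})$ acting on the plane, and the sign of the imaginary part of the inhomogeneous coordinate becomes an extra invariant, producing an extra orbit. This is precisely what disappears once $2g-2\geq 4$, because one can then pre/postcompose with an element of $\mathrm{GL}(2g-2,\mathbb{R})$ acting trivially on the complement of the plane and as an orientation-reversing map on the plane itself.
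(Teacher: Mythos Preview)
Your proof is correct. The paper does not actually prove this proposition --- it simply writes ``We leave to the reader the proof of this elementary proposition'' --- so you are supplying the omitted argument rather than reproducing or diverging from one. Your treatment of each item is standard and clean; you silently (and rightly) read the second item as $g\geq 3$, and your closing remark on the dimension threshold correctly explains why the $g=2$ case (with group $\mathrm{PSL}(2,\mathbb{R})$, as the paper in effect intends despite writing $\mathrm{PGL}$) acquires the extra orbit.
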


We leave to the reader the proof of this elementary proposition. 

\subsection{Strictly affine representations.}

We are now set to prove that every strictly affine representation is the holonomy of a branched affine structure. Recall that if $\alpha \in \Hl$ is strictly affine(meaning that $|\alpha|$ is non-trivial):

\begin{itemize}

\item the set of representations $\rho : \Gamma \longrightarrow \Aff$ whose linear part is $\alpha$ and which are not abelian, up to the action by conjugation of $\Aff$ is parametrised by the projectivised space $\mathbf{P}(\Ht)$;

\item the image of the action of $\Stab(\alpha)$ on $\mathbf{P}(\Ht)$ is dense in either $\mathrm{PGL}(2g-2, \R)$ or  $\mathrm{PGL}(2g-2, \C)$ depending on whether $\mathrm{Im}(\alpha)$ is included in $\mathbb{R}^*$ or not; 

\item the subset of $\mathbf{P}(\Ht)$ that parametrises geometric representations is open, empty and $\Stab(\alpha)$-invariant.

\end{itemize}

The set of non-realisable representations is therefore a closed subset, invariant by the action of $\Stab(\alpha)$. According to Proposition \ref{orbit1}, such a set is either the whole $\mathbf{P}(\Ht)$, or the set of totally real representations(only in the case where $\alpha$ is real). It cannot be the whole $\mathbf{P}(\Ht)$ because its complement is non-empty. We now show that if $\alpha$ is real, the set of totally real representations contains geometric representations. In subsection \ref{abelian}, we show that the abelian representation defined by $\alpha$ is realised by an affine structure. The Ehresman-Thurston argument implies that there is an open set containing $\alpha$ in $\mathrm{Hom}(\Gamma, \Aff)$ which is realised by affine branched structure. Such an open set must contain non abelian, totally real representation whose linear part is $\alpha$. We have proven the 

\begin{prop}

If $\rho$ is a strictly affine representation, then it is the holonomy of a branched affine structure. 

\end{prop}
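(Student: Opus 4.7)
The plan is to reduce the problem to a classification of closed $\Stab(\alpha)$-invariant subsets of the projective fiber $\mathbf{P}(\Ht)$. For a strictly affine $\alpha \in \Hl$, the non-abelian classes of representations with linear part $\alpha$ are parametrised by $\mathbf{P}(\Ht) \cong \mathbb{CP}^{2g-3}$. By Proposition \ref{openess} the subset $\mathcal{G} \subset \mathbf{P}(\Ht)$ of geometric classes is open and $\Stab(\alpha)$-invariant, so its complement $\mathcal{N}$ is closed and $\Stab(\alpha)$-invariant.

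The first input I would use is Theorem \ref{image}: $\mathcal{N}$ is then invariant under the Lie-group closure of $r_\alpha(\Stab(\alpha))$ in $\mathrm{PGL}(\Ht)$, namely $\mathrm{PGL}(2g-2,\mathbb{C})$ when $\mathrm{Im}(\alpha) \not\subset \mathbb{R}^*$, and $\mathrm{PGL}(2g-2,\mathbb{R})$ when $\mathrm{Im}(\alpha) \subset \mathbb{R}^*$ is nontrivial. The second input is Proposition \ref{orbit1}: $\mathrm{PGL}(2g-2,\mathbb{C})$ acts transitively on $\mathbb{CP}^{2g-3}$, so in the non-totally-real case $\mathcal{N} \in \{\emptyset, \mathbf{P}(\Ht)\}$; in the totally real case $\mathcal{N}$ must be either $\emptyset$, the totally real locus, or the whole projective space.

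To exclude $\mathcal{N} = \mathbf{P}(\Ht)$ I would exhibit a point of $\mathcal{G}$. By Proposition \ref{linearr} (using that $\alpha$ is not unitary) the abelian representation $\rho_0$ with linear part $\alpha$ is realised as the exponential of a translation surface, hence is geometric; Proposition \ref{openess} then gives a full neighbourhood of $\rho_0$ in $\mathrm{Hom}(\Gamma, \Aff)$ consisting of geometric representations, and taking a non-abelian perturbation with linear part still $\alpha$ produces a class in $\mathcal{G}$. In the totally real case I would additionally rule out $\mathcal{N}$ being the totally real locus by intersecting this Ehresmann-Thurston neighbourhood with the real slice $\{(\alpha, \tau) : \tau \in \mathrm{Z}^1_{\alpha}(\Gamma, \mathbb{R})\}$, which passes through $\rho_0$ and has positive real dimension because $\alpha$ is real; a small nonzero $\tau \in \mathrm{Z}^1_{\alpha}(\Gamma, \mathbb{R})$ yields a non-abelian totally real geometric representation whose class sits inside the totally real locus, so that locus meets $\mathcal{G}$ and hence cannot equal $\mathcal{N}$.

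The main obstacle is this last step, i.e.\ producing a \emph{totally real} non-abelian geometric perturbation of the abelian model $\rho_0$ rather than just a complex one. Once this real-dimension check is in place the rest is bookkeeping: the orbit data of Proposition \ref{orbit1} together with the closure result Theorem \ref{image} force $\mathcal{N}$ to be empty in every case, proving that every strictly affine representation is geometric.
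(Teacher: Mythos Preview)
Your argument is correct and follows the paper's proof essentially verbatim: the same reduction to closed $\Stab(\alpha)$-invariant subsets of $\mathbf{P}(\Ht)$ via Theorem \ref{image} and Proposition \ref{orbit1}, and the same Ehresmann--Thurston perturbation of the abelian model from Proposition \ref{linearr} to rule out the remaining invariant sets, including the totally real one. The only cosmetic difference is that you spell out the real-slice perturbation explicitly, whereas the paper simply asserts that the open neighbourhood of $\rho_0$ must contain a non-abelian totally real representation; both versions share the same (harmless) omission of the extra half-plane orbits in the $g=2$ case of Proposition \ref{orbit1}.
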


\subsection{Euclidean representations.}

If almost all cases of Euclidean representations can be dealt with using the $\Stab(\alpha) < \Mod$, few exceptional cases need a specific treatment. We will make the distinction between the case where $\mathrm{Im}(\alpha)$ is $\langle \exp(\frac{2i\pi}{n}) \rangle$ for $n= 1,2,3,4,6$(\textbf{exceptional cases}) and the other cases (\textbf{generic cases}). 

\subsubsection{Generic case.}

In the case where $\alpha$ is generic, we have seen that (Theorem \ref{image}) $\Stab(\alpha) < \Mod$ acts on $\mathbf{P}(\Ht)$ through a subgroup dense in $\mathrm{PU}(g-1,g-1)$. Since $\mathrm{PU}(g-1,g-1)$ acts transitively on the set of representations of positive volume (thought of as as subset of $\mathbf{P}(\Ht)$),  any representation of positive volume whose linear part is generic is realised by a branched affine structure.

\subsubsection{Explicit realisation of the exceptional cases.}

When $\mathrm{Im}(\alpha)$ is generated by $-1$, $i$, $\omega_3 = \exp(\frac{2i\pi}{3})$ or $\omega_6 = \exp(\frac{i\pi}{3})$, the image of $\mathrm{Stab}(\alpha)$ is discrete in $\mathrm{PGL}(\Ht)$ and the arguments used to deal with every other cases cannot be applied to determine the elements in $\Ht$ that can be realised by a geometric structure. We first prove a lemma giving a normal form for representations whose linear part is finite.

\begin{lemma}
\label{normalform}
Let $\rho : \Gamma \longrightarrow \Aff$ be a representation such that $\mathrm{Im}(\alpha)$ is finite of order $n$, where $\alpha$ is the linear part of $\rho$. Then there exists a generating set $\{A_1, B_1, \cdots, A_g, B_g \}$ of $\Gamma = \pi_1 \Sigma$ with $\prod_{i=1}^g{[A_i,B_i]}=1$ such that \begin{itemize}

\item $\{A_1, B_1, \cdots, A_g, B_g \}$ projects to a symplectic basis of $\mathrm{H}_1(\Sigma, \Z)$;

\item $\rho(A_1) = z \mapsto e^{\frac{2i\pi}{n}}z + k$;

\item $\rho(B_1) = z \mapsto z$;

\item for all $i > 1$, $\rho(A_i)$ and $\rho(B_i)$ are translations.

\end{itemize}

\end{lemma}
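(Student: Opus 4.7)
The plan is to normalize $\alpha$ via a symplectic change of basis of $\mathrm{H}_1(\Sigma, \Z)$, lift this change to $\pi_1 \Sigma$ via Dehn--Nielsen--Baer, and then read off $\rho(B_1) = \mathrm{Id}$ directly from the surface relation.

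First, since $\mathrm{Im}(\alpha) \simeq \Z/n\Z$, the linear part $\alpha$ descends to a surjective homomorphism $\mathrm{H}_1(\Sigma, \Z) \twoheadrightarrow \Z/n\Z$, which via the symplectic form on $\mathrm{H}_1(\Sigma, \Z/n\Z)$ corresponds to a primitive element of $(\Z/n\Z)^{2g}$. Combining the classical transitivity of $\mathrm{Sp}(2g, \Z/n\Z)$ on primitive vectors with the surjection $\Sp \twoheadrightarrow \mathrm{Sp}(2g, \Z/n\Z)$, I can find a symplectic basis $(a_1, b_1, \ldots, a_g, b_g)$ of $\mathrm{H}_1(\Sigma, \Z)$ satisfying $\alpha(a_1) = e^{2i\pi/n}$ and $\alpha(b_1) = \alpha(a_i) = \alpha(b_i) = 1$ for $i > 1$. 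By Dehn--Nielsen--Baer, this symplectic change of basis is induced by an automorphism of $\Gamma$ preserving the surface relation; applying it to a standard generating set yields generators $\{A_1, B_1, \ldots, A_g, B_g\}$ satisfying $\prod [A_i, B_i] = 1$ and projecting to the chosen homology basis.

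It remains to see $\rho(B_1) = \mathrm{Id}$. Write $\rho(A_1)(z) = e^{2i\pi/n} z + k$ for some $k \in \C$, $\rho(B_1)(z) = z + b$, and note that $\rho(A_i), \rho(B_i)$ are translations for $i > 1$. Translations commute, so $[\rho(A_i), \rho(B_i)] = \mathrm{Id}$ for $i > 1$; a direct calculation gives $[\rho(A_1), \rho(B_1)](z) = z + (e^{2i\pi/n} - 1) b$. The surface relation then forces $(e^{2i\pi/n} - 1) b = 0$, whence $b = 0$ because $n > 1$.

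The main obstacle is justifying the transitivity claim in the first step: transitivity of $\mathrm{Sp}(2g, \Z/n\Z)$ on primitive vectors of $(\Z/n\Z)^{2g}$ reduces via the Chinese remainder theorem to prime powers $n = p^k$, where it follows from elementary symplectic row and column operations. The remaining steps, in particular the final commutator computation and the appeal to Dehn--Nielsen--Baer, are routine.
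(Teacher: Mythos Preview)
Your proof is correct and follows essentially the same route as the paper's: normalize $\alpha$ on a symplectic basis of $\mathrm{H}_1(\Sigma,\Z)$, lift to a standard generating set of $\pi_1\Sigma$, use that the translations $\rho(A_i),\rho(B_i)$ for $i>1$ commute to reduce the surface relation to $[\rho(A_1),\rho(B_1)]=\mathrm{Id}$, and conclude that the translation $\rho(B_1)$ commuting with the nontrivial rotation $\rho(A_1)$ must be the identity. You supply more detail on the symplectic normalization (via $\mathrm{Sp}(2g,\Z/n\Z)$ and strong approximation) and make the commutator computation explicit, but the argument is the same.
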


\begin{proof}

The proof is elementary. Since $\alpha$ has a finite image, we can find a symplectic basis $a_1, b_1, \cdots, a_g, b_g$ of $\mathrm{H}_1(\Sigma, \Z)$ for which $\alpha(a_1) = e^{\frac{2i\pi}{n}}$ and $\alpha(b_1) = \alpha(a_2) = \cdots = \alpha(b_g) = 1$.
\noindent The basis  $a_1, b_1, \cdots, a_g, b_g$ can be lifted to $\{A_1, B_1, \cdots, A_g, B_g \}$  a generating set of $\pi_1 \Sigma$ such that $[A_1, B_1]^{-1} = \prod_{i=2}^g{[A_g,B_g]}$. Since for $i>2$, $\rho(A_i)$ is a translation, $\rho(\prod_{i=2}^g{[A_g,B_g]}) = \prod_{i=2}^g{[\rho(A_g),\rho(B_g)]}$ must be trivial, hence $\rho(A_1)$ and $\rho(B_1)$ must commute. But since $\rho(A_1)$ is a non-trivial rotation and  $\rho(B_1)$ is a translation, $\rho(B_1)$ must be trivial. Which proves the lemma.

\end{proof}

We now explain the strategy we are going to follow. For the remainder of the section, $\rho$ is a Euclidean representation whose linear part has finite image and positive volume. From the normal form of the representation we deduce an Abelian representation made of translation on a surface $\Sigma'$ of genus $g-1$ which factors through $p : \mathrm{H}_1(\Sigma', \Z) \longrightarrow \C$. If we can find a translation structure on $\Sigma'$ whose period morphism is $p$, we can geometrise $\rho$: it is enough to apply the surgery presented in Section \ref{adding} (the 'adding a handle' surgery) to the translation structure along two segments forming an angle of $\frac{2i\pi}{n}$ to get a surface whose holonomy is (up to conjugation) $\rho$.  

\vspace{2mm}

According to Lemma \ref{normalform}, $\rho$ splits into two representations of the fundamental group of a surface of genus $g-1$ and of a torus. The image of the representation induced on the torus has volume zero because its image is cyclic. The volume of $\rho$ is therefore equal to the volume of $p \in \mathrm{H}^1(\Sigma', \C)$ induced on $\Sigma'$ of genus $g-1$, hence the volume of $p$ is positive. If $p$ can be realised by a translation surface, we can realise $\rho$ as we previously explained. Recall Haupt's theorem that characterises periods which can be realised by a translation surface:

\begin{thm}[Haupt, \cite{Haupt}]
Let $\omega \in \mathrm{H}^1(\Sigma, \C)$. It can be realised as the period morphism of a translation surface if and only if the two following conditions hold:

\begin{enumerate}

\item the volume of $\omega$ is positive;

\item if the image of $\omega$ is a lattice $\Lambda$ in $\C$, then $\mathrm{vol}(\omega) > \mathrm{vol}(\C / \Lambda)$.

\end{enumerate}

\end{thm}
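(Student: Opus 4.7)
The plan is to execute the strategy of Kapovich alluded to in the introduction: trade Haupt's original topological argument for a combination of the Ehresmann--Weil--Thurston principle with Ratner's theorem applied to the linear $\mathrm{Sp}(2g,\Z)$-action on $\mathrm{H}^1(\Sigma,\C)\simeq\R^{2g}\oplus\R^{2g}$. Denote by $\mathcal{H}\subset\mathrm{H}^1(\Sigma,\C)$ the set of period morphisms realised by some translation surface. Exactly as in Proposition \ref{openess}, $\mathcal{H}$ is open, and it is obviously $\mathrm{Sp}(2g,\Z)$-invariant since the mapping class group acts on translation surfaces by precomposition of the developing map. Hence $\mathcal{H}$ is a union of $\mathrm{Sp}(2g,\Z)$-orbit closures, and the strategy is to (i) check the necessity of (1) and (2), (ii) classify the orbit closures using Ratner's theorem, and (iii) exhibit one realisable element in each orbit closure allowed by (1) and (2).

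For necessity, assume $\omega$ is the period of a translation surface coming from a holomorphic $1$-form $\alpha$. Then $\int_\Sigma\alpha\wedge\overline{\alpha}$ computes simultaneously the area of the flat structure and the cup product $\Re\omega\cdot\Im\omega=\mathrm{vol}(\omega)$, giving (1). If moreover $\mathrm{Im}(\omega)\subset\Lambda$ for a lattice $\Lambda\subset\C$, then the developing map descends to a surjective holomorphic map $f:\Sigma\to\C/\Lambda$ whose degree equals $d=\mathrm{vol}(\omega)/\mathrm{vol}(\C/\Lambda)\in\Z_{>0}$. The Riemann--Hurwitz formula reads $2g-2=R\geq 0$; if $d=1$ the map would be an unramified cover, forcing $g=1$, contrary to the assumption $g\geq 2$. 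Hence $d\geq 2$, which gives (2).

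For the orbit classification, choose a symplectic basis of $\mathrm{H}_1(\Sigma,\Z)$ and identify $\mathrm{H}^1(\Sigma,\C)$ with $\R^{2g}\oplus\R^{2g}$, on which $\mathrm{Sp}(2g,\Z)$ acts diagonally, the volume being the symplectic pairing between the two factors. The situation is simpler than in Section \ref{modlinear}, since there is no quotient by a lattice in the target: Ratner's theorem applies to the pair $(\mathrm{Sp}(2g,\R),\mathrm{Sp}(2g,\Z))$ acting on the homogeneous space $\R^{2g}\oplus\R^{2g}\setminus\{(0,0)\}$. Assuming $\mathrm{vol}(\omega)>0$, the vectors $\Re\omega,\Im\omega$ are $\R$-linearly independent, and the analysis splits into two cases: either $\mathrm{Im}(\omega)$ spans a lattice $\Lambda\subset\C$, in which case the orbit is the discrete set of periods $\omega'$ with $\mathrm{Im}(\omega')=\Lambda$ and $\mathrm{vol}(\omega')=\mathrm{vol}(\omega)$; or $\mathrm{Im}(\omega)$ is dense in $\C$, in which case Ratner gives that the orbit closure is exactly the level set $\{\mathrm{vol}=\mathrm{vol}(\omega)\}$ (since the symplectic pairing is the only $\mathrm{Sp}(2g,\R)$-invariant on pairs of $\R^{2g}$-vectors).

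Realisations are then easy to produce. Any translation surface of the correct area -- e.g.\ obtained by side-identification of a generic $4g$-gon and suitably rescaled -- provides one point in the dense-image orbit closure; by openness of $\mathcal{H}$ and density of the orbit, the entire level set $\{\mathrm{vol}=c>0\}$ is then contained in $\mathcal{H}$. For the lattice case with $d\geq 2$, one constructs a degree-$d$ branched cover $\Sigma\to\C/\Lambda$ with ramification $R=2g-2$ (which is consistent for every $d\geq 2$); the pull-back of $dz$ is a holomorphic $1$-form on $\Sigma$ with periods spanning $\Lambda$ and total volume $d\cdot\mathrm{vol}(\C/\Lambda)=\mathrm{vol}(\omega)$, and $\mathrm{Sp}(2g,\Z)$-invariance sweeps out every other element of the orbit. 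The main obstacle is the careful execution of the Ratner step: one must identify the stabiliser subgroups correctly and rule out spurious intermediate orbit closures -- most delicately those for which $\mathrm{Im}(\omega)$ would lie in a rank-$1$ real subgroup of $\C$, a situation thankfully excluded by $\mathrm{vol}(\omega)>0$, since then $\Re\omega$ and $\Im\omega$ would be $\R$-proportional.
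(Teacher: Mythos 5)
You should first be aware that the paper contains no proof of this statement: Haupt's theorem is quoted twice as an external input, with a citation to \cite{Haupt}, and the dynamical proof via Ratner's theorem is only \emph{mentioned} in the introduction as an unpublished note of Kapovich. So there is nothing in the paper to compare your argument against line by line; what you have written is a reconstruction of the strategy the author alludes to (and imitates in Sections 4--6 for the affine character variety, which is where the actual Ratner-type work of the paper lives).

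Taken on its own terms, your necessity direction is complete and correct: positivity of $\Re\omega\cdot\Im\omega$ is the area, and the Riemann--Hurwitz argument forcing the degree of $\Sigma\to\C/\Lambda$ to be at least $2$ is exactly the right reason for condition (2). The sufficiency direction, however, has a genuine gap, and you have located it yourself without filling it: the classification of $\mathrm{Sp}(2g,\Z)$-orbit closures on a level set $\{\mathrm{vol}=c\}$, $c>0$, is the entire content of the dynamical proof, and ``one must identify the stabiliser subgroups correctly and rule out spurious intermediate orbit closures'' is a statement of the problem, not a solution. Concretely, three things are missing. First, $\R^{2g}\oplus\R^{2g}\setminus\{0\}$ is not a homogeneous space for $\mathrm{Sp}(2g,\R)$; you must restrict to a single level set $\{\vec x\wedge\vec y=c\}\simeq G/U$ with $U\simeq\mathrm{Sp}(2g-2,\R)$, pass by duality to the $U$-action on $\Gamma\backslash G$, and check that $U$ is generated by unipotents so that Ratner applies (compare the paper's Theorem \ref{ratner} and the two lemmas following it, where exactly this bookkeeping is done for $\Sr\ltimes\R^{2g}$). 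Second, the elimination of intermediate closed subgroups $U\subset H\subset G$ with $H\cap\Gamma$ a lattice is nontrivial and is most delicate precisely in genus $2$, where $U\simeq\mathrm{SL}(2,\R)$ has many overgroups; the dichotomy ``discrete orbit or dense in the level set'' is the conclusion of that analysis, not its starting point. Third, in the lattice case you use without proof that all surjections $\mathrm{H}_1(\Sigma,\Z)\to\Lambda$ of fixed volume form a single $\mathrm{Sp}(2g,\Z)$-orbit, and that the branched cover can be chosen so that its periods generate all of $\Lambda$ rather than a finite-index sublattice; both are true but need an argument. A small additional slip: to conclude that the whole level set lies in the open realisable set $\mathcal H$, you should argue that the orbit of each \emph{target} period accumulates on the realised model (so that $\Gamma$-invariance of $\mathcal H$ applies), rather than that the orbit of the model is dense.
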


Since $p$ has positive volume $\rho$ can be realised unless it has discrete image in $\C$ and violates the second condition of Haupt's theorem. In the latter case, we can suppose that its image is $\Z \oplus i \Z$ up to an affine renormalisation. 
Our strategy at this point is to put $\rho$ in a form which will allow us to realise it starting from the flat torus $\C / \Z \oplus i\Z $ and performing successive 'adding a handle surgeries'. 
\noindent According to \cite[Proposition 2.7]{Deroin}, it must have volume $1$ and one can find a symplectic basis of $\Sigma'$, call it $a'_2, b'_2, \cdots, a'_g, b'_g$ such that $p(a'_g) = 1$, $p(b'_g) = 1$ and $p(a'_i) = p(b'_i) = 0$ for $1< i < g$. We deduce from this remark that there exists a presentation of $\Gamma = \{ A'_1, B'_1, \cdots, A'_g, B'_g  \ | \ \prod_{i=1}^g{[A'_g,B'_g]}\}$ such that 
:

\begin{itemize}

\item $\rho(A'_1) = z \mapsto \exp(\frac{2i\pi}{n}) z$;

\item $\rho(B'_1) = z \mapsto z$;

\item $\rho(A'_i) = \rho(B'_i) = z \mapsto z$ for $1 < i < g$;

\item $\rho(A'_g) = z \mapsto z + 1$;

\item $\rho(B'_g) = z \mapsto z + i$.

\end{itemize}

By applying appropriate Dehn twists in the subsurface of genus $g-1$ with one boundary component whose fundamental group is generated by $A'_1, B'_1, \cdots, A'_{g-1}, B'_{g-1}$ we easily deduce a presentation $\Gamma = \{ A''_1, B''_1, \cdots, A''_g, B''_g  \ | \ \prod_{i=1}^g{[A''_g,B''_g]}\}$ such that:

\begin{itemize}

\item $\rho(A'_i) = z \mapsto \exp(\frac{2i\pi}{n}) z$ for $1 \leq i < g$;

\item $\rho(B'_i) = z \mapsto z$ for $1 \leq i < g$;

\item $\rho(A'_g) = z \mapsto z + 1$;

\item $\rho(B'_g) = z \mapsto z + i$.

\end{itemize}

Such a representation is easily realised by starting from the torus $\C / \Z \oplus i \Z$ and realising $g-1$ successive 'adding a handle' surgeries.

\bibliographystyle{alpha}
\bibliography{biblio}

\end{document}